\newtheorem{theorem}{Theorem}
\newtheorem{lemma}{Lemma}[section]
\newtheorem{corollary}[lemma]{Corollary}
\newtheorem{construction}[lemma]{Construction}
\newtheorem{hypotheses}[lemma]{Hypothesis}
\newtheorem{definition}{Definition}[section]
\newtheorem{problem}{Problem}
\newtheorem{notation}{Notation}[section]
\newcommand{\Aut}{{\rm{Aut}}}
\newcommand{\Cay}{{\rm{Cay}}}
\def\ov{\overline}
\def\ovGa{\ov{\Ga}}
\def\ovG{\ov{G}}
\def\lcm{{\rm lcm}}
\def\Ga{\Gamma}
\def\la{\langle}
\def\ra{\rangle}
\def\ZZ{\mathbb{Z}}
\newcommand{\OG}{\mathcal{OG}}
\title[Oriented valency four graphs with cyclic normal quotients]
{Finite edge-transitive oriented graphs of valency four with cyclic normal quotients}
\author[J. A. Al-bar, A. N. Al-kenani,
N. M. Muthana, and 
C. E. Praeger]{Jehan A. Al-bar, Ahmad N. Al-kenani,\\ 
Najat Mohammad Muthana, and 
Cheryl E. Praeger}
\address[All authors]{King Abdulaziz University\\
Jeddah\\
Saudi Arabia} 
\address[Cheryl E. Praeger]
{Also affiliated with: Centre for the Mathematics of Symmetry and Computation\\
School  of Mathematics and Statistics M019\\
The University of Western Australia\\
35 Stirling Highway\\
Crawley, WA 6009\\
Australia}
\thanks{This project was funded by the Deanship of Scientific Research (DSR), King Abdulaziz University, Jeddah, 
under grant no. HiCi/H1433/363-1. The authors, therefore, acknowledge with thanks DSR technical and financial support.
}
\email[Jehan A. Al-bar]{jalbar@kau.edu.sa; jaal[underscore]bar@hotmail.com}
\email[Ahmad N. Al-kenani]{analkenani@kau.edu.sa; aalkenani10@hotmail.com}
\email[Najat M. Muthana]{nmuthana@kau.edu.sa;} 
\email[Najat M. Muthana (second email)]{najat[underscore]muthana@hotmail.com}
\email[Cheryl E. Praeger]{cheryl.praeger@uwa.edu.au}
\keywords{edge-transitive graphs, oriented graphs, cyclic quotient graph, transitive group}
\begin{document}

\begin{abstract}
We study finite four-valent graphs $\Ga$ admitting an edge-transitive group $G$ of automorphisms such that 
$G$ determines and preserves an edge-orientation on $\Ga$, and such that at least one $G$-normal quotient is a cycle 
(a quotient modulo the orbits of a normal subgroup of $G$). We show on the one hand that the number of distinct 
cyclic $G$-normal quotients can be unboundedly large. On the other hand existence of independent cyclic 
$G$-normal quotients (that is, they are not extendable to a common cyclic $G$-normal quotient) places severe restrictions 
on the graph $\Ga$ and we classify all examples. We show there are five infinite families of such pairs $(\Ga,G)$, 
and in particular that all such graphs have at least one normal quotient which is an unoriented cycle.  
We compare this new approach with existing treatments for the sub-class of weak metacirculant graphs with 
these properties, finding that only two infinite families of examples occur in common from both analyses.
Several open problems are posed.
\end{abstract}

\maketitle

\section{Introduction}\label{sec:intro}

Finite edge-transitive oriented graphs of valency four have been studied intensively because 
of their links to maps on Riemann surfaces \cite{Mar1, MN}. They are simple undirected graphs 
of valency $4$ which admit an orientation of their edges determined and preserved by the action 
of a vertex-transitive and edge-transitive automorphism group. The work of Maru\v{s}i\v{c} 
(summarised in \cite{Mar1}, but see also \cite{Mar2,MP,MW})  demonstrated the importance of a 
certain family of cycles, occurring as subgraphs, for 
understanding the internal structure of these graphs. 
This approach has been exploited recently by Maru\v{s}i\v{c}  and \v{S}parl~\cite{MS} 
for the sub-family of weak metacirculants, to give a classification scheme for this sub-family. 
On the other hand, pursuing a different approach, recent work of the authors in 
\cite{janc1} suggests that cycles occurring as normal quotients play a special role.  
In this paper we study these graphs which have a cycle (oriented or unoriented) as a normal quotient,
and classify the graphs with at least two `independent'
cyclic normal quotients (as defined in Definition~\ref{def:indept}).   
We compare this classification with the analysis in~\cite{MS} for weak metacirculants in
Subsection~\ref{sub:wm}.  

\medskip\noindent
{\bf Normal quotients and basic graphs of cycle type:}\quad 
Let $\OG(4)$ denote the family of all pairs $(\Ga,G)$, 
where $\Ga$ is a finite simple connected undirected graph of valency $4$, and 
$G\leq\Aut(\Ga)$ is a vertex-transitive and edge-transitive group of automorphisms
with a specified $G$-orbit $\Delta$ on ordered vertex-pairs consisting of one ordered 
pair for each edge. In the literature a $G$-action with these properties is said to 
be \emph{$\frac{1}{2}$-transitive}.
An edge orientation is defined as follows: orient each edge $\{x,y\}$ of $\Ga$ from $x$ to $y$
if $(x,y)\in\Delta$. Then $\Ga$ is said to be \emph{$G$-oriented}.
Our notation suppresses the orbit $\Delta$, and we note that the edge-orientation is
determined by $G$ up to possibly replacing $\Delta$ by $\{(x,y)\mid (y,x)\in\Delta\}$
(which corresponds to reversing the orientation of each edge).

For $(\Ga,G)\in\OG(4)$ with vertex set $X$, and a normal subgroup
$N$ of $G$, the \emph{normal quotient} $\Ga_N$ of $(\Ga,G)$ has as
vertices the $N$-orbits in $X$, and a pair $\{B,C\}$ 
of distinct $N$-orbits forms an edge of $\Ga_N$ if and only if  
there is at least one edge $\{x,y\}$ of $\Ga$ with $x\in B$ and 
$y\in C$. 
The quotient $\Ga_N$ is 
\emph{proper} if $N\ne1$ (so that $\Ga_N$ is strictly smaller than $\Ga$). 
There is a constant $\ell$, independent of the 
adjacent pair $B, C$, such that each vertex of $B$ is joined by an 
edge in $\Ga$ to exactly $\ell$ vertices of $C$, \cite[Proposition 3.1]{janc1}. If $\ell=1$ 
then the kernel of the $G$-action on $\Ga_N$ is semiregular and hence equal to $N$,
and we say that $(\Ga,G)$  is  a \emph{normal cover} of $(\Ga_N, G/N)$. 

If all edges from $B$ to $C$ have the same $G$-orientation, then $\Ga_N$ inherits 
a $G$-invariant orientation from $\Ga$. For any normal subgroup $N$, by 
\cite[Theorem 1.1]{janc1}, either $(\Ga,G)$ is a normal cover of $(\Ga_N, G/N)$ and
$(\Ga_N, G/N)\in\OG(4)$, or the quotient $\Ga_N$ is degenerate:
consisting of a single vertex (if $N$ is transitive), or a single edge
(if the $N$-orbits form the bipartition of a bipartite graph $\Ga$), or 
$\Ga_N$ is a cycle possibly, but not necessarily, inheriting a $G$-orientation of its edges.  
Thus, apart from these degeneracies, the family $\OG(4)$ is closed under 
the normal quotient operation, and much can be learned from studying its `basic members',
namely those pairs $(\Ga,G)\in\OG(4)$ for which all proper normal quotients
are degenerate. A study of these basic pairs was initiated in \cite{janc1}. 

In this paper we focus on pairs $(\Ga,G)\in\OG(4)$
with at least one cyclic normal quotient.
The alternative approach in~\cite{MS} mentioned above also uses certain
graph quotients to subdivide the weak metacirculants in $\OG(4)$ into four
broad classes. Our discussion in  Subsection~\ref{sub:wm} shows that the
graph quotients corresponding to two of these four classes  can never 
arise as normal quotients in our sense, and only those for `Class I' of~\cite{MS}
can be cyclic normal quotients of pairs in $\OG(4)$. Moreover,
because of the  major focus in~\cite{MS} on \emph{$\frac{1}{2}$-arc-transitive} 
graphs $\Ga$ (namely those for which $(\Ga,\Aut(\Ga))\in\OG(4)$), 
fewer examples are found than in our situation. We make 
more detailed remarks in Subsection~\ref{sub:wm} (in particular giving definitions
of these concepts).  

\medskip\noindent
{\bf Questions and results:}\quad 
There are many natural questions that arise: how many different 
cyclic  normal quotients can a pair $(\Ga, G)\in\OG(4)$ possess? In particular, 
how many if $(\Ga,G)$ is basic? Can a (basic) pair
$(\Ga,G)$ have  cyclic normal quotients, some of which inherit an edge orientation
while others do not, and if so 
are there restrictions on the numbers of them?
We begin by gathering a few observations concerning these questions 
in Theorem~\ref{thm1}.  
We denote by $C_r$ a cycle of length $r$; and we say that a cyclic normal 
quotient is \emph{$G$-oriented} or \emph{$G$-unoriented} if it
does, or does not, inherit a $G$-invariant edge orientation, respectively.

\begin{theorem}\label{thm1}
Given a positive integer $n$, there exists

\begin{enumerate}
 \item[(a)]   a \textbf{basic} pair $(\Ga, G)\in\OG(4)$
with \textbf{exactly} $n$  pairwise non-isomorphic normal quotients, all $G$-oriented cycles; 
\item[(b)]  a \textbf{basic} pair $(\Ga, G)\in\OG(4)$
with \textbf{at least} $n$ normal quotients, all $G$-oriented cycles of pairwise coprime lengths;
\item[(c)]   a pair $(\Ga, G)\in\OG(4)$
with \textbf{at least} $2n$ normal quotients $\Ga_{N_i}\cong C_{r_i}$ and $\Ga_{M_i}\cong C_{s_i}$, 
for $1\leq i\leq n$, such that $\gcd(r_i,r_j)=\gcd(s_i,s_j)= 1$ for $i\ne j$, and such that each 
$\Ga_{N_i}$ is $G$-oriented while each $\Ga_{M_i}$ is $G$-unoriented. 
\end{enumerate}
\end{theorem}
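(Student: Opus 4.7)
My plan is to prove each part by explicit construction, parameterized by $n$.

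For (a), I will construct $(\Gamma_n, G_n) \in \OG(4)$ as a Cayley graph on a group of the form $G_n = \ZZ_{m_n} \rtimes H$, where $m_n = p_1 p_2 \cdots p_n$ for distinct primes $p_i$ and $H$ is a small subgroup of $\Aut(\ZZ_{m_n})$ (of order coprime to $m_n$) whose role is to permute a connection set $S$ of size $4$, thereby making $G_n$ edge-transitive while preserving a chosen $G_n$-orbit $\Delta$ on ordered edges. The $G_n$-invariant subgroups of $\ZZ_{m_n}$ are exactly the $2^n$ subgroups of $\ZZ_{m_n}$ (since $\ZZ_{m_n}$ is cyclic of squarefree order, $H$ permutes its primary components trivially), and each proper such subgroup $N$ gives a normal quotient $\Gamma_n / N$ that is a $G_n$-oriented cycle. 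With a careful choice of $S$, only the $n$ maximal such $N$ (those of index $p_i$) yield distinct cyclic quotients $C_{p_i}$; the other subgroups yield quotients isomorphic to one of these, or trivial ones. To conclude basic-ness, I must check that no non-cyclic non-degenerate normal quotient arises, which reduces to an inventory of the normal subgroups of $G_n$ and their orbit structure on $\Gamma_n$.

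For (b), the same construction suffices, since the $n$ cycles $C_{p_i}$ produced in (a) automatically have pairwise coprime lengths.

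For (c), the strategy is to enlarge the construction to produce both $G$-oriented and $G$-\emph{un}oriented cyclic normal quotients. A cyclic normal quotient $\Gamma_M \cong C_s$ is $G$-oriented iff $G/M$ acts on $C_s$ via the rotation subgroup $\ZZ_s$ of $\Aut(C_s)$, and $G$-unoriented iff $G/M$ also contains reflections. The plan is to form a fibered (or suitable direct) product of two $\OG(4)$-pairs: one built as in (a) from a cyclic group $\ZZ_m$ with a non-inverting extension $H$ (yielding $n$ $G$-oriented cyclic quotients over primes $p_i$), and another built with a dihedral extension acting with inversion (yielding $n$ $G$-unoriented cyclic quotients over coprime primes $q_i$). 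The required coprimalities $\gcd(r_i, r_j) = \gcd(s_i, s_j) = 1$ for $i \ne j$ follow from the primes being chosen distinct, and the two types of cycles arise from the two factors respectively.

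The main obstacle throughout is verifying the normal subgroup structure carefully enough to guarantee (i) connectedness of $\Gamma$, i.e.\ $\la S \ra = G$, faithfulness of the $G$-action, and existence of the half-arc-transitive orbit $\Delta$; (ii) in (a), absence of ``hidden'' normal subgroups producing unwanted non-cyclic non-degenerate normal quotients; and (iii) in (c), that the two factors of the fibered product really induce the intended orientation behaviour on their respective cyclic quotients. These verifications are standard Cayley-graph calculations, but must be executed carefully for each infinite family.
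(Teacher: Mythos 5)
Your part (a) contains a genuine gap. With $m_n=p_1\cdots p_n$ squarefree, the cyclic normal subgroup $\ZZ_{m_n}$ has $2^n-2$ proper nontrivial subgroups, all normal in $G_n$, and the quotient modulo a subgroup $N$ of index $d$ has exactly $|V\Ga|/|N|$ vertices (these $N$ are semiregular). So the quotient modulo a subgroup of index $p_ip_j$ has strictly more vertices than $C_{p_i}$ and cannot be ``isomorphic to one of these'': either it is a cycle $C_{p_ip_j}$ (and then you have far more than $n$ pairwise non-isomorphic cycle quotients, violating \textbf{exactly} $n$), or it is not a cycle (and then $(\Ga_n,G_n)$ is not basic). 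The only way to make the divisor lattice produce exactly $n$ cycles is to take a prime power: the paper uses $r=p^n$ in Construction~\ref{ex:najat}, so the nontrivial divisors form a chain $p,p^2,\dots,p^n$ and the quotients are precisely $C_{p},\dots,C_{p^n}$ (note these are \emph{not} coprime, which is why the paper needs a separate choice $r=\prod p_i$ for part (b)). A second, related problem is basicness: if $\Ga_n=\Cay(\ZZ_m,S)$ with $S=\{s_1,s_2,-s_1,-s_2\}$ and $G_n=\ZZ_m\rtimes H$ with $H$ small, then the quotient modulo $N\le\ZZ_m$ is $\Cay(\ZZ_m/N,\,S\bmod N)$, which is a cycle only when $s_1\equiv s_2 \pmod N$; this cannot hold for every proper nontrivial $N$ simultaneously unless $s_1=s_2$, so some proper normal quotient is $4$-valent and the pair is not basic. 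The paper sidesteps this by taking $\Ga=C_r[2.K_1]$ with the much larger group $G=Z_2\wr Z_r$: the base group $Z_2^r$ is normal and every nontrivial normal subgroup of $G$ has orbits that are unions of base-group orbits, which is what collapses all proper normal quotients to cycles $C_c$ with $c\mid r$ (Lemma~\ref{lem:najat}). Your ``small $H$ of order coprime to $m_n$'' cannot reproduce this.

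Part (b) is fine in outline once you have a construction all of whose proper normal quotients are the cycles $C_d$ for $d\mid\prod p_i$ (the paper uses the same wreath-product construction with $r=\prod p_i$ and only needs the $n$ maximal quotients $C_{p_i}$, so the extra non-coprime quotients do no harm there). For part (c) your idea is in the right spirit --- the paper's $\Ga(r,s)$ on $\ZZ_r\times\ZZ_s$ with $G(r,s)=\la\mu,\sigma\ra\times\la\nu\ra\cong D_{2r}\times Z_s$ is exactly a graph whose two coordinate projections are an unoriented cycle $C_r$ and an oriented cycle $C_s$ --- but ``fibered/direct product of two $\OG(4)$-pairs'' is not the right formalization: a product of two $4$-valent factors will not be $4$-valent. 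The factors have to be the $2$-valent cycles themselves (with dihedral, respectively cyclic, groups acting), glued via the edge rule $(i,j)\to(i\pm1,j+1)$ as in Construction~\ref{con1}; then the oriented quotients $C_{p_i}$ and unoriented quotients $C_{q_j}$ arise from the normal subgroups $N_{p_i}\times M$ and $N\times M_{q_i}$. As written, your proposal does not establish (a), and (c) needs the explicit product graph rather than a product of $\OG(4)$-pairs.
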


The pairs $(\Ga, G)$ used in the proof of Theorem\,\ref{thm1}\,(c) 
for $n>1$ are not basic and we do not know of any basic pairs with
these properties.

\begin{problem}\label{problem1}
{\rm
Decide if the number of unoriented cyclic normal quotients of a \emph{basic} pair $(\Ga,G)\in\OG(4)$
can be unboundedly large.
}
\end{problem}


The multitude of cyclic normal quotients in the examples examined to prove
Theorem~\ref{thm1} are all quotients of one or two
particular cyclic normal quotients.  For example, in the proof of Theorem~\ref{thm1}~(c) 
using $(\Ga,G)$ from Construction~\ref{con1}, the quotients 
$\Ga_{N_i}$ are all quotients of a single normal quotient 
$\Ga_{N}\cong C_{r}$ with $r$ divisible by all the $r_i$, and all the $\Ga_{M_j}$
are quotients of a single normal quotient 
$\Ga_{M}\cong C_{s}$ with $s$ divisible by all the $s_j$.

\begin{definition}\label{def:indept}{\rm
Two cyclic normal quotients $\Ga_M, \Ga_N$ of $(\Ga, G)\in\OG(4)$ are \emph{independent}
if the normal quotient $\Ga_K$ is not a cycle, where $K = \tilde N\cap\tilde M$, with 
$\tilde N, \tilde M$ the subgroups consisting of all the elements of $G$ which fix 
setwise each $N$-orbit, or each $M$-orbit, respectively. 
}
\end{definition}

If $\Ga_M, \Ga_N$ are cyclic normal quotients of $(\Ga, G)\in\OG(4)$ and the normal quotient 
$\Ga_K$ is a cycle, where $K = \tilde N\cap\tilde M$,  
then both $\Ga_N$ and $\Ga_M$ are isomorphic to $(G/K)$-normal quotients of 
$\Ga_K$, and hence both or neither of them are $G$-oriented, according as $\Ga_K$
is $G$-oriented or not. Thus if one of $\Ga_N, \Ga_M$ is $G$-oriented and the other is $G$-unoriented, then  
$\Ga_N, \Ga_M$ must be independent. The graphs used to prove Theorem~\ref{thm1}(a), (b) do not have independent cyclic
normal quotients, but those used to prove Theorem~\ref{thm1}(c) do have such quotients.
Our main result classifies all pairs $(\Ga,G)\in\OG(4)$ with independent cyclic normal quotients of given orders.


\begin{theorem}\label{thm3}
Let $(\Ga,G)\in\OG(4)$, let $x$ be a vertex, and suppose that $(\Ga,G)$ has independent
cyclic normal quotients $\Ga_N \cong C_r$ and $\Ga_M \cong C_s$, where $r\geq3, s\geq3$. 
Then $G_x\cong Z_2$, and the following hold: 
\begin{enumerate}
 \item[(a)] at least one of $\Ga_N, \Ga_M$ is $G$-unoriented, say $\Ga_N$ is $G$-unoriented;

\item[(b)] $(\Ga,G)\in\OG(4)$ is a normal cover of $(\ovGa,\ov G)\in\OG(4)$,
which has independent cyclic normal quotients $\ovGa_{\ov N}\cong C_r$ and $\ovGa_{\ov M}\cong C_s$
such that $\ov N\cap\ov M=1$;

\item[(c)] $\ovGa, \ov G$ are as in one of the lines of Table~$\ref{tbl1}$, 
and $\ovGa_{\ov M}$ (and $\Ga_M$) are $G$-oriented if and only if the entry in column $3$ is `yes'.
\end{enumerate}
\end{theorem}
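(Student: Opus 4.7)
The plan is to proceed in three stages that establish parts (b), then (a) together with the stabilizer claim, and finally (c).

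First, to obtain the reduction in (b), set $K=\tilde N\cap\tilde M$. Since $K$ lies in both $\tilde N$ and $\tilde M$, every $K$-orbit is contained in a single $N$-orbit and a single $M$-orbit. If $K$ were transitive on $V\Ga$ there would be only one $N$-orbit, forcing $r=1$; if $\Ga_K$ were a single edge, then $K$ would have at most two orbits, and the refinement property would then force $r,s\leq 2$; both cases contradict $r,s\ge3$. The independence hypothesis excludes $\Ga_K$ from being a cycle, so Theorem~1.1 of \cite{janc1} forces $(\Ga,G)$ to be a normal cover of $(\ovGa,\ovG):=(\Ga_K,G/K)\in\OG(4)$. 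Setting $\ov N=\tilde N/K$ and $\ov M=\tilde M/K$, one checks $\ov N\cap\ov M=1$, $\tilde{\ov N}=\ov N$, $\tilde{\ov M}=\ov M$, and that $\ovGa_{\ov N}\cong C_r$, $\ovGa_{\ov M}\cong C_s$ remain independent with the same orientation status as $\Ga_N,\Ga_M$. This gives (b); hereafter I work inside the reduced pair.

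To prove (a) and $\ovG_x\cong Z_2$, note first that if both $\ovGa_{\ov N}$ and $\ovGa_{\ov M}$ were $\ovG$-oriented, then $\ovG/\ov N\cong Z_r$ and $\ovG/\ov M\cong Z_s$, and the embedding $\ovG\hookrightarrow Z_r\times Z_s$ would make $\ovG$ abelian and hence regular on $V\ovGa$; but $\frac12$-arc-transitivity on a valency-$4$ graph requires $|\ovG_x|=2|\ovG_{(x,y)}|\ge2$, a contradiction, proving (a). Assume then that $\ovGa_{\ov N}$ is unoriented, so $\ovG/\ov N\cong D_{2r}$. The standard index counts $|\ovG_x|/|\ov N_x|=2$ and $|\ovG_x|/|\ov M_x|\in\{1,2\}$, combined with $\ov N_x\cap\ov M_x=1$, yield $|\ovG_x|=2$ at once when $\ovGa_{\ov M}$ is oriented, and $|\ovG_x|\in\{2,4\}$ otherwise. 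To rule out $|\ovG_x|=4$ in the both-unoriented case I observe that unorientedness forces the two out-neighbours of $x$ to lie in distinct $\ov N$-orbits (and likewise distinct $\ov M$-orbits), because otherwise the $G$-invariance of the arc orbit $\ov\Delta$ would produce a consistent orientation on the quotient. Inspecting the three $2{+}2$ partitions of $\ovGa(x)$ (by in-/out-arcs, by $\ov N$-orbits, and by $\ov M$-orbits) in the two possible configurations then shows that the nontrivial elements of $\ov N_x$ and $\ov M_x$ must each act trivially on $\ovGa(x)$; hence $\ov N_x\cup\ov M_x\subseteq\ovG_x^{[1]}$, and if $|\ovG_x|=4$ this forces $\ovG_x=\ovG_x^{[1]}$, contradicting transitivity of $\ovG_x$ on out-neighbours. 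Thus $\ovG_x\cong Z_2$.

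For the classification in (c), the embedding $\ovG\hookrightarrow D_{2r}\times X$ (with $X=Z_s$ or $D_{2s}$ according to the orientation of $\ovGa_{\ov M}$), surjectivity of both projections, and the requirement $|\ovG_x|=2$ reduce $\ovG$ to a subdirect product described by Goursat's lemma. Since the only common cyclic quotients of the two factors are $1$ and $Z_2$, $\ovG$ is constrained to a short explicit list; for each candidate the vertex stabilizer (a reflection in each factor fixing a common vertex) and the arc orbit $\ov\Delta$ (one of two $\ovG$-orbits on arcs) are pinned down, and $\ovGa$ is reconstructed as a coset graph. Running through the finite list of possibilities produces the five infinite families of Table~\ref{tbl1}, the `yes' in column~3 marking precisely those candidates whose second factor is $Z_s$. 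The principal technical obstacle at this stage is the book-keeping: for each candidate one must verify connectedness of $\ovGa$, that the resulting action is genuinely $\frac12$-arc-transitive rather than arc-transitive, and --- crucially --- that the two cyclic normal quotients really are independent and cannot be merged by enlarging $\ov N\cap\ov M$ beyond the trivial group.
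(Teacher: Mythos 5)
Your handling of parts (a), (b) and of the vertex stabiliser is sound. The reduction to $\ov N\cap\ov M=1$ via \cite[Theorem 1.1]{janc1} is exactly the paper's route (Lemma~\ref{lem:NM2}); your ``both oriented $\Rightarrow$ $\ov G$ embeds in $Z_r\times Z_s$, is abelian, hence regular'' argument for (a) is a valid alternative to the paper's Lemma~\ref{lem:NM}; and your local argument that the out-neighbours of $x$ lie in distinct $\ov N$-orbits (and $\ov M$-orbits), so that $\ov N_x,\ov M_x$ lie in the kernel of the action on the neighbourhood and $|\ov G_x|=2$, reaches the same conclusion that the paper obtains from semiregularity of the kernels (Lemma~\ref{lem:N}(a)) in Lemmas~\ref{lem:NM3} and~\ref{lem:NM4}.

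The genuine gap is in part (c), which is where the substance of the theorem lies. When $\ovGa_{\ov M}$ is $G$-unoriented, $\ov G$ is a subdirect subgroup of $D_{2r}\times D_{2s}$ with $\ov M\varphi=\ov G\varphi\cap(D_{2r}\times 1)$ and $\ov N\varphi=\ov G\varphi\cap(1\times D_{2s})$, and the Goursat datum is an isomorphism $D_{2r}/\pi_1(\ov M\varphi)\cong D_{2s}/\pi_2(\ov N\varphi)$. This common quotient need not be cyclic: it equals $Z_2\times Z_2$ for the family $(\Ga^+(r,s),H^+(r,s))$ of line 5 of Table~\ref{tbl1} (the case of Lemma~\ref{lem:NM4}(iii)), so your criterion ``the only common cyclic quotients of the factors are $1$ and $Z_2$'' would already discard one of the families you are supposed to find. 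More seriously, group theory of the embedding alone does not exclude a dihedral common quotient $D_{2t}$ with $t\geq3$, i.e.\ $\ov M\varphi=\la(a^t,1)\ra$, $\ov N\varphi=\la(1,b^t)\ra$ with $t$ large (both are admissible normal subgroups of the respective factors). Excluding this is the key step, and it is graph-theoretic, not Goursat-theoretic: in Lemma~\ref{lem:NM4} the paper uses semiregularity of $\ov N,\ov M$ to compute $|V\ovGa|=rs/t$ with $K=\ov M\times\ov N$ having $t$ vertex-orbits, shows an $\ov N$-orbit and an $\ov M$-orbit meet in at most one vertex, deduces that all four neighbours of $x$ lie in the single $K$-orbit $y^K$, and then connectivity of $\ovGa$ forces $K$ to have at most two orbits, whence $t\leq2$. (In the $\ov M$-oriented case the analogous bound comes from $(a^i,b)(a^{-i},b)=(1,b^2)\in\ov N\varphi$ in Lemma~\ref{lem:NM3}, or from normality of $\pi_1(\ov M\varphi)$ in $D_{2r}$; only there is your ``$1$ or $Z_2$'' claim correct.) Finally, even with the group list in hand, the identification of the graphs is not mere book-keeping: one must exhibit a regular subgroup, write $\ovGa$ as a Cayley graph and pin down the connection set, which is what produces the parity conditions in column 4 of Table~\ref{tbl1}, shows that the candidate of Lemma~\ref{lem:NM4}(ii) admits no connected example at all, and fixes the orientation; this is the content of Lemmas~\ref{lem:t1},~\ref{lem:t2},~\ref{lem:i},~\ref{lem:ii} and~\ref{lem:iv}, none of which is replaced by your sketch.
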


\begin{table}\begin{center}
\begin{tabular}{lllll}
$\ovGa$      & $\ov G$    & $\ovGa_{\ov M}$? & Conditions on $r$, $s$ & Reference \\ \hline
$\Ga(r,s)$   & $G(r,s)$   & yes         & at least one odd       & Con.~\ref{con1} \& Lems.~\ref{lem:con1},~\ref{lem:t1}\\
$\Ga^+(r,s)$ & $G^+(r,s)$ & yes         & both even              & Con.~\ref{con1} \& Lems.~\ref{lem:con1},~\ref{lem:t2}\\
$\Ga(r,s)$   & $H(r,s)$   & no           & $r$ odd, $s$ even      & Con.~\ref{con2c} \& Lems.~\ref{lem:con2c},~\ref{lem:iv}\\
$\Ga(s,r)$   & $H(s,r)$   & no           & $r$ even, $s$ odd      & Con.~\ref{con2c} \& Lems.~\ref{lem:con2c},~\ref{lem:iv}\\ 
$\Ga^+(r,s)$ & $H^+(r,s)$ & no         & both even              & Con.~\ref{con2c} \& Lems.~\ref{lem:con2c},~\ref{lem:ii}\\ 
$\Ga_2(r,s)$ & $G_2(r,s)$ & no           & both odd               & Con.~\ref{con2a} \& Lems.~\ref{lem:con2a},~\ref{lem:i}\\ \hline
              \end{tabular}
              \end{center}
\caption{Table for Theorem~\ref{thm3}(c)}\label{tbl1}
\end{table}

We give several constructions of families in $\OG(4)$ in Section~\ref{sec:ex}, and prove Theorem~\ref{thm1}.
Then we prove Theorem~\ref{thm3} in Section~\ref{sec:thm3}. 

\medskip\noindent
{\bf Remark}\quad 
(a)\quad Some of the pairs in Theorem~\ref{thm3}(c) have larger cyclic normal quotients, for example,
if $(\ovGa, \ov G)=(\Ga(r,s), G(r,s))$ is as in line 1 of Table~\ref{tbl1} with $s$ even, then 
$\ovGa_{\ov N}=C_r$ extends to $\ovGa_{\ov N_2}=C_{2r}$ (see Lemma~\ref{lem:con1}). 
In this case $\ovGa_{\ov N_2}, \ovGa_{\ov M}$ are independent for $(\ovGa, \ov G)$, and $\ov N_2\cap \ov M = 1$. 
Applying Theorem~\ref{thm3} to  $(\ovGa, \ov G)$ with these cyclic normal quotients of orders $2r$ and $s$, we find that 
the pair $(\ovGa, \ov G)$ is given by line 2 of Table~\ref{tbl1}. Thus $(\Ga(r,s), G(r,s))\cong (\Ga^+(2r,s), G^+(2r,s))$
when $r$ is odd and $s$ is even.  

\medskip\noindent
(b)\quad  For each of the pairs $(\ovGa, \ov G)$ in Theorem~\ref{thm3},
$\ovGa$ is arc-transitive (see Definition~\ref{def1} and Construction~\ref{con2a}).
However there may still be $\frac{1}{2}$-arc-transitive graphs $\Ga$ 
with automorphism groups $G$ such that $(\Ga,G)\in\OG(4)$ has
independent cyclic normal quotients $\Ga_M, \Ga_N$, since 
the normal quotient $\ovGa = \Ga_{M\cap N}$ may admit certain automorphisms
making it arc-transitive which do not lift to automorphisms of $\Ga$. 
In the next subsection we discuss this possibility further.

\begin{problem}\label{prob2}{\rm
Describe the maximal cyclic normal quotients of all the pairs 
$(\ovGa, \ov G)$ in Table~\ref{tbl1}, and in particular decide whether there are 
any further relations between these graph families (beyond the
isomorphisms given in the remark above.
}
\end{problem}

\subsection{Comparison with the approach of Maru\v{s}i\v{c}  
and \v{S}parl~\cite{MS} for weak metacirculants}\label{sub:wm}  

The first infinite family of $\frac{1}{2}$-arc transitive graphs,
constructed in~\cite{AMN}, consisted of `metacirculant graphs' 
which had been introduced by Alspach and Parsons~\cite{AP} in 1982.
Several other constructions of $\frac{1}{2}$-arc transitive graphs 
also turned out to be metacirculants. 
Recently Maru\v{s}i\v{c} and \v{S}parl~\cite{MS} embarked on a
thorough analysis of a (proper) sub-family
of $\OG(4)$, called weak metacirculants, with a special 
focus on those which are $\frac{1}{2}$-arc transitive.     

Maru\v{s}i\v{c} and \v{S}parl~\cite{MS} in \cite{MS} define a graph $\Ga$ to be a 
\emph{weak $(m,n)$-metacirculant} relative to an ordered pair 
$(\rho, \lambda)$ of its automorphisms, if $\Ga$ has $mn$ vertices, 
the automorphism $\rho$ has $m$ cycles of length $n$ on vertices, 
the cyclic subgroup $\la \lambda\ra$ permutes the $\rho$-cycles 
transitively, and
$
\rho^\lambda=\rho^r, \ \mbox{for some $r$ such that  $\gcd(r,n)=1$.}
$
The subgroup $\la \rho, \lambda \ra$ is thus a metacyclic group 
which acts transitively on the vertices of $\Gamma$.
If, in addition, $\lambda^m$ fixes a vertex (in which case $\lambda^m$ fixes 
a vertex in each $\rho$-cycle), then $\Ga$ is called a \emph{metacirculant}.
A graph may be a weak metacirculant relative to more than one pair
$(\rho, \lambda)$ (see Construction~\ref{ex4}), and according to \cite[p.\,368]{MS}, it is an open question whether or not
all weak metacirculants are in fact metacirculants (relative perhaps to 
some other pair of automorphisms). 

We classified in Theorem~\ref{thm3} the graph-group pairs $(\Ga,G)\in\OG(4)$ which have
independent cyclic normal quotients. 
We wish to understand which of them are weak metacirculants
relative to pairs of automorphisms lying in $G$.
We hope thereby to gain a better understanding of how our normal quotient analysis 
compares with the analysis in \cite{MS} which focuses on alternating 
cycles. As a bsais for this discussion we make the following assumptions.

\begin{hypotheses}\label{hyp1} 
Let $(\Ga,G)\in\OG(4)$, and suppose that $\Ga$ is a weak $(m,n)$-meta\-circ\-ulant
relative to $(\rho, \lambda)$, for some $\rho,\lambda\in G$. 
Let  $H :=  \la \rho, \lambda \ra$ and $R := \la \rho \ra$, 
so $H\leq G$, the subgroup $R$ is normal in $H$, and $H$ is  transitive 
on the vertices of $\Ga$. 
\end{hypotheses}
\noindent
Since the focus in \cite{MS} is on
$\frac{1}{2}$-arc transitive graphs, and since no Cayley graph 
of an abelian group is $\frac{1}{2}$-arc transitive (see \cite[beginning of \S3]{MS}), 
the authors there assume that $H$ is non-abelian. From this they deduce 
that the number $m$ of $R$-orbits is at least $3$ (\cite[Proposition 3.2]{MS}). 
We lose little generality if we also assume that  $m\geq3$.  
The subdivision  introduced in \cite{MS} of the family of weak 
metacirculants in $\OG(4)$, and studied further in \cite{AS,SI,SII,SIII,SIV},
is defined according to the nature of a certain quotient graph defined modulo 
the $R$-orbits $X_i$ ($1\leq i\leq m$) on vertices (that is to say, the $\rho$-cycles).
 This quotient is slightly different from our graph quotients in that 
it also encodes, for a vertex $x$ in $X_i$, 
the number of edges from $x$ to vertices in $X_j$, for each $j$. Four different 
kinds of quotients are identified in \cite{MS}, and these are used to subdivide 
the weak metacirculants in $\OG(4)$ into four classes, denoted I, 
II, III, IV.
The reason that four different behaviours are observed in \cite{MS} is that 
some of the quotient graphs in \cite{MS} do not correspond to a normal quotient of 
$(\Ga,G)\in\OG(4)$ for any $G$. 
In fact, it is only the quotients arising for graphs 
in Class I of \cite{MS} which can possibly occur as cyclic normal 
quotients in our sense (see Lemma~\ref{lem:classI}). Moreover, 
for $(\Ga,G)\in\OG(4)$, it is possible to have different 
choices of $\rho, \lambda \in G$ leading to different quotients $\Ga_R$ of 
$\Gamma$ which may or may not be normal quotients of $(\Ga,G)$, and if they are
normal quotients, then they may or may not be $G$-oriented.  In Construction~\ref{ex4} we 
give explicit examples of metacirculants $(\Ga, G)\in\OG(4)$ with different pairs 
$(\rho, \lambda)$ of elements in $G$ illustrating each of these possibilities.

Now we turn to the property of having independent cyclic 
normal quotients, which is characterised in Theorem~\ref{thm3}.  
In that result we do not assume a priori that the edge-transitive group contains 
a weak metacirculant subgroup. However, all of the pairs $(\ov{\Ga},\ov{G})$
in the outcome of Theorem~\ref{thm3} turn out to be metacirculants. We
obtain more  Class I weak metacirculants 
in Theorem~\ref{thm3} than those obtained in \cite{MS} because 
our assumptions are valid for some arc-transitive graphs as well as
$\frac{1}{2}$-arc transitive graphs. For example, 
if $\Ga$ is $\frac{1}{2}$-arc transitive, then 
it follows from the proofs of \cite[Lemmas 4.2 and 4.3]{MS}
that the quotient $\Ga_R$ is $G$-oriented, whereas 
the graphs in lines 3--6 of Table~\ref{tbl1}
correspond to examples in Theorem~\ref{thm3}(c) having
a $G$-unoriented normal quotient which can occur as $\Ga_R$
for suitable choices of $\rho, \lambda$.  

Our approach can bring additional insights to the work in \cite{MS}.
In the light of our discussion
it makes sense to restrict to the Class I weak metacirculants for which 
$\Ga_R$ is a cyclic $G$-oriented normal quotient of $(\Ga,G)$. 
Using Theorem~\ref{thm3} we find all such graphs 
with independent cyclic normal quotients.

\begin{corollary}\label{cor:wm} 
Suppose that Hypothesis~$\ref{hyp1}$ holds with $\Ga_R$ a cyclic, 
$G$-oriented, normal quotient of $(\Ga,G)$ of length at least $3$. Suppose also that 
$(\Ga,G)$ has independent cyclic normal
quotients $\Ga_N\cong C_r, \Ga_M\cong  C_s$, as in Theorem~$\ref{thm3}$. 
Then $(\Ga,G)$ is a normal cover of 
$(\ovGa,\ov G)\in\OG(4)$ such that $\ovGa$ is a weak $(\ov{m}, \ov{n})$-metacirculant relative to 
$(\ov{\rho}, \ov{\lambda})$ and one of the lines of Table~$\ref{tbl:wm}$ holds. 
\end{corollary}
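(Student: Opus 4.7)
The plan is to combine Theorem~\ref{thm3} with the weak metacirculant structure imposed by Hypothesis~\ref{hyp1}, and then unpack each of the six outcome families of Table~\ref{tbl1} in turn. Concretely, I would first apply Theorem~\ref{thm3} to conclude that $(\Ga,G)$ is a normal cover of some $(\ovGa,\ov G)\in\OG(4)$ appearing in one of the lines of Table~\ref{tbl1}, with independent cyclic normal quotients $\ovGa_{\ov N}\cong C_r$, $\ovGa_{\ov M}\cong C_s$ and $\ov N\cap\ov M=1$. By Theorem~\ref{thm3}(a) the normal quotient $\ov\Ga_{\ov N}$ is $\ov G$-unoriented, so the $G$-oriented cyclic normal quotient $\Ga_R$ hypothesised here must descend to (or be refined by) the $G$-oriented member among $\Ga_N, \Ga_M$; comparing stabiliser-kernels $\tilde R, \tilde N, \tilde M$ pins down how $R$ sits relative to $N, M$ in each of the six cases.

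Next I would push the weak metacirculant data down to $\ovGa$. Let $K$ be the kernel of $G$ on $\ovGa$, so that $\ov G=G/K$. Setting $\ov H := HK/K=\la \ov\rho,\ov\lambda\ra$ and $\ov R := \la\ov\rho\ra$, the defining relations of a weak metacirculant survive the quotient: $\ov R\trianglelefteq \ov H$, we have $\ov\rho^{\,\ov\lambda}=\ov\rho^{\,r_0}$ with $\gcd(r_0,|\ov\rho|)=1$, and $\ov H$ is vertex-transitive on $\ovGa$ because $H$ was vertex-transitive on $\Ga$. To promote this to genuine weak $(\ov m,\ov n)$-metacirculant data I have to read off $\ov n := |\ov\rho|$ and $\ov m := |V(\ovGa)|/\ov n$, and verify that $\ov\rho$ really has $\ov m$ cycles of length $\ov n$ and that $\la\ov\lambda\ra$ permutes them transitively. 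Both follow from the fact that the $\ov R$-orbits on $V(\ovGa)$ form a system of blocks for $\ov H$ corresponding to the normal quotient $\ovGa_{\ov R}$, which (being a $\ov G$-oriented cyclic quotient of $\ovGa$) must be a further quotient of $\ovGa_{\ov M}$ (in lines 1, 2) or of $\ovGa_{\ov N}$ (in lines 3--6). This gives a clean description of $\ov R$ inside the explicit metacyclic groups $G(r,s),G^+(r,s),H(r,s),H^+(r,s),G_2(r,s)$ of Constructions~\ref{con1}, \ref{con2c}, \ref{con2a}.

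The main obstacle is the case analysis: for each of the six lines of Table~\ref{tbl1} I have to exhibit admissible choices of $\ov\rho\in\ov G$ whose image in the $\ov G$-oriented cyclic normal quotient is a generator of the cyclic group acting there, and a matching $\ov\lambda$ that normalises $\la\ov\rho\ra$ and moves the $\ov\rho$-cycles transitively; then identify the resulting parameters $(\ov m,\ov n)$ and the exponent $r_0$ in $\ov\rho^{\,\ov\lambda}=\ov\rho^{\,r_0}$. This is a finite computation in the explicit metacyclic presentations of the groups $\ov G$, and it is what fills in the rows of Table~\ref{tbl:wm}: each line of Table~\ref{tbl1} yields either one or (when both $r,s$ admit nontrivial metacirculant generators) two candidate rows, and one discards any pair $(\ov\rho,\ov\lambda)$ whose associated $\ovGa_{\ov R}$ is not a $\ov G$-oriented cyclic normal quotient of length $\ge 3$ (the parity and divisibility conditions in the `Conditions' column of Table~\ref{tbl1} determine exactly when such generators exist).

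Finally, I would lift the conclusion back to $(\Ga,G)$. Since $(\Ga,G)$ is a normal cover of $(\ovGa,\ov G)$, the corollary only asserts properties of $\ovGa$, and the weak metacirculant data $(\ov\rho,\ov\lambda)$ obtained above is precisely what is tabulated in Table~\ref{tbl:wm}; this closes the argument with no further work on $\Ga$ itself.
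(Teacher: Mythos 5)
There is a genuine gap. The crux of this corollary is explaining why only lines 1 and 2 of Table~\ref{tbl1} survive, i.e.\ why the hypothesis that $\Ga_R$ is a $G$-\emph{oriented} cyclic normal quotient forces the $G$-oriented member of the independent pair to exist and to coincide (up to the reductions) with $\Ga_R$. The paper achieves this with Lemma~\ref{lem:gar}: after possibly replacing $(\rho,\lambda)$ inside $H$ and replacing the pair of independent cyclic normal quotients, one may assume that $\Ga_R$ \emph{is} one of the two independent quotients; since it is $G$-oriented, Theorem~\ref{thm3}(a) and column~3 of Table~\ref{tbl1} then eliminate lines 3--6 outright. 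Your proposal never makes this identification. Instead you plan to take the $(\ovGa,\ov G)$ from any of the six lines and ``discard any pair $(\ov{\rho},\ov{\lambda})$ whose associated $\ovGa_{\ov R}$ is not a $\ov G$-oriented cyclic normal quotient.'' This does not work as stated, because the hypothesis concerns $\Ga_R$ as a quotient of $(\Ga,G)$, not of $(\ovGa,\ov G)$: the kernel $\hat R$ of the $G$-action on $\Ga_R$ need not contain $M\cap N$, so $\Ga_R$ need not descend to any normal quotient of $\ovGa$ at all, and a cyclic quotient $G/\hat R$ of order $\geq 3$ can collapse when you pass to $\ov G$. Hence a ``finite computation in the explicit presentations of $\ov G$'' cannot, by itself, rule out lines 3--6; the comparison of $R$ with $M$ and $N$ has to be carried out upstairs in $G$, which is exactly what Lemma~\ref{lem:gar} does (via the cases $\hat R\supseteq M$, $M\supsetneq\hat R$, and $T=R\cap M$).

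A second, smaller problem is your plan to obtain the weak metacirculant structure on $\ovGa$ by pushing the original $(\rho,\lambda)$ down to $\ov H=HK/K$. The relation $\ov\rho^{\,\ov\lambda}=\ov\rho^{\,r_0}$ and the coprimality condition do survive, but you still owe an argument that $\ov\rho$ is semiregular with orbits of a common length $\ov n$ and that $\la\ov\lambda\ra$ permutes those orbits transitively; none of this is automatic for an image of $\rho$ modulo $K$. The paper avoids the issue entirely: once the pair is pinned to line 1 or 2, it exhibits \emph{fresh} metacirculant pairs $(\mu,\nu)$ and $(\mu^2,\mu\nu)$ inside the explicit groups $G(r,s)$ and $G^+(r,s)$ and verifies the definition directly, which is also how the entries $\ov{m}=s$, $\ov{n}=r$ or $r/2$ and the names $X_o(s,r;1)$, $X_e(s,r/2;1,0)$ in Table~\ref{tbl:wm} are obtained. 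To repair your argument you would need either to prove a statement equivalent to Lemma~\ref{lem:gar}, or to show directly in $(\Ga,G)$ (not merely in $(\ovGa,\ov G)$) that the groups of lines 3--6 admit no $G$-oriented cyclic normal quotient of length at least $3$ compatible with Hypothesis~\ref{hyp1}.
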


\begin{table}\begin{center}
\begin{tabular}{llllllll}
$\ov{m}$ & $\ov{n}$   & $\ovGa$      & $\ov G$    & $\ov{\rho}$   & $\ov{\lambda}$  & Conditions on $r$, $s$& Name for $\ovGa$ in \cite{MS}  \\ \hline
$s$ & $r$   &$\Ga(r,s)$    & $G(r,s)$   & $\mu$    & $\nu$     &at least one odd       & $ X_o(s,r;1)$  \\
$s$ & $r/2$ &$\Ga^+(r,s)$  & $G^+(r,s)$ & $\mu^2$  & $\mu\nu$  &both even              & $X_e(s,r/2;1,0)$\\
 \hline
              \end{tabular}
              \end{center}
\caption{Table for Corollary~\ref{cor:wm}}\label{tbl:wm}
\end{table}

This corollary is proved in Section~\ref{sec:wm}. 
The Class I metacirculant graphs which are $\frac{1}{2}$-arc transitive 
were proved in \cite[Theorem 4.1]{MS} to be precisely the connected 
4-valent tightly attached $\frac{1}{2}$-arc transitive graphs (see also \cite{SII}). 
To assist in comparing Corollary~\ref{cor:wm}  with that classification 
we give in Table~\ref{tbl:wm} also the names of the graphs $\ovGa$
used in \cite[Examples 2.1 and 2.2]{MS}.

\section{Constructions and proof the Theorem~\ref{thm1}}\label{sec:ex}

In this section we examine several infinite families of graph--group pairs in $\OG(4)$, 
and describe their cyclic normal quotients. In Subsection~\ref{sub:thm1} we prove Theorem~\ref{thm1}.

\subsection{Notation}\label{sub:notation} 

For fundamental graph theoretic concepts please refer to the book \cite{GR}. 
For a subset $Y$ of the vertex set of a graph $\Ga$, the \emph{induced subgraph}  $[Y]$ is the
graph with vertex set $Y$ and the edges of $[Y]$ are those edges of $\Ga$ with both vertices in $Y$.   
An induced subgraph will inherit an edge-orientation from an edge-orientation of $\Ga$.

A permutation group $N$ on a set $V$ is
\emph{semiregular} if the only element of $N$ fixing a point of $V$ is the identity; also $N$
is \emph{regular} if it is both transitive and semiregular. 

For $(\Ga,G)\in\OG(4)$, if
an edge $\{x,y\}$ is oriented from $x$ to $y$, then
we call $y$ an \emph{out-neighbour} of $x$, and  we call $x$ an \emph{in-neighbour} of $y$. 
By a \emph{neighbour} of $x$ we mean an in-neighbour or an out-neighbour.

We say that graph--group pairs
$(\Ga,G)$ and $(\Ga',G')$ are \emph{isomorphic} if there exist a graph isomorphism $f$ from $\Ga$ to $\Ga'$ 
and a group isomorphism $\varphi:G\rightarrow G'$ such that, for all $x\in V\Ga$ and $g\in G$, $(x^g)f=(xf)^{g\varphi}$.

For a group $K$ with an inverse-closed generating set $S$ such that $1_K\not\in S$ (that is $S^{-1}=\{s^{-1} | s\in S\}$ is equal to $S$), 
the \emph{Cayley graph} $\Cay(K,S)$ is the graph with vertex set $K$ and edges $\{k,sk\}$ for $k\in K, s\in S$. 
The facts that $S$ generates $K$ and $S$ is inverse-closed imply that $\Cay(K,S)$ is connected and undirected, respectively. 
The definition of adjacency implies that $K$ acts faithfully by right multiplication as a vertex-regular group of 
automorphisms of $\Cay(K,S)$.  Also, $S$ is the set of neighbours of the vertex 
$1_K$, and the subgroup of $\Aut(K)$ leaving $S$ invariant acts naturally as a subgroup of automorphisms
stabilising $1_K$. 

\subsection{Preliminaries on cyclic normal quotients}
We first show that oriented and unoriented cyclic normal quotients of $(\Ga,G)\in\OG(4)$ can be distinguished 
by the action on vertices of the normal subgroup.

\begin{lemma}\label{lem:N}
Let $(\Ga,G)\in\OG(4)$ have a cyclic normal quotient $\Ga_N$, and let $\tilde N$
be the subgroup consisting of all elements of $G$ fixing each $N$-orbit setwise.
\begin{enumerate}
\item[(a)] If $\Ga_N$ is $G$-unoriented, then $\tilde N=N$ and is semiregular on $V\Ga$.
\item[(b)] If $\Ga_N$ is $G$-oriented, then $\tilde N$ contains $G_x$, for each vertex $x$.
\end{enumerate}
\end{lemma}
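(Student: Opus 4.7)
The plan is to analyse the action of $G/\tilde N$ on the cycle $\Ga_N = C_r$ and exploit the dichotomy that, since $G/\tilde N$ acts faithfully and vertex- and edge-transitively on $C_r$, it is either the cyclic rotation group $C_r$ or the dihedral group $D_{2r}$. If $\Ga_N$ is $G$-unoriented, then there is a pair of adjacent $N$-orbits $B,C$ with edges running in both directions; edge-transitivity of $G$ on $\Delta$ then produces an element of $G$ mapping an out-edge from $B$ to $C$ to an out-edge from $C$ to $B$, which induces a reflection on $\Ga_N$, so $G/\tilde N \cong D_{2r}$. Conversely, a $G$-orientation of $C_r$ is preserved only by rotations, so $G/\tilde N \cong C_r$ in the oriented case.

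Part (b) is then immediate. For $h \in G_x$, the image of $h$ in $G/\tilde N \cong C_r$ fixes the vertex $x^N$ of $C_r$; as non-trivial rotations of $C_r$ are fixed-point-free for $r\ge 2$, this image is trivial and $h \in \tilde N$.

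For part (a) the crucial step is to pin down the local in/out-neighbour pattern. By \cite[Proposition~3.1]{janc1} and valency four, each vertex $x \in B_i$ has exactly two neighbours in each of the adjacent orbits $B_{i-1}$ and $B_{i+1}$. A vertex-transitivity argument shows that either both out-neighbours of every vertex lie in one fixed adjacent orbit, which forces $\Ga_N$ to be $G$-oriented, or else every vertex has exactly one in- and one out-neighbour in each of $B_{i-1}$ and $B_{i+1}$. In the $G$-unoriented case the second alternative must hold, and then each of the four neighbours of $x$ is uniquely determined by the ordered pair (adjacent $N$-orbit, in-neighbour or out-neighbour). Any $h \in \tilde N \cap G_x$ respects both the $N$-orbit partition and the arc set $\Delta$, so it must fix every neighbour of $x$ pointwise; iterating at each newly-fixed vertex — which is legitimate because the same mixed pattern holds at every vertex — and using connectedness of $\Ga$ gives $h = 1$. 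Thus $\tilde N$ is semiregular on $V\Ga$. Since $\tilde N \supseteq N$ is transitive on each $N$-orbit, semiregularity forces $|\tilde N| = |B_i| = |N|$, so $\tilde N = N$. The principal obstacle is the derivation of the mixed local pattern from the $G$-unoriented hypothesis; once this is in place, the connectivity argument and the orbit counting are routine.
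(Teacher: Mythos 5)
Your proof is correct and follows essentially the same route as the paper: for (a) you establish the mixed one-in/one-out neighbour pattern in each adjacent $N$-orbit (which the paper asserts directly), deduce that $\tilde N_x$ fixes all four neighbours of $x$, and conclude via connectivity and an orbit count; for (b) you observe that an orientation forces $G$ to induce only rotations on $C_r$, so vertex stabilisers lie in the kernel $\tilde N$. The extra detail you supply in deriving the local pattern from $G$-unorientedness is a legitimate filling-in of a step the paper leaves implicit.
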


\begin{proof}
Let $x\in V\Ga$, let $y, y'$ be the out-neighbours of $x$, and let $z, z'$ be the in-neighbours of $x$. Let
$B, B'$ denote the $N$-orbits containing $x, y$ respectively. Suppose that $\Ga_N=C_r$, for some $r\geq3$.

(a) Suppose that $\Ga_N$ is $G$-unoriented. Then $B'$ contains also one of the in-neighbours of $x$, say $z$,
and the other neighbours $y', z'$ lie in a third $N$-orbit distinct from $B, B'$.
Consider the stabiliser ${\tilde N}_x$ of $x$ in $\tilde N$. By the definition of $\tilde N$, 
the subgroup $\tilde N_x$ must fix the $N$-orbit $B'$ 
setwise, and hence must fix the unique out-neighbour $y$ of $x$ it contains. 
Thus ${\tilde N}_x$ fixes each of $y$ and $y'$. Similarly ${\tilde N}_x$ fixes each of $z, z'$. 
It follows from the connectivity of $\Ga$ that ${\tilde N}_x=1$. Thus $\tilde N$ is 
semiregular on $V\Ga$, and in particular $\tilde N=N$.

(b) Now suppose that $\Ga_N$ is $G$-oriented. Then $B'$ contains both out-neighbours $y, y'$ of $x$. 
In this case $G$ induces a cyclic group $Z_r$ on $\Ga_N$, and the setwise stabiliser
$G_B$ of $B$ fixes each $N$-orbit setwise, that is to say, $G_B=\tilde N$.
Since $G_x < G_B$, we have $G_x < \tilde N$.  
\end{proof}

Using this lemma we prove that two oriented cyclic normal quotients cannot be independent. 

\begin{lemma}\label{lem:NM}
Let $(\Ga,G)\in\OG(4)$, and let $\Ga_N=C_r$, $\Ga_M=C_s$, for some $r, s\geq3$, where $N, M$ consist of all 
elements of $G$ fixing setwise each $N$-orbit, or each $M$-orbit, respectively. If both $\Ga_N$ and $\Ga_M$ are $G$-oriented, 
then $\Ga_{N\cap M}=C_t$ is $G$-oriented, for some multiple $t$ of $\lcm\{r,s\}$. In particular $\Ga_N, \Ga_M$ are not independent.
\end{lemma}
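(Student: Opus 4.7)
The plan is to set $K=N\cap M$, apply Lemma~\ref{lem:N}(b) to both cyclic quotients to obtain $G_x\leq K$ for every vertex $x$, and then show directly that $\Ga_K$ is a $G$-oriented cycle of length a common multiple of $r$ and $s$.

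First I would establish the auxiliary equality $x^N\cap x^M=x^K$ for every vertex $x$. If $y=x^n=x^m$ with $n\in N$, $m\in M$, then $nm^{-1}\in G_x\leq K\leq M$, which forces $n\in M\cap N=K$ and hence $y=x^n\in x^K$; the reverse inclusion is immediate. This precludes two distinct $K$-orbits from mapping to the same pair of $N$- and $M$-orbits.

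Next I would compute the valency of $\Ga_K$. Let $y_1,y_2$ be the out-neighbours and $z_1,z_2$ the in-neighbours of $x$ in $\Ga$. Because $\Ga_N=C_r$ is $G$-oriented with $r\geq3$, the vertex $x^N$ has a unique out-neighbour in $\Ga_N$, which forces $y_1^N=y_2^N$; the same argument in $\Ga_M$ gives $y_1^M=y_2^M$. Applying the auxiliary equality to $y_1$ then yields $y_1^K=y_2^K$, and symmetrically $z_1^K=z_2^K$. Moreover $y_1^K\neq z_1^K$, since the out- and in-neighbours of $x^N$ in the cycle $\Ga_N$ of length at least $3$ are distinct. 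So $x^K$ has exactly two distinct neighbours in $\Ga_K$, and since $\Ga_K$ is connected it is a cycle $C_t$ with $t=|V\Ga_K|$.

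Finally I would check the orientation and the divisibility. Each edge of $\Ga_K$ arises from edges lying between a fixed pair of $N$-orbits, on which the orientation is uniform, so the orientation of $\Ga$ descends consistently and $\Ga_K$ is $G$-oriented. Since $G_x\leq K$, any $g\in G$ that fixes every $K$-orbit lies in $K\cdot G_x=K$, so $K$ is the kernel of the $G$-action on $V\Ga_K$. Hence $G/K$ embeds as a transitive, orientation-preserving subgroup of $\Aut(C_t)$, which is cyclic of order $t$, giving $G/K\cong Z_t$. The canonical surjections $G/K\twoheadrightarrow G/N\cong Z_r$ and $G/K\twoheadrightarrow G/M\cong Z_s$ then force $r\mid t$ and $s\mid t$, so $\lcm\{r,s\}\mid t$; and because $\Ga_K$ is a cycle, $\Ga_N$ and $\Ga_M$ are not independent in the sense of Definition~\ref{def:indept}. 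The most delicate step is the valency count, which crucially relies on both quotients being $G$-\emph{oriented} cycles of length at least $3$ so that each quotient vertex has a unique out-neighbour.
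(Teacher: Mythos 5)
Your proof is correct, and it takes a genuinely more self-contained route than the paper's. The paper, after invoking Lemma~\ref{lem:N}(b) to get $G_x\leq K:=N\cap M$ (so $K\neq 1$ and $K$ is not semiregular), immediately appeals to the external structure theorem \cite[Theorem 1.1]{janc1} to conclude that $\Ga_K$ is degenerate, and then rules out the single-vertex and single-edge degeneracies by observing that the number of $K$-orbits is divisible by $\lcm\{r,s\}\geq 3$. You instead prove that $\Ga_K$ is a cycle by a direct local computation: the identity $x^N\cap x^M=x^K$ (a nice consequence of $G_x\leq K$) shows that the two out-neighbours of $x$, which lie in a common $N$-orbit and a common $M$-orbit because both quotients are $G$-oriented cycles of length at least $3$, must lie in a common $K$-orbit, and likewise for the in-neighbours; hence $\Ga_K$ is connected and $2$-valent. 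Your divisibility argument via the faithful cyclic actions $G/K\cong Z_t$ surjecting onto $G/N\cong Z_r$ and $G/M\cong Z_s$ is also cleaner than the paper's implicit orbit count, and your verification that $K$ equals the kernel of the action on $\Ga_K$ (needed to match Definition~\ref{def:indept}, where independence is tested against $\tilde N\cap\tilde M$) is a point the paper leaves tacit. What the paper's citation buys is brevity; what your argument buys is independence from \cite{janc1} and an explicit description of how the two orientations interact at the level of a single vertex, which is the same mechanism used later in Section~\ref{sec:thm3}.
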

 
\begin{proof}
By Lemma~\ref{lem:N}(b), the subgroup $K:=N\cap M$ contains $G_x$, 
for a vertex $x$, and in particular $K\ne 1$. This implies, by 
\cite[Theorem 1.1]{janc1}, that $\Ga_K$ is degenerate, and since 
it has order at least $\lcm\{r,s\}\geq3$, it must be a cycle of 
length a multiple $t$ of $\lcm\{r,s\}$. Since $\Ga_N$ is isomorphic 
to a quotient of $\Ga_K$, it follows that $\Ga_K$ is $G$-oriented.  
\end{proof}

\subsection{Examples with many cyclic normal quotients}
The first family of graphs we consider consists of the lexicographic products
$C_r[2.K_1]$ with a natural orientation on their edges.

\begin{construction}\label{ex:najat}
{\rm
Let $r\geq3$ and let $\Ga$ be the graph with vertex set $X=\mathbb{Z}_r\times \mathbb{Z}_2$ 
and edges $\{(i,j),(i+1,j')\}$ for all $i\in\mathbb{Z}_r, j,j'\in\mathbb{Z}_2$, 
that is, $\Ga= C_r[2.K_1]$, the lexicographic product of $C_r$ and $2.K_1$.
We orient the edges so that $(i,j) \rightarrow (i+1,j')$, for all $i,j,j'$.
Let 
\[
G=Z_2\wr Z_r=\{(\sigma_1,\dots,\sigma_r)\tau^\ell\, |\, 0\leq \ell<r,\ \mbox{each}\ \sigma_k\in \ZZ_2 \} 
\] 
where $(\sigma_1,\dots,\sigma_r):(i,j)\mapsto (i, j+\sigma_i)$, and $\tau:(i,j)\mapsto (i+1,j)$, and let 
\[
B=\{(\sigma_1,\dots,\sigma_r) |\ \mbox{each}\ 
\sigma_k\in \ZZ_2\}=Z_2^r, 
\]
the `base group' of $G$.
}
\end{construction}

By \cite[Lemma 3.6]{janc1}, $G$ preserves the edge orientation, $(\Ga, G)\in\OG(4)$, and $(\Ga,G)$ is basic of cycle type.
We parametrise all the cyclic normal quotients of $(\Ga,G)$ with the set of divisors of $r$, and show that
$(\Ga,G)$ does not have independent cyclic normal quotients. A cyclic normal quotient $\Ga_N$ is \emph{maximal}
if there is no normal subgroup $K$ of $G$, contained in $N$, and such that $\Ga_K$ is cyclic of order larger than $\Ga_N$.

\begin{lemma}\label{lem:najat}
Let $r, \Ga, G, X$ be as in Construction~$\ref{ex:najat}$, and for a divisor $c$ of $r$
let $N(c)= B\cdot\la\tau^{c}\ra$. Then 
$N(c)$ is normal in $G$ and $\Ga_{N(c)}=C_c$ and is $G$-oriented, if $c\geq3$, or is 
$K_2$ or $K_1$ if $c=2$ or $1$, respectively. 
Moreover each proper 
normal quotient of $(\Ga,G)$ is equal to
$\Ga_{N(c)}$ for some $c$, and $\Ga_B=\Ga_{N(r)}$ is the unique maximal
cyclic normal quotient.
\end{lemma}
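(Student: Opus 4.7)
The plan is to verify the four assertions in turn---normality of $N(c)$, identification of $\Ga_{N(c)}$, exhaustiveness, and maximality---with the main effort being the classification step. First I would show $B\trianglelefteq G$, being the kernel of the projection $G\to\langle\tau\rangle\cong Z_r$, $(\sigma_1,\dots,\sigma_r)\tau^\ell\mapsto\tau^\ell$; since $G/B\cong Z_r$ is abelian, $N(c)/B=\langle\tau^c B\rangle$ is normal in $G/B$, so $N(c)\trianglelefteq G$. Because $B$ acts transitively on each column $C_i:=\{i\}\times\mathbb{Z}_2$ and trivially between columns, the $B$-orbits are exactly the $C_i$, and $\tau^c$ sends $C_i\mapsto C_{i+c}$; hence the $N(c)$-orbits are $O_{i_0}=\{(i,j)\in X : i\equiv i_0\pmod{c}\}$ for $i_0=0,\dots,c-1$. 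Both out-neighbours of any vertex in $O_{i_0}$ lie in the orbit indexed by $i_0+1\pmod{c}$, so $\Ga_{N(c)}$ has $c$ vertices with edges $\{O_{i_0},O_{i_0+1}\}$ (indices taken mod $c$), consistently oriented from $O_{i_0}$ to $O_{i_0+1}$: a $G$-oriented cycle $C_c$ for $c\geq3$, and $K_2$ or $K_1$ for $c=2$ or $c=1$.

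The main step is to show every proper normal quotient of $(\Ga,G)$ has this form. Given $K\trianglelefteq G$ with $K\neq 1$, the quotient $\Ga_K$ depends only on the $K$-orbit partition $\mathcal{P}$ of $X$, which is $G$-invariant; so it suffices to show $\mathcal{P}$ is the $N(c)$-orbit partition for some divisor $c$ of $r$. I would split cases by whether some block $P\in\mathcal{P}$ contains two distinct vertices of a single column. If so then $P\supseteq C_i$ for that column, and $G$-invariance together with the transitivity of $\langle\tau\rangle$ on columns forces every block to be a union of columns; hence $\mathcal{P}$ descends to a $Z_r$-invariant partition of the $r$ columns, which must be a coset partition of some subgroup $\langle\tau^c\rangle$ with $c\mid r$---giving $\mathcal{P}=\{O_0,\dots,O_{c-1}\}$ as desired. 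Otherwise every block of $\mathcal{P}$ meets each column in at most one vertex; choose a non-singleton block $P$ (which exists because $K\neq 1$) and two distinct vertices $(i_1,j_1),(i_2,j_2)\in P$, necessarily with $i_1\neq i_2$. The element $b\in B$ which flips only the $i_1$-column fixes $(i_2,j_2)$ but sends $(i_1,j_1)$ outside $P$; then $bP\neq P$, yet $(i_2,j_2)\in P\cap bP$, contradicting the block axiom. Finally, since each cyclic quotient $\Ga_{N(c)}$ has length $c\mid r$, the maximum is attained uniquely at $c=r$, giving $\Ga_B=\Ga_{N(r)}$ as the unique maximal cyclic normal quotient.

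The principal obstacle is the second case of the classification: ruling out $G$-invariant partitions that cross the column structure. The single-column-flip argument works precisely because each column has exactly two vertices, so $B$ contains a transposition supported on any single column; once this case is dispatched, everything else reduces to direct computation with the cyclic quotient $G/B\cong Z_r$.
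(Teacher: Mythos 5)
Your proof is correct, and its overall skeleton (normality of $N(c)$ via $G/B\cong Z_r$, identification of $\Ga_{N(c)}$ from the column structure, reduction of an arbitrary nontrivial normal subgroup to some $N(c)$, then maximality) matches the paper's. The genuine difference is in the key classification step: the paper simply cites \cite[Lemma 3.6]{janc1} both for $\Ga_B\cong C_r$ being $G$-oriented and for the fact that the orbits of any nontrivial normal subgroup are unions of $B$-orbits, whereas you re-prove the latter from scratch. Your case split on whether a block of the $K$-orbit partition contains a full column, with the single-column-flip element of $B$ producing two distinct non-disjoint blocks in the remaining case, is a clean, self-contained replacement for the imported result, exploiting exactly the feature (columns of size $2$, so $B$ contains a ``transposition'' supported on one column) that makes it work. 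What the paper's route buys is brevity; what yours buys is independence from the external reference and an explicit exhibition of why no $G$-invariant partition can cross the column structure.

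One small point to tighten: the paper's Definition of a \emph{maximal} cyclic normal quotient is about non-extendability (no normal $K\le N$ with $\Ga_K$ a longer cycle), not about having the largest length, so ``the maximum is attained uniquely at $c=r$'' does not by itself rule out that some $\Ga_N\cong C_c$ with $c<r$ is also maximal. This is settled by noting that $C_G(B)=B$ forces $N\cap B\ne 1$ for every nontrivial normal $N$, whence $K=N\cap B$ is a normal subgroup inside $N$ with $\Ga_K=\Ga_B\cong C_r$; so every shorter cyclic quotient does extend. The paper's own proof is equally terse here, so this is a shared omission rather than a defect specific to your argument.
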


\noindent{\it Proof.\ }
It was shown in  \cite[Lemma 3.6]{janc1} that $\Ga_B\cong C_r$ and is $G$-oriented. 
By definition $N(c)$ contains $B$ and, since $G/B\cong Z_r$, the group $N(c)$ 
is normal in $G$ and $G/N(c)\cong Z_c$. Also the graph $\Ga_{N(c)}$ is isomorphic to the quotient of
$(\Ga_B,G/B)$ relative to the normal subgroup $N(c)/B\cong Z_{r/c}$ of $G/B$, 
so $\Ga_{N(c)}\cong C_c$ and is $G$-oriented, if $c\geq3$,
and is $K_2, K_1$ if $c=2, 1$ respectively.
 
Let $N$ be a nontrivial normal subgroup of $G$, and consider $\Ga_N$. It was shown in the proof of 
\cite[Lemma 3.6]{janc1} that each $N$-orbit on vertices is a union of 
some $B$-orbits. Thus $B$ fixes each vertex of $\Ga_N$ and so $\Ga_N=\Ga_{BN}$.
Since $G/B\cong Z_r$ it follows that $BN=N(c)$ for some divisor $c$ of $r$,
and hence $\Ga_N=\Ga_{N(c)}$. If $c=r$ then $N(r)=B$, and we conclude that
$\Ga_B=\Ga_{N(r)}$ is the unique maximal
cyclic normal quotient of $(\Ga,G)$. 

\subsection{Graphs with both independent oriented and unoriented cyclic normal quotients}
To prove the second part of Theorem~\ref{thm1}, we use a family of graphs given in Definition~\ref{def1}.
We define an edge-orientation and a group of automorphisms preserving it. We also describe a certain subgraph 
in some cases.   
These graphs were considered in \cite[Section 3]{Mar2} 
from the point of view of their alternating cycle structure (see also \cite[Example 2.4]{MP}).

Recall the concept induced subgraph from Subsection~\ref{sub:notation}.
Note that, if $k$ is even, then the integers representing a given element of $\ZZ_{k}$
are either all even, or all odd, and hence in this case elements of $\ZZ_k$
have a well-defined parity. First we define the undirected graphs and subgroups of their automorphism groups.

\begin{definition}\label{def1}{\rm
Let $r, s$ be integers, each at least $3$. 
Define the undirected graph $\Ga(r,s)$ to have vertex set $X:=\ZZ_r\times \ZZ_s$, such that a vertex 
$(i,j)\in X$ is joined by an edge to each of the four vertices $(i\pm 1, j\pm 1)$. 
Also, if $r, s$ are both even define 
\[
X^+:=\{(i,j)\in X\,|\, i, j\ \mbox{of the same parity}\} 
\]
and let $\Ga^+(r,s)=[X^+]$, the induced subgraph. 
Finally, define the following permutations of $X$, for $(i,j)\in X$,
\begin{align*}
\mu\,&: (i,j)\mapsto (i+1,j), \quad
&\nu\,&: (i,j)\mapsto (i,j+1), \quad \\
\sigma\,&: (i,j)\mapsto (-i,j), \quad
&\tau\,&: (i,j)\mapsto (-i,-j),
\end{align*}
and define the groups as in Table~\ref{tbl:groups}, where in lines 3 and 4 
($r, s$ both even), we identify $\mu,\nu,\sigma, \tau$ 
with their restrictions to $X^+$, and consider the subgroups $G^+(r,s)$ and $H^+(r,s)$ 
acting on $X^+$.

\begin{table}\begin{center}
\begin{tabular}{llll}
$\Ga$         & $G$  & Generators for $G$ & Conditions on $r$, $s$  \\ \hline
$\Ga(r,s)$   & $G(r,s)$   & $\mu, \nu, \sigma$         &  --       \\
$\Ga(r,s)$   & $H(r,s)$  & $\mu, \sigma\nu, \tau$      &$s$ even        \\
$\Ga^+(r,s)$ & $G^+(r,s)$ & $\mu^2, \mu\nu, \sigma$    & both even         \\
$\Ga^+(r,s)$ & $H^+(r,s)$ & $\mu^2,\sigma\mu\nu,\tau$  & both even          \\ \hline
              \end{tabular}
              \end{center}
\caption{Table for Definition~\ref{def1}}\label{tbl:groups}
\end{table}
 
Also let $\tilde M = \la\mu,\sigma\ra=D_{2r}$, $M = \la\mu\ra=Z_{r}$, 
$N=\la\nu\ra\cong Z_s$, and $N'=\la\nu^2,\tau\sigma\nu\ra$, and note that
$G(r,s)=\tilde M\times N$ and  $H(r,s)=
(M\times N').\la\tau\ra$ (with $s$ even; note that $(\sigma\nu)^2=\nu^2$).

Let $M_t=\la\mu^t\ra$ for $t\mid r$, and $N_t=\la\nu^t\ra$ for $t\mid s$. 
If $r$ and $s$ are both even, we also consider the following subgroups restricted 
to their actions on $X^+$:   
$\tilde M^+ = \la\mu^2,\sigma\ra=D_{r}$, $M^+=M_2=Z_{r/2}$, and $N^+=N_2\cong Z_{s/2}$.
}
\end{definition}

Recall that, for a graph $\Ga$, an action of a group $H$ is $\frac{1}{2}$-transitive on $\Ga$ if $H\leq\Aut(\Ga)$
and $H$ is transitive on the vertices and the edges of $\Ga$, but is not transitive on arcs. 

\begin{lemma}\label{lem:def1}
Let $r, s, \Ga(r,s), G(r,s)$, $H(r,s)$, $M, N$, $\Ga^+(r,s), G^+(r,s), H^+(r,s)$ be as in Definition~$\ref{def1}$. Then
\begin{enumerate}
\item[(a)] $\Ga(r,s)$ is connected if at least one of $r,s$ is odd; while if both $r,s$ are even
then $\Ga(r,s)$ has connected components $[X^+]$ and $[X\setminus X^+]$, and $\Ga^+(r,s)$ is connected.
\item[(b)] $\Ga(r,s)$ is bipartite if and only if at least one of $r,s$ is even; while if both $r, s$ are 
even, then $\Ga^+(r,s)$ is bipartite.  
\item[(c)]  $G(r,s)$, and (if $s$ is even) $H(r,s)$ are $\frac{1}{2}$-transitive on $\Ga(r,s)$; and if $r, s$ are both even, 
then  $G^+(r,s)$ and $H^+(r,s)$ are  $\frac{1}{2}$-transitive on $\Ga^+(r,s)$.
\end{enumerate}
\end{lemma}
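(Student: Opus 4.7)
The plan is to exploit the Cayley-graph structure of $\Ga(r,s)$. The subgroup $\la\mu,\nu\ra$ acts regularly on $X=\ZZ_r\times\ZZ_s$, so $\Ga(r,s)=\Cay(\ZZ_r\times\ZZ_s,S)$ with connection set $S=\{(\pm1,\pm1)\}$, the neighbour set of $(0,0)$. For part~(a), the connected component of $(0,0)$ in $\Ga(r,s)$ is the coset of the subgroup $\la S\ra$ of $\ZZ_r\times\ZZ_s$; since $(1,1)+(1,-1)=(2,0)$ and $(1,1)+(-1,1)=(0,2)$, we have $\la S\ra=\la(2,0),(0,2),(1,1)\ra$. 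If at least one of $r,s$ is odd, then the appropriate one of $(2,0)$, $(0,2)$ already generates an entire factor and $\la S\ra=\ZZ_r\times\ZZ_s$, yielding connectedness. If both $r,s$ are even, then $\la S\ra=\{(i,j):i\equiv j\pmod 2\}=X^+$ is of index~$2$, so $\Ga(r,s)$ splits into the two components $[X^+]=\Ga^+(r,s)$ and $[X\setminus X^+]$.

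For part~(b), if $s$ (respectively $r$) is even then the parity of the second (respectively first) coordinate gives a proper $2$-colouring, since every edge changes that parity by $\pm1$. If both $r,s$ are odd, the cycle obtained by repeatedly adding $(1,1)$ has length $\lcm(r,s)$, which is odd, so $\Ga(r,s)$ contains an odd cycle and is not bipartite. If both $r,s$ are even, then $\Ga^+(r,s)$ is bipartite with parts determined by the common parity of the two coordinates.

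For part~(c), I would verify in each of the four cases that: (i) the group contains a vertex-regular subgroup (hence is vertex-transitive); (ii) the stabiliser of $(0,0)$ has order $2$; (iii) this stabiliser has exactly two orbits on the four neighbours of $(0,0)$; and (iv) no group element interchanges the endpoints of the edge $\{(0,0),(1,1)\}$. Items (i) and (iv) together give edge-transitivity (the two arc-orbits from (iii) then pair antiparallel on each edge), while (iii) rules out arc-transitivity, so together with (ii) we conclude $\frac12$-transitivity. For $G(r,s)$ and $G^+(r,s)$ the vertex-regular subgroups are $\la\mu,\nu\ra$ and $\la\mu^2,\mu\nu\ra$, with stabiliser $\la\sigma\ra$; for $H(r,s)$ and $H^+(r,s)$ the stabiliser is $\la\tau\ra$, and one uses the decomposition $H(r,s)=(M\times N').\la\tau\ra$ from Definition~\ref{def1} (and its analogue for $H^+$) to parameterise elements and perform the coordinate calculations. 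The edge-swap check fails in each case because writing out the conditions $g(0,0)=(1,1)$ and $g(1,1)=(0,0)$ leads to a congruence of the form $2\equiv0\pmod s$ or $2\equiv0\pmod r$, incompatible with $r,s\ge3$. The main obstacle is the $H$-case, where one first has to establish that $\nu\notin H(r,s)$ (so that $|H(r,s)|=2rs$) by identifying $N'$ as the kernel of the natural projection from $H(r,s)$ onto the induced dihedral action on the $i$-coordinate; once this normal form for elements of $H(r,s)$ is in hand, the orbit count and the edge-swap verification reduce to routine modular arithmetic.
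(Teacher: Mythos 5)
Your argument is correct, and in places it takes a cleaner route than the paper's. For (a) the paper walks explicit paths between the $N$-orbits $B_i$ and their even/odd halves, whereas you compute $\la S\ra=\la(2,0),(0,2),(1,1)\ra$ inside $\ZZ_r\times\ZZ_s$ and read off the components as its cosets; the two arguments are equivalent, yours being shorter. For (b) the paper disposes of the case $r,s$ both odd by noting that $|V\Ga|$ is odd (which implicitly uses that a connected vertex-transitive bipartite graph has parts of equal size), while your explicit odd cycle through the multiples of $(1,1)$, of length $\lcm(r,s)$, is more elementary and self-contained. For (c) your skeleton is the same as the paper's (vertex-regular subgroup, stabiliser of order $2$ with two orbits of size $2$ on the four neighbours), but the finishing move differs: the paper exhibits explicit elements ($\sigma\mu\nu^{-1}$ for $G$, $\tau\mu\sigma\nu$ for $H$) carrying the edge $\{(0,0),(1,1)\}$ to an edge in the other stabiliser-orbit, whereas you deduce edge-transitivity from the count of exactly two arc-orbits together with the non-reversibility of a single edge. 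That counting argument is valid: if no $g$ reverses $\{(0,0),(1,1)\}$, its two arcs lie in the two distinct arc-orbits, so every arc, and hence every edge, is $G$-equivalent to that edge, while the existence of two arc-orbits rules out arc-transitivity.

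One small inaccuracy to repair: your claim that the edge-swap check always dies on a congruence $2\equiv0\pmod r$ or $2\equiv0\pmod s$ is literally true only for $G(r,s)$ and $G^+(r,s)$. Every element of any of the four groups acts as $(i,j)\mapsto(\epsilon i+a,\epsilon' j+b)$, and for $r,s\ge3$ the swap conditions force $\epsilon=\epsilon'=-1$ and $a=b=1$, i.e.\ the unique candidate is $(i,j)\mapsto(1-i,1-j)=\tau\mu\nu$; for $H(r,s)$ this map is excluded not by a congruence but because $\nu\notin H(r,s)$. In the normal form for $H(r,s)$ one finds $b$ is even exactly when $\epsilon=\epsilon'$, which contradicts $\epsilon=\epsilon'=-1$, $b=1$. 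You flag this yourself, and your proposed fix (identify $N'$ as acting on the $j$-coordinate by even translations and odd reflections) is exactly what is needed, so this is a presentational slip rather than a gap.
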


\begin{proof}
The $N$-orbits in $X$ are the sets  $B_i:=\{(i,j) \mid j\in\ZZ_s\}$,
for $i\in\ZZ_r$. If $s$ is odd, then by the definition of $\Ga$, for each $i$, 
each vertex of $B_i$ is connected by a path in $\Ga$ to each vertex of $B_{i+1}$, and hence $\Ga$ is connected. 
Suppose then that $s$ is even and define $B_i^{{\rm even}} =\{(i,2j) \mid j\in\ZZ_s\}$ and 
$B_i^{{\rm odd}} = B_i\setminus B_i^{{\rm even}}$, for $i\in\ZZ_r$. 
Then the definition of $\Ga$ implies, for each $i$, 
that each vertex of $B_i^{{\rm odd}}$ (or $B_i^{{\rm even}}$) is connected by a 
path in $\Ga$ to each vertex of $B_{i+1}^{{\rm even}}$ (or $B_{i+1}^{{\rm odd}}$,
respectively). Therefore, if $r$ is odd, $\Ga$ is connected. On the other hand
if $r$ and $s$ are both even, then $\Ga$ is disconnected with connected components $[X^+]$ and $[X\setminus X^+]$.
In this case the induced subgraph $\Ga^+$ is connected. This proves (a).

If $r$ is even, then it follows from the discussion above that $\cup_{i\ \mbox{even}}B_i$
and $\cup_{i\ \mbox{odd}}B_i$ form the parts of a bipartition of $\Ga$, and similarly $\Ga$ 
is bipartite if $s$ is even (the parts of the bipartition being unions of $M$-orbits).
On the other hand if $r,s$ are both odd then $|V\Ga|$ is odd so $\Ga$ is not bipartite. 
If both $r,s$ are even then $\Ga^+$ is bipartite with bipartition $\{(i,j)|i,j\ \mbox{even}\}$,  
$\{(i,j)|i,j\ \mbox{odd}\}$. This proves (b).

It is straightforward to check that each of $\mu, \nu, \sigma, \tau$ preserves 
the edge set of $\Ga=\Ga(r,s)$, so that $G:=G(r,s)$ and $H:=H(r,s)$ lie in $\Aut(\Ga)$. 
Also, if both $r, s$ are even, then $\mu^2,\mu\nu,\sigma, \tau$ all leave $X^+$ invariant, 
so $G^+:=G^+(r,s)$ and $H^+:=H^+(r,s)$ are contained in $\Aut(\Ga^+)$, where $\Ga^+=\Ga^+(r,s)$. 
The subgroup $\la\mu, \nu\ra$ of $G$ acts regularly on $X$ and so also, if $s$ is even, 
does the subgroup $\la\mu,\nu^2,\tau\sigma\nu\ra$ of $H$. Hence $G$ and $H$ are vertex-transitive.
The stabiliser in $G$, or in $H$, of the vertex $x=(0,0)$ is $\la \sigma\ra$, or $\la \tau\ra$, respectively.  
The element $\sigma$ acts on the four neighours of $x$ by 
interchanging $(1,1)$ and $(-1,1)$ and interchanging $(-1,-1)$ and $(1,-1)$, and the element
$\sigma\mu\nu^{-1}\in G$ maps the edge $\{x,(1,1)\}$ to the edge $\{x,(1,-1)\}$. Thus $G$ is edge-transitive but not arc-transitive on $\Ga$.
If $s$ is even, then $\tau$ interchanges  $(1,1)$ and $(-1,-1)$ and interchanges $(1,-1)$ and $(-1,1)$, and the element
$\tau\mu\sigma\nu\in H$ maps the edge $\{x,(1,1)\}$ to the edge $\{x,(-1,1)\}$. So $H$ also is edge-transitive but not arc-transitive on $\Ga$.
Similar arguments show firstly that (i) the subgroup $\la\mu^2,\mu\nu\ra$ of $G^+$ acts regularly on $X^+$, 
that $G^+_x=\la \sigma\ra$, and that $G^+$ is $\frac{1}{2}$-transitive on $\Ga^+$; and secondly that (ii) 
the subgroup $\la\mu^2,\sigma\mu\nu\ra$ of $H^+$ acts regularly on $X^+$, 
that $H^+_x=\la \tau\ra$, and that $H^+$ is $\frac{1}{2}$-transitive on $\Ga^+$. This proves part (c).
 \end{proof}

First we define an edge-orientation on the graphs in Definition~\ref{def1} which leads to independent cyclic
normal quotients, one oriented and the other not. We note that this family of oriented graphs was studied in 
\cite[p.50-51. Props. 3.2, 3.3]{Mar2} and \cite[p.159, Proposition 3.3]{MN}, where they were characterised by
properties of their alternating cycles.

\begin{construction}\label{con1}
{\rm
Let $r,s, \Ga(r,s), \Ga^+(r,s)$ be as in Definition~\ref{def1}.
Define an edge-orientation of $\Ga(r,s)$ such that $(i,j)\rightarrow (i\pm1,j+ 1)$ for each $(i,j)\in X$. 
If $r, s$ are both even this restricts to an edge-orientation of $\Ga^+(r,s)=[X^+]$.
}
\end{construction}

\begin{lemma}\label{lem:con1}
Let $\Ga=\Ga(r,s), G=G(r,s), N, N_2, M,\tilde M$,  $\Ga^+=\Ga^+(r,s), G^+=G^+(r,s), N^+, M^+, \tilde M^+$ be as in Definition~$\ref{def1}$,
with edge-orientations of $\Ga,\ \Ga^+$ as in Construction~$\ref{con1}$. 
\begin{enumerate}
\item[(a)] If at least one of $r, s$ is odd, then $(\Ga,G)\in\OG(4)$ 
and $\Ga_N=C_r, \Ga_{\tilde M}=\Ga_M= C_s$ are independent cyclic normal quotients; 
$\Ga_N$ is $G$-unoriented while $\Ga_M$ is $G$-oriented. 
Moreover if $s$ is even then $\Ga_{N_2}=C_{2r}$ and is $G$-unoriented.  
\item[(b)] If  both $r,s$ are even, then $(\Ga^+,G^+)\in\OG(4)$
and $\Ga^+_{N^+}=C_r, \Ga^+_{\tilde M^+}=\Ga^+_{M^+}=C_s$ are independent cyclic normal quotients;
$\Ga^+_{N^+}$ is $G$-unoriented while $\Ga^+_{M^+}$ is $G$-oriented.
\end{enumerate}
\end{lemma}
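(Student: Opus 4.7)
The approach is to carry out explicit orbit and adjacency computations using the coordinate parametrisation of $\Ga$ by $\ZZ_r\times\ZZ_s$. The first step is to verify that each generator of $G$ preserves the Construction~\ref{con1} orientation: $\mu$ and $\nu$ are coordinate translations and preserve the rule $(i,j)\to(i\pm 1,j+1)$ trivially, while $\sigma:(i,j)\mapsto(-i,j)$ sends an out-pair $(i,j)\to(i+\epsilon,j+1)$ to $(-i,j)\to(-i-\epsilon,j+1)$, still of the required form. Together with the $\tfrac12$-transitivity established in Lemma~\ref{lem:def1}(c), this places $(\Ga,G)$ in $\OG(4)$; the analogous check on $\mu^2,\mu\nu,\sigma$ acting on $X^+$ handles $(\Ga^+,G^+)$ for part~(b).

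Next I would identify the normal quotients by orbit computation. The $N$-orbits are the columns $B_i=\{i\}\times\ZZ_s$; each vertex $(i,j)$ has its out-neighbour $(i+1,j+1)$ and its in-neighbour $(i+1,j-1)$ both in $B_{i+1}$, so edges run in both directions between adjacent columns, yielding $\Ga_N\cong C_r$ which is $G$-unoriented. The $M$-orbits are the rows $C_j=\ZZ_r\times\{j\}$, and every edge from $C_j$ enters $C_{j+1}$, so $\Ga_M\cong C_s$ is $G$-oriented; since $\sigma$ fixes the $j$-coordinate, $\tilde M=\la\mu,\sigma\ra$ has the same orbits as $M$, giving $\Ga_{\tilde M}=\Ga_M$. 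For independence, Lemma~\ref{lem:N}(a) yields $\tilde N=N=\la\nu\ra$, which meets $\tilde M=\la\mu,\sigma\ra$ trivially because $N$ acts faithfully on the $j$-coordinate while $\tilde M$ fixes it. Hence $K=\tilde N\cap\tilde M=1$ and $\Ga_K=\Ga$ is $4$-valent, so not a cycle.

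The main obstacle I expect is verifying the structure of $\Ga_{N_2}$ in the ``moreover'' clause of~(a), when $s$ is even (so $r$ is odd by hypothesis). Each column $B_i$ splits into two $N_2$-orbits $B_i^{\rm even}$ and $B_i^{\rm odd}$ according to the parity of $j$, giving $2r$ vertices; a vertex of $B_i^{\rm even}$ has both neighbours in $B_{i\pm 1}^{\rm odd}$, so $\Ga_{N_2}$ is $2$-regular. The key point is to track how the parity superscript evolves as one walks $B_0^{\rm even}\to B_1^{\rm odd}\to B_2^{\rm even}\to\cdots$ around $\ZZ_r$: after $r$ steps the $i$-index returns to $0$ but, because $r$ is odd, the parity superscript has flipped to odd, so a further $r$ steps are needed to close up. This forces a single Hamiltonian cycle and gives $\Ga_{N_2}\cong C_{2r}$, unoriented by the same two-directional edge argument as for $\Ga_N$.

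Part~(b) proceeds along identical lines after restricting to $X^+$: the $N^+$-orbits consist of $B_i^{\rm even}$ for $i$ even together with $B_i^{\rm odd}$ for $i$ odd (a total of $r$ orbits, strung together into a single cycle $C_r$ because $r$ is even), and the $M^+$-orbits $C_j^{\rm even}$ (for $j$ even) and $C_j^{\rm odd}$ (for $j$ odd) form $s$ rows giving the oriented cycle $C_s$. The orientation analysis and the computation $\tilde N^+\cap\tilde M^+=\la\nu^2\ra\cap\la\mu^2,\sigma\ra=1$ yielding $\Ga^+_K=\Ga^+$ run exactly as in~(a), completing the proof.
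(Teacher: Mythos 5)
Your proposal is correct and follows essentially the same route as the paper: check that the generators preserve the orientation, invoke Lemma~\ref{lem:def1} for $\frac12$-transitivity, compute the $N$-, $M$-, $N_2$- (and $+$-restricted) orbits explicitly, read off the quotient cycles and their orientations from which orbits contain out- versus in-neighbours, and get independence from $\tilde N\cap\tilde M=1$ via Lemma~\ref{lem:N}(a). The only point left implicit is that membership in $\OG(4)$ also needs connectedness of $\Ga$ (resp.\ $\Ga^+$), which is exactly what Lemma~\ref{lem:def1}(a) supplies under the parity hypotheses of each part.
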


\begin{proof} 
By Lemma~\ref{lem:def1}, $G, G^+$ are $\frac{1}{2}$-transitive on $\Ga,\ \Ga^+$  respectively,  
and it is straightforward to check that the edge-orientation of Construction~\ref{con1} is preserved in each case. 
If at least one of $r, s$ is odd, then $\Ga$ is connected, by Lemma~\ref{def1} and hence $(\Ga, G)\in\OG(4)$. 
Similarly if both  $r, s$ are even, then $\Ga^+$ is connected, by Lemma~\ref{def1} and hence $(\Ga^+, G^+)\in\OG(4)$.

Each of $N, M, \tilde M$ is an intransitive normal 
subgroup of $G$. The $M$-orbits in $X$ are $C_j=\{(i,j) \mid i\in\ZZ_r\}$, for $j\in\ZZ_s$, and 
are the same as the orbits of $\tilde M$, and the $N$-orbits in $X$ are the subsets $B_i=\{(i,j) \mid j\in\ZZ_s\}$,
for $i\in\ZZ_r$. Also each of  $N^+, M^+, \tilde M^+$ is a normal  
subgroup of $G^+$ and is intransitive on $X^+$. The $M^+$-orbits in $X^+$ are $C_j^+:=C_j\cap X^+$, 
for $j\in\ZZ_s$, and are the same as the $\tilde M^+$-orbits in $X^+$, and the $N^+$-orbits in $X^+$ 
are the subsets $B_i^+:=B_i\cap X^+$ for $i\in\ZZ_r$.

By the definition of $\Ga$, for each $x=(i,j)\in B_i$, each of $B_{i-1}$ and $B_{i+1}$ contains 
one out-neighbour and one in-neighbour of $x$, and it follows 
(as in \cite[Proposition 3.1(b)]{janc1}) that $\Ga_N$ is a connected $G$-arc transitive graph 
of valency $2$ and order $rs/|B_i|=r$, so $\Ga_N=C_r$ and is $G$-unoriented. Similarly, for each $x=(i,j)\in C_j$, 
both out-neighbours of $x$ lie in $C_{j+1}$ and both in-neighbours of $x$ lie in $C_{j-1}$, so
(as in \cite[Proposition 3.1(a)]{janc1}) $\Ga_M=\Ga_{\tilde M}$ is a connected, $G$-oriented, 
$G$-edge transitive graph of valency $2$ and order $rs/|C_j|=s$, that is, $\Ga_M=C_s$ and is $G$-oriented.
If in addition $s$ is even (so $r$ is odd) then each $N$-orbit $B_i$ is the union of two 
$N_2$-orbits, namely $B_i^{even}=\{(i,j)| j\ \mbox{even}\}$ and
$B_i^{odd}=\{(i,j)| j\ \mbox{odd}\}$. For $j$ even, the vertex $x=(i,j)$ has one out-neighbour in each of 
$B_{i-1}^{odd}$ and $B_{i+1}^{odd}$, and for $j$ odd, $(i,j)$ has one out-neighbour in each of 
$B_{i-1}^{even}$ and $B_{i+1}^{even}$. Since $r$ is odd, it follows that $\Ga_{N_2}$ is the cycle $C_{2r}$ and is $G$-unoriented.  
Part (a) now follows from Definition~\ref{def:indept}, since $N\cap \tilde M=1$. 
Part (b) also follows since the $B_i^+=B_i\cap X^+, C_j^+=C_j\cap X^+$, $\tilde M^+=\tilde M\cap G^+, M^+=M\cap G^+$, $N^+=N\cap G^+$ and $\tilde M^+\cap N^+=1$. 
\end{proof}

\subsection{Graphs with independent unoriented cyclic normal quotients}

We now give two constructions of oriented graphs with independent unoriented 
cyclic normal quotients. 
The graphs in the first construction are the 
graphs $\Ga(r,s)$ and $\Ga^+(r,s)$ of Definition~\ref{def1} with a different edge-orientation 
from that in Construction~\ref{con1}.  

\begin{construction}\label{con2c}
{\rm
Let $r, s$ be positive integers, with $r\geq 3$, and $s$ even, $s\geq4$,
and recall the graph $\Ga(r,s)$ of Definition~\ref{def1} with vertex set $X=\ZZ_r\times\ZZ_s$. 
Define an edge-orientation as follows. 
\begin{align*}
&\mbox{if $j$ is even then}\qquad  &(i,j)\rightarrow (i+1,j+ 1)\quad &\mbox{and}\ 
&(i,j)\rightarrow (i-1,j- 1),\\
&\mbox{if $j$ is odd then}\qquad  &(i,j)\rightarrow (i+1,j- 1)\quad &\mbox{and}\ 
&(i,j)\rightarrow (i-1,j+ 1).
\end{align*}
In other words,
$
\mbox{$(i,j)\rightarrow (i+1,j+ (-1)^j)$\quad and \quad $(i,j)\rightarrow (i-1,j- (-1)^j)$.} 
$
Note that this orientation is 
well-defined since elements of $\ZZ_{s}$ have a well-defined parity  as $s$ is even. 
If $r$ and $s$ are both even, then this edge-orientation restricts to an edge-orientation of $\Ga^+(r,s)=[X^+]$. 
}
\end{construction}

\begin{lemma}\label{lem:con2c}
Let  $s$ be even, $s\geq4$, and let $r\geq3$. Let $\Ga=\Ga(r,s), H=H(r,s)$, $N', M$,
and, if $r$ is even, also $\Ga^+=\Ga^+(r,s), H^+=H^+(r,s), N^+, M^+,$ be as in Definition~$\ref{def1}$,
with edge-orientations of $\Ga,\ \Ga^+$ as in  Construction~$\ref{con2c}$.
\begin{enumerate}
\item[(a)] If $r$ is odd, then $H$ preserves the edge-orientation of $\Ga$, 
$(\Ga,H)\in\OG(4)$, and $\Ga_{N'}=C_r$, $\Ga_M\cong C_s$ are independent, $H$-unoriented, cyclic normal quotients.
\item[(b)] If $r$ is even, then $H^+$ preserves the edge-orientation of $\Ga^+$, 
$(\Ga^+,H^+)\in\OG(4)$, and $\Ga^+_{N^+}=C_r$, $\Ga^+_{M^+}\cong C_s$ are independent, $H^+$-unoriented, cyclic normal quotients. 
\end{enumerate}
\end{lemma}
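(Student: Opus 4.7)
My plan is, for each part separately, to verify four things: (i) the generators of $H$ (or $H^+$) preserve the edge-orientation of Construction~\ref{con2c}; (ii) the subgroups $N', M$ (or $N^+, M^+$) are normal in the relevant group and their orbits can be identified with the columns and rows of $X$ (or $X^+$); (iii) the resulting normal quotients are cycles of the claimed lengths and both are $H$-unoriented (respectively $H^+$-unoriented); and (iv) the two quotients are independent.

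For part (a), I would first check by direct computation that each of the generators $\mu$, $\sigma\nu$, $\tau$ of $H$ preserves the orientation rule $(i,j)\to (i\pm 1, j\pm(-1)^j)$. The map $\mu$ fixes the second coordinate, so preserves the parity of $j$ and hence the rule; $\sigma\nu$ flips $i$ and increments $j$, which swaps the two orientation rules in a consistent manner; and $\tau:(i,j)\mapsto(-i,-j)$ preserves parity of $j$ since $s$ is even, and again swaps the two rules consistently. Combined with Lemma~\ref{lem:def1}(a),(c), which give connectivity of $\Ga$ (as $r$ is odd) and $\tfrac{1}{2}$-transitivity of $H$ on $\Ga$, this yields $(\Ga,H)\in\OG(4)$.

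Next I would verify that $N'=\la\nu^2,\tau\sigma\nu\ra$ and $M=\la\mu\ra$ are normal in $H$. Since $H=(M\times N').\la\tau\ra$, it suffices to check the conjugation action of $\tau$: direct computation gives $\mu^\tau=\mu^{-1}$, $(\nu^2)^\tau=\nu^{-2}$, and $(\tau\sigma\nu)^\tau=\sigma\nu\tau=(\tau\sigma\nu)\nu^{-2}$, all lying in the respective subgroups. The $N'$-orbits on $X$ are the columns $B_i=\{(i,j):j\in\ZZ_s\}$, since $N'$ is dihedral of order $s$ acting on each $B_i$ via $j\mapsto j+2$ and $j\mapsto 1-j$; and the $M$-orbits are the rows $C_j=\{(i,j):i\in\ZZ_r\}$. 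From any $(i,j)$ exactly two of its four neighbours lie in $B_{i+1}$ and two in $B_{i-1}$ (and likewise for $C_{j\pm 1}$), so by \cite[Proposition 3.1]{janc1} the quotients $\Ga_{N'}$ and $\Ga_M$ are connected $2$-regular graphs on $r$ and $s$ vertices, hence $C_r$ and $C_s$. To see both are $H$-unoriented, I observe that from $(0,0)$ the out-neighbour $(1,1)$ lies in $B_1\cap C_1$ while the in-neighbour $(1,-1)$ lies in $B_1\cap C_{-1}$; so edges occur in both orientations between $B_0$ and $B_1$, and between $C_0$ and $C_1$. Finally, by Lemma~\ref{lem:N}(a), $\tilde{N'}=N'$ and $\tilde M=M$; since $N'$ acts trivially on the first coordinate while $M$ acts trivially on the second, $N'\cap M=1$, so $\Ga_{N'\cap M}=\Ga$ has valency $4$ and is not a cycle. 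Hence $\Ga_{N'}$ and $\Ga_M$ are independent.

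Part (b) runs in parallel on $\Ga^+$: the generators $\mu^2,\sigma\mu\nu,\tau$ of $H^+$ all stabilise $X^+$ and preserve the orientation by the same parity analysis; $N^+=\la\nu^2\ra$ has $r$ orbits $B_i^+=B_i\cap X^+$ on $X^+$ and $M^+=\la\mu^2\ra$ has $s$ orbits $C_j^+=C_j\cap X^+$; the quotients $\Ga^+_{N^+}$ and $\Ga^+_{M^+}$ are $C_r$ and $C_s$ respectively by the same $2$-regular argument, both $H^+$-unoriented by an identical edge check, and independent since $N^+\cap M^+=1$. The main obstacle I anticipate is keeping careful track of the dihedral structure of $N'$ and the two parity cases in the orientation rule; once these are in hand, the rest follows by routine verification together with Lemmas~\ref{lem:def1} and~\ref{lem:N}.
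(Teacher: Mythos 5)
Your proposal is correct and follows essentially the same route as the paper's proof: verify orientation-preservation of the generators, invoke Lemma~\ref{lem:def1} for connectivity and $\frac{1}{2}$-transitivity, identify the $N'$- and $M$-orbits with columns and rows, count out- and in-neighbours in adjacent orbits to obtain unoriented cycles via \cite[Proposition 3.1]{janc1}, and deduce independence from Lemma~\ref{lem:N} together with $N'\cap M=1$. The extra details you supply (the conjugation check for normality, the dihedral action of $N'$ on each column) are points the paper leaves as routine verification.
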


\begin{proof}
It is easy to check that each of 
$\mu,\nu^2,\tau$ preserves the edge-orientation of $\Ga$, and with a little care, that $\sigma\nu$ does also. Thus $H$ preserves the edge-orientation of $\Ga$, and
$H^+$ preserves the edge-orientation of $\Ga^+$.

(a) Suppose that $r$ is odd. Then by Lemma~\ref{lem:def1}, $\Ga$ is connected and $H$ acts $\frac{1}{2}$-transitively, so $(\Ga,H)\in\OG(4)$.
Now $M$, $N'$ are normal subgroups of $H$. The $N'$-orbits are $B_i=\{(i,j) \mid j\in\ZZ_s\}$, for $i\in\ZZ_r$. 
Let $x'=(i,j)\in B_i$. Then each of $B_{i-1}$ and $B_{i+1}$ contains 
one out-neighbour and one in-neighbour of $x'$. Hence, 
as in \cite[Proposition 3.1(b)]{janc1}, $\Ga_{N'}$ is a connected $H$-arc transitive graph 
of valency $2$ and order $rs/|B_i|=r$, so $\Ga_{N'}\cong C_r$ and is $H$-unoriented.
Similarly the $M$-orbits are 
$C_j=\{(i,j) \mid i\in\ZZ_r\}$, for $j\in\ZZ_s$, and for
each $x'=(i,j)\in C_j$, each of $C_{j-1}$ and $C_{j+1}$ contains 
one out-neighbour and one in-neighbour of $x'$, and so $\Ga_M\cong C_s$  
and is $H$-unoriented. By Lemma~\ref{lem:N}, 
$N', M$ are the kernels of the $H$-actions on $\Ga_{N'}, \Ga_M$, respectively.
Since $N'\cap M=1$, the quotients $\Ga_{N'},\Ga_{M}$ are independent. This proves part (a).

(b) Suppose now that $r$ is even, $r\geq4$. Then by Lemma~\ref{def1}, $\Ga^+$ is connected and $H^+$ acts $\frac{1}{2}$-transitively, so $(\Ga^+,H^+)\in\OG(4)$.
Now $M^+$, $N^+$ are normal subgroups of $H^+$. The $N^+$-orbits in $X^+$ (with $X^+$ as in Definition ~\ref{def1}) are $B_i\cap X^+$, for $i\in\ZZ_r$. 
Let $x'=(i,j)\in B_i\cap X^+$. Then each of $B_{i-1}\cap X^+$ and $B_{i+1}\cap X^+$ contains 
one out-neighbour and one in-neighbour of $x'$. Hence, 
as in \cite[Proposition 3.1(b)]{janc1}, $\Ga^+_{N^+}$ is a connected $H^+$-arc transitive graph 
of valency $2$ and order $|X^+|/|B_i\cap X^+|=r$, so $\Ga^+_{N^+}\cong C_r$ and is $H^+$-unoriented.
Similarly the $M^+$-orbits are 
$C_j\cap X^+$, for $j\in\ZZ_s$, and for
each $x'=(i,j)\in C_j\cap X^+$, each of $C_{j-1}\cap X^+$ and $C_{j+1}\cap X^+$ contains 
one out-neighbour and one in-neighbour of $x'$, and so $\Ga^+_{M^+}\cong C_s$ and is $H^+$-unoriented. By Lemma~\ref{lem:N}, 
$N^+, M^+$ are the kernels of the $H^+$-actions on $\Ga^+_{N^+}, \Ga^+_{M^+}$, respectively.
Since $N^+\cap M^+=1$, the quotients $\Ga^+_{N^+},\Ga^+_{M^+}$ are independent. 
\end{proof}

The graphs in the final construction are standard double covers
of the graphs $\Ga(r,s)$ of Definition~\ref{def1}. 

\begin{definition}\label{def:dc}{\rm
The \emph{standard double cover} of a graph $\Ga$ with vertex set $X$ is the graph $\Ga_2$
with vertex set $X_2=\{x_\delta| x\in X, \delta\in\ZZ_2\}$ such that $\{x_\delta, y_{\delta'}\}$ 
is an edge if and only if $\delta\ne\delta'$ and $\{x,y\}$ is an edge of $\Ga$.
}
\end{definition}

Note that $\Ga_2$ has the same valency as $\Ga$ and twice the number of vertices.

\begin{construction}\label{con2a}
{\rm
Let $r, s$ be positive integers, with $r, s\geq 3$, and 
let $\Ga_2(r,s)$ be the standard double cover of the graph 
$\Ga(r,s)$ of Definition~\ref{def1}, so $\Ga_2(r,s)$ has vertex set $X_2$ as in Definition~\ref{def:dc}. 
Define an orientation on the 
edges of $\Ga_2(r,s)$ as follows. 
\begin{align*}
&  (i,j)_0\rightarrow (i+1,j+ 1)_1\ &\mbox{and}&  &(i,j)_0\rightarrow (i-1,j- 1)_1\\
&  (i,j)_1\rightarrow (i+1,j- 1)_0\ &\mbox{and}&  &(i,j)_1\rightarrow (i-1,j+ 1)_0.
\end{align*}
In other words,
$
\mbox{$(i,j)_\delta\rightarrow (i+1,j+ (-1)^\delta)_{\delta+1}$ and  $(i,j)_\delta\rightarrow (i-1,j- (-1)^\delta)_{\delta+1}$.} 
$
We extend the automorphisms defined in Definition~\ref{def1} to maps on $X_2$ as follows. For $(i,j)_\delta\in X_2$,
\begin{align*}
&\mu\,: (i,j)_\delta\mapsto (i+1,j)_\delta, &\nu &: (i,j)_\delta\mapsto (i,j+1)_\delta, \\ 
&\sigma\,: (i,j)_\delta\mapsto (i,-j)_{\delta+1}, &\tau &: (i,j)_\delta\mapsto (-i,-j)_\delta
\end{align*}
and let $G_2(r,s)=\la\mu,\nu,\sigma, \tau\ra=M\times N$, where 
$M=\la\mu,\sigma\tau\ra\cong D_{2r}$, and $N=\la\nu,\sigma\ra\cong D_{2s}$.
%
}
\end{construction}

\begin{lemma}\label{lem:con2a}
Let $r, s, \Ga=\Ga_2(r,s), G=G_2(r,s)$ be as in Construction~$\ref{con2a}$. Then
$G$ preserves the edge-orientation, 
$\Ga$ is connected if and only if $r, s$ are both odd, and in this case $(\Ga,G)\in\OG(4)$, 
and $\Ga_N=C_r$, $\Ga_M=C_s$   are independent cyclic normal quotients, and each is $G$-unoriented.
\end{lemma}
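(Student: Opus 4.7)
The plan is to establish each of the four assertions in turn: that $G$ preserves the edge-orientation, the connectivity criterion on $(r,s)$, membership of $(\Ga,G)$ in $\OG(4)$ when both $r,s$ are odd, and the independent cyclic normal quotient structure.

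First, I would verify that each generator of $G=\la\mu,\nu,\sigma,\tau\ra$ preserves the edge-orientation of Construction~\ref{con2a}. For $\mu,\nu,\tau$ this is immediate, as none of them alters $\delta$ and each acts compatibly on the remaining coordinates with the orientation rule $(i,j)_\delta\to(i\pm1,j\pm(-1)^\delta)_{\delta+1}$. The one case requiring care is $\sigma$: applying $\sigma$ to the out-arc $(i,j)_\delta\to(i+1,j+(-1)^\delta)_{\delta+1}$ produces $(i,-j)_{\delta+1}\to(i+1,-j-(-1)^\delta)_\delta$, and since $-(-1)^\delta=(-1)^{\delta+1}$, this is precisely the out-arc rule applied at $(i,-j)_{\delta+1}$.

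For connectivity, I would invoke the standard fact that the standard double cover $\Ga_2$ of a graph is connected if and only if the base graph is connected and non-bipartite. Combined with Lemma~\ref{lem:def1}(a),(b), which states that $\Ga(r,s)$ is connected iff at least one of $r,s$ is odd and bipartite iff at least one is even, this shows that $\Ga=\Ga_2(r,s)$ is connected iff both $r$ and $s$ are odd.

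Now assume both $r,s$ are odd. To establish $(\Ga,G)\in\OG(4)$ I would check that $\la\mu\ra\times\la\nu,\sigma\ra$ is a regular subgroup of $G$ of order $r\cdot 2s=2rs=|V\Ga|$: it is transitive since $(0,0)_0$ can be sent to any $(i,j)_\delta$ by first applying $\sigma^\delta$ and then $\mu^i\nu^j$, and since its order equals $|V\Ga|$ it is regular. Then $G$ is vertex-transitive with $|G_x|=2$, and since $\tau$ fixes $x=(0,0)_0$, we have $G_x=\la\tau\ra$. The action of $\tau$ on the four neighbours of $x$ swaps the out-neighbours $(1,1)_1,(-1,-1)_1$ and separately swaps the in-neighbours $(1,-1)_1,(-1,1)_1$; hence $G$ is edge-transitive but not arc-transitive, so $(\Ga,G)\in\OG(4)$.

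Finally, for the normal quotient analysis I would identify the $N$-orbits as the sets $\{(i,j)_\delta\mid j\in\ZZ_s,\delta\in\ZZ_2\}$, parametrised by $i\in\ZZ_r$ and each of size $2s$; the $M$-orbits are parametrised by $j\in\ZZ_s$ and each of size $2r$. Tracking the four neighbours of $x=(0,0)_0$ shows that each of the two adjacent $N$-orbits (indexed by $i=\pm1$) contains exactly one out-neighbour and one in-neighbour of $x$; by \cite[Proposition 3.1(b)]{janc1} this gives $\Ga_N\cong C_r$ and $G$-unoriented, and the symmetric argument yields $\Ga_M\cong C_s$ and $G$-unoriented. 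Independence then follows from Definition~\ref{def:indept}: Lemma~\ref{lem:N}(a) gives $\tilde N=N$ and $\tilde M=M$, and $M\cap N=1$ since every element of $M$ fixes the $j$-coordinate while every element of $N$ fixes the $i$-coordinate; hence $K=\tilde N\cap\tilde M=1$ and $\Ga_K=\Ga$ is $4$-valent, hence not a cycle. I expect the main bookkeeping hurdle to be the neighbour-orbit analysis, especially confirming that each adjacent $N$-orbit (and each adjacent $M$-orbit) contributes one in- and one out-neighbour, since this is the exact feature that forces both quotients to be $G$-unoriented and ultimately drives the independence conclusion.
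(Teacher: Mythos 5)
Your proposal is correct and follows essentially the same route as the paper's proof: direct verification that the generators preserve the orientation (with the same care needed for $\sigma$), the standard-double-cover connectivity criterion combined with Lemma~\ref{lem:def1}, the regular subgroup $\la\mu,\nu,\sigma\ra$ with $G_x=\la\tau\ra$ swapping the two out-neighbours and the two in-neighbours, and the neighbour-orbit count giving $C_r$, $C_s$ unoriented with independence from $M\cap N=1$ via Lemma~\ref{lem:N}. (Incidentally, your in-neighbour pair $(1,-1)_1,(-1,1)_1$ corrects a small typo in the paper's own proof.)
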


\begin{proof}
Careful but straightforward checking shows that each of the generators of $G$ 
preserves edges, and that each preserves the edge orientation of $\Ga$. 
It is well known and easily proved that a standard double cover of a graph $\Sigma$ is
connected if and only if $\Sigma$ is connected and not bipartite. It follows therefore from Lemma~\ref{lem:def1}
that $\Ga$ is connected if and only if $r,s$ are both odd. Suppose this is the case. 

The subgroup $\la\mu,\nu,\sigma\ra$ 
is normal in $G$ of index $2$, and is regular on vertices. The stabiliser $G_x$ 
of the vertex $x=(0,0)_0$ is $\la\tau\ra$, which interchanges the out-neighbours $(1,1)_1$ and $(-1,-1)_1$,
and the in-neighbours $(1,-1)_1$ and $(1,-1)_1$ of $x$. Thus $G$ is $\frac{1}{2}$-transitive on $\Ga$
and preserves the edge-orientation so $(\Ga,G)\in\OG(4)$.
 
The $N$-orbits in $X_2$ are the subsets $B_i=\{(i,j)_\delta|j\in\ZZ_s,\delta\in\ZZ_2\}$, for $i\in\ZZ_r$. 
Each vertex in $B_i$ has one out-neighbour and one in-neighbour in $B_{i+\varepsilon}$, for $\varepsilon=\pm 1$.
It follows that $\Ga_N=C_r$ and is $G$-unoriented. Similarly the $M$-orbits in $X_2$ are the subsets 
$C_j=\{(i,j)_\delta|i\in\ZZ_r,\delta\in\ZZ_2\}$, for $j\in\ZZ_s$. 
Each vertex in $C_j$ has one out-neighbour and one in-neighbour in $C_{j+\varepsilon}$, for $\varepsilon=\pm 1$,
and hence $\Ga_M=C_s$ and is $G$-unoriented. By Lemma~\ref{lem:N}, $N, M$ are the kernels of the actions of $G$ on $\Ga_N,\Ga_M$
respectively, and since $M\cap N=1$ it follows that $\Ga_N, \Ga_M$ are independent. 
\end{proof}

\subsection{Proof of Theorem~\ref{thm1}}\label{sub:thm1} Let  $n$ be a positive integer.

\medskip\noindent
(a)\quad  Let $r, \Ga, G$ be as in Construction~\ref{ex:najat}. Choose an odd prime $p$ and set $r=p^n$.
By \cite[Lemma 3.6]{janc1}, $(\Ga,G)$ is basic in $\OG(4)$, and  
by Lemma~\ref{lem:najat}, its normal quotients relative to intransitive, nontrivial normal subgroups of $G$ are 
precisely the $G$-oriented quotients $\Ga_{N(c)}=C_c$, for $c\mid r$ and $c>1$.
There are precisely $n$ possibilities, namely for $c=p^i$ with $i= 1,\ldots, n$.

\medskip\noindent
(b)\quad Again let $r, \Ga, G$ be as in Construction~\ref{ex:najat}, and this time choose $n$ distinct 
odd primes $p_1<p_2<\dots<p_n$ and take $r=\prod_ip_i$. 
Again $(\Ga,G)$ is basic in $\OG(4)$ of cycle type, by \cite[Lemma 3.6]{janc1}, and  
by Lemma~\ref{lem:najat}, we have $G$-oriented normal quotients $\Ga_{N(p_i)}=C_{p_i}$ relative to $N(p_i)$, for $i= 1,\ldots, n$.

\medskip\noindent
(c)\quad  
Choose $2n$ odd primes $p_1<p_2<\dots<p_n$ and $q_1<q_2<\dots<q_n$
(where some $p_i$, $q_j$ may be equal), and take $r=\prod_ip_i$ and $s=\prod_iq_i$ in 
Construction~\ref{con1}. By Lemma~\ref{lem:con1}, $(\Ga,G) \in\OG(4)$, and  
$(\Ga,G)$ has a cyclic $G$-oriented normal quotient $\Ga_{N}=C_{r}$, and a cyclic $G$-unoriented normal quotient $\Ga_M=C_s$. For $i= 1,\ldots, n$, 
consider $N_{p_i}=\la\nu^{p_i}\ra$ and $M_{q_i}=\la\mu^{q_i}\ra$, as in Definition~\ref{def1},  and note that 
$N_{p_i}\times M$ and $N\times M_{q_i}$ are both normal in $G$ and intransitive 
on vertices. The normal quotients $\Ga_{N_{p_i}\times M}$ and $\Ga_{N\times M_{q_i}}$ 
are isomorphic to quotients of $\Ga_M$ and $\Ga_N$, and are in fact isomorphic to $C_{p_i}$ and $C_{q_i}$, respectively,
with the former $G$-oriented and the latter $G$-unoriented.
This completes the proof.

\section{Proof of Theorem~\ref{thm3}}\label{sec:thm3}

In this section we analyse the structure of pairs  $(\Ga,G)\in\OG(4)$ with independent cyclic normal quotients. 
The first lemma yields a proof of parts (a) and (b) of Theorem~\ref{thm3}.  

\begin{lemma}\label{lem:NM2}
Let $(\Ga,G)\in\OG(4)$, and suppose that $\Ga_N$, $\Ga_M$ are independent cyclic normal quotients, where $N, M$ consists of all 
elements of $G$ fixing setwise each $N$-orbit, or each $M$-orbit, respectively. Let $\ovGa=\Ga_{N\cap M}, \ov G = G/(N\cap M)$,
$\ov N=N/(N\cap M)$ and $\ov M=M/(N\cap M)$.
\begin{enumerate}
\item[(a)] Then one of $\Ga_N, \Ga_M$ is $G$-unoriented, say $\Ga_N=C_r$
for some $r\geq3$, and the other  $\Ga_M=C_s$, for some $s\geq3$, may be $G$-oriented or $G$-unoriented.
\item[(b)] The quotient $(\ovGa, \ov G)\in\OG(4)$ has independent cyclic normal quotients
$\ovGa_{\ov N}\cong C_r$ and $\ovGa_{\ov M}\cong \Ga_M$
such that $\ov N\cap\ov M=1$, and $(\Ga,G)$ is a 
normal cover of $(\ovGa, \ov G)$. 
\item[(c)] If $N\cap M=1$, then the map $\varphi:g\mapsto (g^{\Ga_N},g^{\Ga_M})$ 
defines a group monomorphism from $G$ to $D_{2r}\times A$, such that $G\varphi\pi_1= D_{2r}, 
G\varphi\pi_2=A$, where $\pi_1, \pi_2$ are the natural projection maps of $D_{2r}\times A$ on $D_{2r}, A$ respectively, and  $A=D_{2s}$ or $Z_s$ 
according as $\Ga_M$ is $G$-unoriented or $G$-oriented, respectively. 
\end{enumerate}
\end{lemma}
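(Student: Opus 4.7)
For part (a), the plan is to apply Lemma~\ref{lem:NM} directly. That lemma shows that if both $\Ga_N$ and $\Ga_M$ were $G$-oriented, then $\Ga_{N\cap M}$ would necessarily be a cycle; but this contradicts the independence of $\Ga_N, \Ga_M$, since here $\tilde N = N$ and $\tilde M = M$ by hypothesis, so the subgroup $K$ of Definition~\ref{def:indept} is exactly $N\cap M$. Hence at least one of the two is $G$-unoriented, and after relabelling we may assume $\Ga_N$ is the $G$-unoriented quotient, while the orientedness of $\Ga_M$ is left open.

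For part (b), set $K:= N\cap M$ and $\ovGa := \Ga_K$. I would first argue that $\ovGa$ is non-degenerate: it is not a cycle by the independence hypothesis and Definition~\ref{def:indept}, and it cannot be a single vertex or a single edge because it still admits $\Ga_N\cong C_r$ with $r\geq 3$ as a further normal quotient. The dichotomy recalled from \cite[Theorem 1.1]{janc1} then gives $(\ovGa,\ov G)\in\OG(4)$ and that $(\Ga,G)$ is a normal cover of $(\ovGa,\ov G)$. Because $K\leq N$ and $K\leq M$, the quotient map sets up a natural bijection between the $\ov N$-orbits (respectively $\ov M$-orbits) on $V\ovGa$ and the $N$-orbits (respectively $M$-orbits) on $V\Ga$, yielding $\ovGa_{\ov N}\cong \Ga_N = C_r$ and $\ovGa_{\ov M}\cong \Ga_M$. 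An unravelling of definitions shows that the full setwise-stabilisers $\tilde{\ov N}, \tilde{\ov M}$ in $\ov G$ of these orbit partitions coincide with $\ov N, \ov M$ themselves, since a coset $gK$ fixes every $\ov N$-orbit in $\ovGa$ iff $g$ fixes every $N$-orbit in $\Ga$, i.e.\ $g\in N$. Then $\ov N\cap \ov M = 1$, and because $\ovGa_{\ov N\cap \ov M} = \ovGa$ is not a cycle, Definition~\ref{def:indept} records that $\ovGa_{\ov N}$ and $\ovGa_{\ov M}$ are independent.

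For part (c), assume $N\cap M=1$. Then $\ker\varphi = N\cap M = 1$, so $\varphi$ is a monomorphism from $G$ into $\Aut(\Ga_N)\times\Aut(\Ga_M)\leq D_{2r}\times D_{2s}$. The first projection $G\varphi\pi_1$ coincides with the induced action $G/N$ on $\Ga_N = C_r$; this action is vertex- and edge-transitive on an unoriented cycle. Since the only vertex-transitive subgroups of $D_{2r}$ are $Z_r$ and $D_{2r}$, and the rotation subgroup $Z_r$ preserves an orientation of $C_r$, the $G$-unorientedness of $\Ga_N$ forces $G/N = D_{2r}$. Applying the same dichotomy to $G\varphi\pi_2 = G/M\leq D_{2s}$ yields $G/M = D_{2s}$ in the $G$-unoriented case, and $G/M = Z_s$ in the $G$-oriented case (where $G/M$ is the vertex-transitive orientation-preserving subgroup of $D_{2s}$).

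I expect the main obstacle to be the bookkeeping in part (b) — specifically, verifying that $\tilde{\ov N} = \ov N$ and $\tilde{\ov M} = \ov M$, so that the independence hypothesis genuinely transfers to $(\ovGa,\ov G)$ at the level of Definition~\ref{def:indept}. This ensures one may replace $(\Ga,G)$ by $(\ovGa,\ov G)$ without loss of generality, reducing the classification in Theorem~\ref{thm3} to the faithful case $N\cap M=1$ handled in part (c). The remaining work in parts (a) and (c) is comparatively routine, relying on Lemma~\ref{lem:NM} and standard facts about vertex- and edge-transitive subgroups of the dihedral groups $D_{2r}$ acting on cycles.
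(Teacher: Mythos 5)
Your proof is correct and follows essentially the same route as the paper: part (a) via Lemma~\ref{lem:NM}, part (b) via the degenerate/non-degenerate dichotomy of \cite[Theorem 1.1]{janc1} together with the orbit correspondence under $K=N\cap M$, and part (c) via the faithfulness of $\varphi$ when $N\cap M=1$; in fact you supply two details the paper leaves implicit (ruling out $\ovGa$ being $K_1$ or $K_2$, and verifying that the setwise stabilisers $\tilde{\ov N},\tilde{\ov M}$ in $\ov G$ really are $\ov N,\ov M$, so that independence transfers at the level of Definition~\ref{def:indept}). One small inaccuracy in (c): for $r$ even, $Z_r$ and $D_{2r}$ are not the only vertex-transitive subgroups of $D_{2r}$ acting on $C_r$ --- the index-$2$ dihedral subgroup generated by the square of the rotation and a reflection through an edge-midpoint is also vertex-transitive --- but that subgroup is not edge-transitive, so the edge-transitivity you already invoke restores the dichotomy $G\varphi\pi_1\in\{Z_r,D_{2r}\}$ and your conclusion stands.
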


\begin{proof}
Part (a) follows from Lemma~\ref{lem:NM}. By Definition~\ref{def:indept}, $\ovGa=\Ga_{N\cap M}$ is not a cycle, and hence,
by \cite[Theorem 1.1]{janc1},  $(\Ga,G)$ is a 
normal cover of $(\ovGa, \ov G)$ and $(\ovGa, \ov G)\in\OG(4)$. 
Now $\ov N$ and $\ov M$ are normal subgroups of $\ov G$, and by construction 
the corresponding $\ov G$-normal quotients satisfy $\ovGa_{\ov N}\cong \Ga_N$ and $\ovGa_{\ov M}\cong \Ga_M$.
These quotients are independent since $\ov N \cap \ov M = 1$. Thus part (b) is proved.

If $N\cap M=1$ then the map $\varphi$ is a monomorphism 
from $G$ to $\Aut(\Ga_N)\times\Aut(\Ga_M)=D_{2r}\times A$, and $G\varphi\pi_1=G^{\Ga_N}=D_{2r}$, $G\varphi\pi_2=G^{\Ga_M}=A$.  
\end{proof}

Parts (a) and (b) of Theorem~\ref{thm3} follow from Lemma~\ref{lem:NM2}. Also, from part (b), 
$(\Ga,G)$ is a normal cover of $(\ovGa, \ov G)$, and so the order of the vertex stabiliser 
$G_x$ is equal to the order of a stabiliser in $\ovG$ of a vertex of $\ovGa$. 
Thus (even to prove that stabilisers have order 2) it is sufficient to consider the case 
where $N\cap M=1$. We therefore make this assumption from now on. We use the following notation.

\begin{notation}\label{notation1}{\rm 
We assume that $(\Ga,G)\in\OG(4)$, and that $\Ga_N=C_r$ ($G$-unoriented), $\Ga_M=C_s$, and 
$A=\Aut(\Ga_M)$, $\varphi, \pi_1,\pi_2$ are as in Lemma~\ref{lem:NM2}, with $N\cap M=1$.
We use the notation introduced in the proof of Lemma~\ref{lem:N}, so $x\in V\Ga$ has 
out-neighbours $y,y'$ and in-neighbours $z,z'$. We let 
$B = x^N, B^+=y^N, B^-=(y')^N$, the pairwise distinct $N$-orbits containing 
$x, y, y'$, and $C=x^M, C^+=y^M, C^-=(y')^M$, the $M$-orbits containing $x, y, y'$
(so $C^+=C^-$ if $\Ga_M$ is $G$-oriented).
Write
\[
 \Aut(\Ga_N)=\la a,c \,|\, a^r=c^2=1, a^c=a^{-1}\ra\cong D_{2r}
\]
where $a$ maps $B$ to $B^+$ and $c$ fixes $B$. We also write 
\[
 \Aut(\Ga_M)=A = \la b,d \,|\, b^s=d^2=1, b^d=b^{-1}\ra\cong D_{2s}\ \mbox{or}\ \la b \,|\, b^s=1\ra\cong Z_s
\]
according as  $\Ga_M$ is $G$-unoriented or $G$-oriented, respectively, 
and $b$ maps $C$ to $C^+$ and (if $\Ga_M$ is $G$-unoriented) $d$ fixes $C$.
}
\end{notation}

\subsection{Case $\Ga_M$ is $G$-oriented.}\label{sub:DCs}
First we derive information about generators of $G\varphi$.

\begin{lemma}\label{lem:NM3}
Using Notation~$\ref{notation1}$, and assuming that $\Ga_M$ is $G$-oriented, 
\begin{enumerate}
\item[(a)] $G_x=\la h\ra\cong Z_2$ is such that $h\varphi=(c,1)$.
\item[(b)] The following all hold, where either $t=1$, or $t=2$ divides $\gcd(r,s)$:
\[
 |G|=\frac{2rs}{t},\ |V\Ga|=\frac{rs}{t},\ N\varphi=\la(1,b^t)\ra,\ M\varphi=\la(a^t,1),(c,1)\ra, 
\]
and $\ G\varphi=\la(a^i,b), M\varphi\ra$, where either $i=0$ or $i=t-1=1$.
\end{enumerate}
\end{lemma}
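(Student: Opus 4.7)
\medskip\noindent
\textbf{Part (a).} My plan is to squeeze $|G_x|$ between $2$ and $2$ by combining Lemma~\ref{lem:N} with the orbit structure of $G_x$ on the neighbours of $x$. Since $\Ga_N$ is $G$-unoriented, Lemma~\ref{lem:N}(a) gives that $N=\tilde N$ is semiregular on $V\Ga$, so in particular $N_x=1$. Since $\Ga_M$ is $G$-oriented, Lemma~\ref{lem:N}(b) gives $G_x\leq\tilde M=M$. Now $G_x$ fixes both $B=x^N$ and $C=x^M$; using Notation~\ref{notation1}, the stabiliser of $B$ in $\Aut(\Ga_N)=D_{2r}$ is $\la c\ra$, and the stabiliser of $C$ in $\Aut(\Ga_M)=Z_s$ is trivial. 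Hence $G_x\varphi\leq\la c\ra\times\{1\}$, giving $|G_x|\leq 2$. Conversely, because $(\Ga,G)\in\OG(4)$ is edge-transitive but not arc-transitive on a $4$-valent graph, $G_x$ must contain the involution swapping the two out-neighbours $y,y'$ (and the two in-neighbours $z,z'$), so $|G_x|\geq 2$. Combining, $G_x=\la h\ra\cong Z_2$ with $h\varphi=(c,1)$.

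\medskip\noindent
\textbf{Part (b), structure of $M\varphi$ and $N\varphi$.} My plan is a Goursat-style analysis of $G\varphi\leq D_{2r}\times Z_s$. Since $N$ and $M$ are by definition the kernels of the $G$-actions on $\Ga_N$ and $\Ga_M$ respectively, $N\varphi=G\varphi\cap(\{1\}\times Z_s)$ and $M\varphi=G\varphi\cap(D_{2r}\times\{1\})$, with $G\varphi/M\varphi\cong Z_s$ and $G\varphi/N\varphi\cong D_{2r}$. Goursat then identifies a common quotient $D_{2r}/M\varphi\pi_1\cong Z_s/N\varphi\pi_2$, which is necessarily cyclic. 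Since $D_{2r}$ has abelianisation of exponent $2$, this common quotient has order $t\in\{1,2\}$, and trivially $t\mid s$. From part (a), $(c,1)\in M\varphi$, so $c\in M\varphi\pi_1$: if $t=1$ then $M\varphi\pi_1=D_{2r}$, and if $t=2$ then $M\varphi\pi_1$ is an index-$2$ subgroup of $D_{2r}$ containing $c$, forcing $M\varphi\pi_1=\la a^2,c\ra$ (the alternatives $\la a\ra$ and $\la a^2,ac\ra$ do not contain $c$), which exists only when $r$ is even. Hence when $t=2$ we have $t\mid\gcd(r,s)$. In both cases $M\varphi\pi_1=\la a^t,c\ra$, so $M\varphi=\la(a^t,1),(c,1)\ra$ and $N\varphi=\la(1,b^t)\ra$. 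Counting gives $|G|=|M\varphi|\cdot s=2rs/t$ and $|V\Ga|=|G|/|G_x|=rs/t$.

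\medskip\noindent
\textbf{Generator and main obstacle.} It remains to exhibit an element of $G\varphi$ projecting to $b$ under $\pi_2$, unique modulo $M\varphi$. When $t=1$ I choose $(1,b)$, giving $i=0$. When $t=2$ the $\pi_1$-projection of any such element lies in a fixed coset of $\la a^2,c\ra$ in $D_{2r}$; this coset cannot be $\la a^2,c\ra$ itself, else $G\varphi\pi_1\subseteq\la a^2,c\ra\neq D_{2r}$ would contradict the surjectivity of $\pi_1$. So the coset is $a\la a^2,c\ra$ and I may take $(a,b)$, giving $i=1=t-1$. The main obstacle throughout is the Goursat coset bookkeeping in part (b): the constraint $c\in M\varphi\pi_1$ must be leveraged simultaneously to pin down which index-$2$ subgroup of $D_{2r}$ arises (and hence force $r$ even when $t=2$) and to obstruct the choice $i=0$ in the $t=2$ case; everything else follows by routine order-counting.
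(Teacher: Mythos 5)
Your argument is correct, and its skeleton matches the paper's: identify $G_x\varphi=\la(c,1)\ra$ from the fact that a nontrivial element of $G_x$ fixes the vertices $B$ of $\Ga_N$ and $C$ of $\Ga_M$, then determine $M\varphi$ and $N\varphi$ inside $D_{2r}\times Z_s$ and count. The one genuinely different step is how you bound $t$: the paper writes $M\varphi=\la(a^t,1),(c,1)\ra$ with $t\mid r$ directly, matches orders to get $N\varphi=\la(1,b^t)\ra$, and then derives $t\le 2$ from the explicit computation $(a^i,b)\cdot(a^i,b)^{(c,1)}=(1,b^2)\in N\varphi$; you instead invoke Goursat's lemma and observe that the common quotient $D_{2r}/M\varphi\pi_1\cong Z_s/N\varphi\pi_2$ is cyclic, hence of order at most $2$ since the abelianisation of $D_{2r}$ has exponent $2$. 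These are two packagings of the same fact, but your route has two small bonuses: locating $M\varphi\pi_1$ among the index-$2$ subgroups of $D_{2r}$ containing $c$ forces $r$ even when $t=2$ on the spot, and your surjectivity-of-$\pi_1$ argument pins down $i=1$ (rather than just $0\le i<t$) in the $t=2$ case — a refinement the paper defers to Lemma~\ref{lem:t2}, where it is proved by exactly the argument you give. For part (a), your lower bound $|G_x|\ge 2$ via the involution swapping the out-neighbours is fine; the paper gets $G_x\ne1$ from edge-transitivity alone, which suffices once one knows $G_x\varphi\le\la c\ra\times 1$.
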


\medskip\noindent{\it Proof.\quad} 
Since $G$ is edge-transitive on $\Ga$, $G_x\ne1$. Let $h\in G_x\setminus\{1\}$. Then $h$ fixes 
both $B$ and $C$ setwise and so $h^{\Ga_N}\in\la c\ra$ and $h^{\Ga_M}=1$. Since $h\ne 1$ it follows that $h\varphi = (c,1)$
and $G_x=\la h\ra\cong Z_2$, proving part (a). 

Note that $M$ contains $h$, by Lemma~\ref{lem:N}. Since $M\varphi\leq \la a,c\ra\times 1$, it follows that 
$M\varphi=\la(a^t,1),(c,1)\ra$, for some $t\mid r$. Hence $|M|=\frac{2r}{t}, |G|=\frac{2rs}{t}$, and $|V\Ga|=\frac{rs}{t}$.
Also $N\varphi\leq  1\times \la b\ra$ so $N\varphi=\la (1,b^\ell)\ra$ for some $\ell\mid s$, and we have $|N|=\frac{s}{\ell},
|G|=\frac{2rs}{\ell}$. Therefore $\ell=t$ divides $\gcd(r,s)$. Since $G\varphi\pi_2=\la b\ra$, 
$G$ contains an element $g$ such that $g\varphi\pi_2=b$. All such elements satisfy $g\varphi = (a^ic^\delta,b)$ 
for some $i, \delta$. We may replace $g$ by $gm$ for some $m\in M$, and assume that $\delta=0$ and that $0\leq i < t$. 
It remains to prove that $t\leq2$. To see this, note that $G\varphi$ contains $(a^i,b).(a^i,b)^{(c,1)}=
(a^i,b).(a^{-i},b)=(1,b^2)$, which lies in $N\varphi$. \qed

\medskip
We consider the cases $t=1$ and $t=2$ separately. Recall the concepts of regular, 
Cayley graph, and isomorphism of graph--group pairs from Subsection~\ref{sub:notation}.

\begin{lemma}\label{lem:t1}
Under the assumptions of Lemma~$\ref{lem:NM3}$, if $t=1$ then at least one of $r$, $s$ is odd and
$(\Ga,G)$ is isomorphic to the graph--group pair $(\Ga(r,s),G(r,s))$ in Construction~$\ref{con1}$. 
\end{lemma}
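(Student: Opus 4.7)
The plan is to explicitly realise $(\Ga,G)$ as $(\Ga(r,s),G(r,s))$ by using the $N$- and $M$-orbits on $V\Ga$ as coordinates. From Lemma~\ref{lem:NM3}(b) with $t=1$ one has $|V\Ga|=rs$, $N\varphi=1\times\la b\ra$, $M\varphi=\la a,c\ra\times 1$, and $G\varphi=D_{2r}\times Z_s$, with $G_x\varphi=\la(c,1)\ra\subset M\varphi$.

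First I would label the $r$ orbits of $N$ as $B_0,\dots,B_{r-1}$ and the $s$ orbits of $M$ as $C_0,\dots,C_{s-1}$, chosen so that the $\varphi$-preimages $\tilde a,\tilde b,\tilde c$ of $(a,1),(1,b),(c,1)$ satisfy $B_i^{\tilde a}=B_{i+1}$ (mod $r$), $C_j^{\tilde b}=C_{j+1}$ (mod $s$), and $\tilde c$ fixes $B_0$; then $\tilde c$ sends $B_i$ to $B_{-i}$. With these labellings I would show that $f:V\Ga\to\ZZ_r\times\ZZ_s$, $v\mapsto(i,j)$ with $v\in B_i\cap C_j$, is a bijection. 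Surjectivity follows by counting since $|V\Ga|=rs$; for injectivity, if $v_2=n v_1=m v_1$ with $n\in N$ and $m\in M$ then $n^{-1}m\in G_{v_1}\le M$ by Lemma~\ref{lem:N}(b) (applicable because $\Ga_M$ is $G$-oriented, so $M=\tilde M$ contains every vertex stabiliser), forcing $n\in N\cap M=1$ and $v_1=v_2$.

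Next I would read off the edge structure under $f$. Since $\Ga_N$ is $G$-unoriented, the analysis of Lemma~\ref{lem:N}(a) shows that each vertex in $B_i$ has exactly two neighbors in $B_{i+1}$ and two in $B_{i-1}$; since $\Ga_M$ is $G$-oriented, the two out-neighbors of $(i,j)$ lie in $C_{j+1}$ and the two in-neighbors lie in $C_{j-1}$. Because $r,s\ge 3$, the four pairs $(i\pm 1,j\pm 1)$ are distinct, so by the bijectivity of $f$ the four neighbors of $(i,j)$ are forced to be exactly these four pairs, with out-neighbors $(i\pm 1,j+1)$. This matches the edge set and orientation of $\Ga(r,s)$ given by Construction~\ref{con1}, so $f$ is an isomorphism of oriented graphs. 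Connectedness of $\Ga$ combined with Lemma~\ref{lem:def1}(a) then forces at least one of $r,s$ to be odd, since $\Ga(r,s)$ is disconnected when both are even.

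Finally, by construction $\tilde a,\tilde b,\tilde c$ act on $V\Ga$ through $f$ precisely as $\mu,\nu,\sigma\in G(r,s)$ act on $\ZZ_r\times\ZZ_s$; so sending $\tilde a\mapsto\mu$, $\tilde b\mapsto\nu$, $\tilde c\mapsto\sigma$ and using that both $G$ and $G(r,s)$ admit the common presentation $D_{2r}\times Z_s$ extends to a group isomorphism $\psi:G\to G(r,s)$ for which $(f,\psi)$ is the required isomorphism of graph--group pairs. The main bookkeeping hurdle is the first step---arranging compatible labellings of the $N$- and $M$-orbits so that the $\varphi$-preimages of the canonical generators act as the expected coordinate shifts and reflection; once this is fixed, the edge analysis and the group isomorphism both follow essentially formally from Lemma~\ref{lem:NM3}(b) with $t=1$.
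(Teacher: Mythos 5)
Your proof is correct, but it follows a genuinely different route from the paper's. The paper realises $\Ga$ as a Cayley graph: with $t=1$ the subgroup $K=\la(a,1)\ra\times\la(1,b)\ra$ is normal and regular, so by \cite[Remark 4.1 and Lemma 4.2]{janc1} one may take $\Ga=\Cay(K,S_0\cup S_0^{-1})$ with $G_x=\la(c,1)\ra$ acting as automorphisms of $K$; the argument then pins down the connection set as $S_0=\{(a,b),(a^{-1},b)\}$ by comparing the right-multiplication action of $(a,1)$ and $(1,b)$ with the roles of $a$ and $b$ in Notation~\ref{notation1}, derives ``at least one of $r,s$ odd'' from $\la S_0\ra=K$, and reads off the isomorphism. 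You instead coordinatise $V\Ga$ directly by the pair of orbits $(B_i,C_j)$ containing a vertex, using $N\cap M=1$ together with $G_x\le M$ (Lemma~\ref{lem:N}(b)) and $|V\Ga|=rs$ to see that each $B_i\cap C_j$ is a singleton, and then recover the adjacency and orientation purely from the quotient data ($\Ga_N$ unoriented, $\Ga_M$ oriented); the parity condition comes from connectivity of $\Ga$ versus Lemma~\ref{lem:def1}(a). Both arguments are sound. Your version avoids the external Cayley-graph citation and is arguably more transparent here, but note that it is tailored to $t=1$: the paper's Cayley-graph template is reused almost verbatim in Lemmas~\ref{lem:t2}, \ref{lem:i}, \ref{lem:ii} and \ref{lem:iv}, where (e.g.\ for $t=2$, with $|V\Ga|=rs/2$ but still $r$ $N$-orbits and $s$ $M$-orbits) the orbit-intersection coordinatisation would no longer give a bijection onto $\ZZ_r\times\ZZ_s$ and would need modification. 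One cosmetic point: state the injectivity of $f$ before invoking the count $|V\Ga|=rs$ for surjectivity, since the counting argument presupposes that distinct vertices land on distinct pairs.
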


\begin{proof}
Suppose that $t=1$ and identify $G$ with $G\varphi$. Then $|V\Ga|=rs$ and $G = \la a,c\ra\times\la b\ra$. 
Moreover the group $K=\la (a,1)\ra\times \la(1,b)\ra$, is normal in $G$, acts regularly 
on $V\Ga$, and $G$ is the semidirect product $K.G_x$.  By \cite[Remark 4.1 and Lemma 4.2]{janc1},
we may assume that $\Ga=\Cay(K,S_0\cup S_0^{-1})$ for a $2$-element generating set $S_0$ for $K$ 
such that $S_0\cap S_0^{-1}=\emptyset$, and we may identify $x=1_K$, $S_0=\{y,y'\}$, and $S_0^{-1}=\{z,z'\}$.
The group $K$ acts by right multiplication and $G_x=\la(c,1)\ra\leq \Aut(K)$ acts naturally on $V\Ga=K$. 
Now $y=(a^i,b^j)$ for some $i\in\ZZ_r, j\in\ZZ_s$, and so $y'=y^{(c,1)}=(a^{-i},b^{j})$. In
particular $i\ne0$ since $y'\ne y$.

Since $N\leq K$, the $N$-orbits are the cosets $N(a^k,1)$ for $k\in \ZZ_r$, and as in Notation~\ref{notation1}, 
$(a,1)$ maps $B=1^N=N$ to $B^+=y^N=N(a^i,1)$. However $(a,1)$ maps $B$ to $N(a,1)$ and hence $i=1$.
Also $M\cap K=\la(a,1)\ra$ is transitive on each $M$-orbit, and the  $M$-orbits are the cosets $M(1,b^k)$ for $k\in \ZZ_s$. 
As in Notation~\ref{notation1}, 
$(1,b)$ maps $C=1^M=M$ to $C^+=y^M=M(1,b^j)$. However $(1,b)$ maps $C$ to $M(1,b)$ and hence $j=1$. 
Thus $S_0=\{(a,b),(a^{-1},b)\}$, and since $K=\la S_0\ra$, at least one of $r, s$ must be odd.

It follows that each vertex $(a^k,b^\ell)$ has out-neighbours $(a^{k\pm1},b^{\ell+1})$,
and so the map $f:(a^k,b^\ell)\mapsto (k,\ell)$ defines a graph isomorphism from $\Ga$ to the graph $\Ga(r,s)$ of Construction~\ref{con1}.
Also the map $(a,1)\mapsto \mu, (1,b)\mapsto \nu, (c,1)\mapsto \sigma$ extends to an isomorphism $\varphi'$ from $G$ to the group $G(r,s)$ of 
Construction~\ref{con1}, and $f,\varphi'$ define an isomorphism from $(\Ga,G)$ to $(\Ga(r,s),G(r,s))$. 
\end{proof}

\begin{lemma}\label{lem:t2}
Under the assumptions of Lemma~$\ref{lem:NM3}$, if $t=2$ then both $r$ and $s$ are even, 
and $(\Ga,G)$ is isomorphic to the graph--group pair  
$(\Ga^+(r,s),G^+(r,s))$ in Construction~$\ref{con1}$. 
\end{lemma}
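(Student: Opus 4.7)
The plan is to parallel the proof of Lemma~\ref{lem:t1}, adjusted for $t=2$. Since $t\mid\gcd(r,s)$ by Lemma~\ref{lem:NM3}(b), the value $t=2$ immediately forces both $r$ and $s$ to be even. Next I would pin down the parameter $i\in\{0,1\}$ in the expression $G\varphi=\la(a^i,b),M\varphi\ra$. If $i=0$, then $(1,b)\in G\varphi$ acts trivially on $\Ga_N$, so $(1,b)$ lies in $N\varphi=\la(1,b^2)\ra$, forcing $b\in\la b^2\ra$; this is impossible when $s$ is even. Hence $i=1$, and identifying $G$ with $G\varphi$ we have $G=\la(a,b),(a^2,1),(c,1)\ra$.

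The second step is to exhibit a regular subgroup. Let $K=\la(a,b),(a^2,1)\ra\subseteq \la a\ra\times\la b\ra$; this is abelian, and as it does not contain $(c,1)$ a direct index computation shows $|K|=rs/2=|V\Ga|$. Moreover $K$ is normal in $G$ because conjugation by $(c,1)$ inverts the first coordinate of each generator and fixes the second; hence $G=K\rtimes G_x$ and $K$ acts regularly on $V\Ga$. By \cite[Remark 4.1 and Lemma 4.2]{janc1}, I may realize $\Ga$ as the Cayley graph $\Cay(K,S)$ with $x=1_K$, $S$ the set of neighbours of $x$, and $G_x=\la(c,1)\ra$ acting on $K$ by conjugation.

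Finally I would construct the explicit isomorphism with $(\Ga^+(r,s),G^+(r,s))$. Parameterising $K=\{(a^i,b^j):i\equiv j\pmod 2\}$, define $f\colon K\to X^+$ by $(a^i,b^j)\mapsto(i,j)$; the image is precisely $X^+$. Using Notation~\ref{notation1}, the out-neighbour $y$ of $x$ satisfies $y^N=B^+$ and $y^M=C^+$, which in these coordinates forces $y=(a,b)\mapsto(1,1)$; then $y'=y^{(c,1)}=(a^{-1},b)\mapsto(-1,1)$, while the in-neighbours $z=y^{-1}$, $z'=(y')^{-1}$ map to $(-1,-1)$ and $(1,-1)$. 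These are exactly the four neighbours of $(0,0)$ in $\Ga^+(r,s)$ with the orientation of Construction~\ref{con1}. The assignment $(a^2,1)\mapsto\mu^2,\ (a,b)\mapsto\mu\nu,\ (c,1)\mapsto\sigma$ extends to a group isomorphism $\varphi'\colon G\to G^+(r,s)$, and together $(f,\varphi')$ give the required isomorphism of graph--group pairs. The main point to watch is the verification that the Cayley-graph connection set matches the combinatorial neighbourhood of $(0,0)$ in $\Ga^+(r,s)$, but once $i=1$ is pinned down this matching proceeds almost verbatim as in the $t=1$ case.
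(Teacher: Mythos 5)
Your proposal is correct and follows essentially the same route as the paper's proof: force $r,s$ even from $t\mid\gcd(r,s)$, pin down $i=1$ (you do this via $N\varphi=\la(1,b^2)\ra$ not containing $(1,b)$, the paper via surjectivity of $\pi_1$ onto $\la a,c\ra$ — both are fine), exhibit the regular index-two subgroup $K=\la(a,b),(a^2,1)\ra$, realize $\Ga$ as a Cayley graph, use the $N$- and $M$-orbit cosets to force $S_0=\{(a,b),(a^{-1},b)\}$, and then write down the explicit isomorphism onto $(\Ga^+(r,s),G^+(r,s))$. The only compression is the step "forces $y=(a,b)$", which the paper carries out by comparing $y^N$ and $y^M$ with $N(a,b)$ and $(M\cap K)(a,b)$ respectively; your sketch indicates exactly this computation, so there is no gap.
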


\medskip\noindent{\it Proof.\quad} 
Suppose that $t=2$ and identify $G$ with $G\varphi$. Then $r,s$ are both even, 
$|V\Ga|=\frac{rs}{2}$, and $G = \la (a^i,b), (a^2,1), (c,1)\ra$ of order $rs$, 
where $i=0$ or $1$, by Lemma~\ref{lem:NM3}. Moreover since $G\pi_1=\la a,c\ra$ it follows that $i=1$.  
Then the group $K:=\la (a^2,1),(a,b)\ra$ is a subgroup of $G$ of index $2$, and as 
$G_x=\la(c,1)\ra\cong Z_2$, we have $K_x=1$ and $K$ acts regularly on $V\Ga$. 
As in Lemma~\ref{lem:t1}, we may assume that $\Ga=\Cay(K,S_0\cup S_0^{-1})$ for a $2$-element generating set $S_0$ for $K$ 
such that $S_0\cap S_0^{-1}=\emptyset$, and we may identify $x=1_K$, $S_0=\{y,y'\}$, and $S_0^{-1}=\{z,z'\}$.
The group $K$ acts by right multiplication and $G_x=\la(c,1)\ra\leq \Aut(K)$ acts naturally on $V\Ga$. 
Thus $y=(a^j,b^k)$ for some $j\in\ZZ_r, k\in\ZZ_s$ of the same parity, and $y'=y^{(c,1)}=(a^{-j},b^{k})$. 

Since $N\leq K$, the $N$-orbits are the cosets $N(a^\ell,1)$ for even $\ell\in \ZZ_r$, and $N(a^\ell,b)$ for odd $\ell\in \ZZ_r$.
As in Notation~\ref{notation1}, 
$(a,b)\in K$ and maps $B=1^N=N$ to $B^+=y^N$, and we have $y^N=N(a^j,1)$ if $j$ is even and $N(a^j,b)$ if $j$ is odd. 
However $(a,b)$ maps $B$ to $N(a,b)$ and hence $j=1$, and so $k$ is odd (since $j, k$ have the same parity).
Also $M\cap K=\la(a^2,1)\ra$ is transitive on each $M$-orbit, and the  $M$-orbits are therefore the cosets 
$M(1,b^\ell)$ for even $\ell\in \ZZ_s$, and $M(a,b^\ell)$ for odd $\ell\in \ZZ_s$. 
As in Notation~\ref{notation1}, 
$(a,b)$ maps $C=1^M=M$ to $C^+=y^M=M(a,b^k)$. However $(a,b)$ maps $C$ to $M(a,b)$ and hence $k=1$. 
Thus $S_0=\{(a,b),(a^{-1},b)\}$.

It follows that each vertex $(a^k,b^\ell)$ has out-neighbours $(a^{k\pm1},b^{\ell+1})$,
and so the map $f:(a^k,b^\ell)\mapsto (k,\ell)$ defines a graph isomorphism from $\Ga$ to the graph $\Ga^+(r,s)$ of Construction~\ref{con1}.
Also the map $(a,b)\mapsto \mu\nu, (a^2,1)\mapsto \mu^2, (c,1)\mapsto \sigma$ extends to an isomorphism $\varphi'$ from $G$ to the group $G^+(r,s)$ of 
Construction~\ref{con1}, and $f,\varphi'$ define an isomorphism from $(\Ga',G)$ to $(\Ga^+(r,s),G^+(r,s))$. \qed    

\subsection{Case $\Ga_M$ is $G$-unoriented.}\label{sub:Cs}
We identify $G$ with $G\varphi$, and we
first derive a short explicit list of possibilities for $G$.

\begin{lemma}\label{lem:NM4}
Using Notation~$\ref{notation1}$, identify $G$ with $G\varphi$, and assume that $\Ga_M=C_s$ is $G$-unoriented. Then 
$G_x=\la (c,d)\ra\cong Z_2$, and one of the following holds.
\begin{enumerate}
 \item[(i)] $|V\Ga|=2rs$, $M=\la (a,1), (c,1)\ra, N=\la (1,b), (1,d)\ra$, and $G=M\times N$;
 \item[(ii)] $r,s$ are both even, $|V\Ga|=rs$, $M=\la (a^2,1), (c,1)\ra, N=\la (1,b^2), (1,d)\ra$, and $G=\la M, N, (a,b)\ra$;
\item[(iii)] $r,s$ are both even, $|V\Ga|=\frac{rs}{2}$, $M=\la (a^2,1)\ra, N=\la (1,b^2)\ra$, and $G=\la M, N,$ $(a,d^\delta b), (c,d)\ra$,
where $\delta=0$ or $1$;
\item[(iv)] $|V\Ga|=rs$, $G=(M\times N).G_x$ with $M\times N$ regular on $V\Ga$, 
where either $M=\la (a,1)\ra$ or $r$ is even and $M=\la (a^2,1), (ac,1)\ra$, and 
either $N=\la (1,b)\ra$ or $s$ is even and $N=\la (1,b^2), (1,bd)\ra$. 
\end{enumerate}
\end{lemma}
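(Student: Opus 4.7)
My plan is to first pin down the vertex stabiliser, then use Goursat's lemma to describe $G$ as a subdirect product of $D_{2r}$ and $D_{2s}$, and finally use the $4$-valent simple graph constraint to enumerate the compatible pairs of normal subgroups. Since both $\Ga_N$ and $\Ga_M$ are $G$-unoriented, Lemma~\ref{lem:N}(a) gives that $M$ and $N$ are each semiregular on $V\Ga$, so $G_x\cap M=G_x\cap N=1$. On the other hand $G_x$ fixes the $N$-orbit and the $M$-orbit containing $x$, so under $\varphi$ it embeds into $\la c\ra\times\la d\ra$. A nontrivial element of $G_x$ whose first (resp.\ second) coordinate is trivial would lie in $N$ (resp.\ $M$), contradicting these intersections; so the only possible nontrivial element of $G_x$ is $(c,d)$. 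Edge-transitivity forces $|G_x|\geq 2$, giving $G_x=\la(c,d)\ra\cong Z_2$.

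Set $v=|V\Ga|$; then $|G|=2v$ and semiregularity gives $|M|=v/s$, $|N|=v/r$. Identifying $G$ with $G\varphi$, Goursat's lemma describes $G\le D_{2r}\times D_{2s}$ via an isomorphism $\phi\colon D_{2r}/M\to D_{2s}/N$ where $M\trianglelefteq D_{2r}$, $N\trianglelefteq D_{2s}$, and the requirement $(c,d)\in G$ becomes the single condition $\phi(cM)=dN$. Now choose $g_0=(g_1,g_2)\in G$ with $x\cdot g_0$ an out-neighbour of $x$; since $g_0$ sends $B=x^N$ to a neighbour of $B$ in $\Ga_N=C_r$, we have $g_1\in\{a^{\pm1},a^{\pm1}c\}$, and analogously $g_2\in\{b^{\pm1},b^{\pm1}d\}$. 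The requirement that the four neighbours of $x$ be pairwise distinct (simple $4$-valent graph) reduces to $g_0^2\notin G_x$ (else $y=z$) and $g_0(c,d)g_0\notin G_x$ (else $y=z'$). A direct dihedral calculation, using $aca=c$ and $(ac)c(ac)=a^2c$, shows the first condition fails precisely when $g_1\in\{a^{\pm1}c\}$ and $g_2\in\{b^{\pm1}d\}$ simultaneously, and the second precisely when $g_1\in\{a^{\pm1}\}$ and $g_2\in\{b^{\pm1}\}$ simultaneously. So $g_0$ must \emph{mix} the two types, meaning $g_0^2$ and $(g_0(c,d))^2$ lie in $M\times 1$ and $1\times N$ respectively, yielding $a^2\in M$ and $b^2\in N$.

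With $\la a^2\ra\le M$ and $\la b^2\ra\le N$, the normal subgroups $M\trianglelefteq D_{2r}$ reduce to the five possibilities $\la a^2\ra$, $\la a\ra$, $\la a^2,c\ra$, $\la a^2,ac\ra$, $D_{2r}$ (the first and the two dihedral ones requiring $r$ even), and likewise for $N$. Existence of $\phi$ matches the isomorphism type of the quotients ($1$, $Z_2$, or $V_4$), and the constraint $\phi(cM)=dN$ rules out the incompatible pairings: for instance $(\la a^2,c\ra,\la b\ra)$ fails because $c\in M$ forces $dN=N$, hence $d\in N$, excluding $N=\la b\ra$. The surviving combinations yield the four cases: $(D_{2r},D_{2s})$ is case (i); $(\la a^2,c\ra,\la b^2,d\ra)$ is case (ii); $(\la a^2\ra,\la b^2\ra)$ is case (iii), where the residual freedom $\phi(aM)\in\{bN,bdN\}$ is the parameter $\delta\in\{0,1\}$; and the four pairs in $\{\la a\ra,\la a^2,ac\ra\}\times\{\la b\ra,\la b^2,bd\ra\}$ give case (iv), with $M\times N$ regular on vertices by an order count. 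In each case the presentation in the lemma is obtained by adjoining $(c,d)$ and a Goursat-witness for $\phi(aM)$ to $M$ and $N$. The main obstacle is the bookkeeping in this final case analysis---tracking which incompatible pairings are eliminated by the constraint $\phi(cM)=dN$ and which parity conditions on $r$ and $s$ are required for the various dihedral-type normal subgroups to exist.
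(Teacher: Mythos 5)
Your proposal is correct, but it reaches the classification by a genuinely different route from the paper. The paper pins down $G_x=\la (c,d)\ra$ much as you do, but then works directly with the graph: it studies the orbit stabilisers $N_C$ and $M_B$, splits into the cases $(1,d)\in N_C$ (leading to (i) and (ii)) and $N_C=1$, writes $N=\la(1,b^t)\ra$ or a dihedral-type subgroup, and bounds $t\le 2$ by a global argument showing that $K=M\times N$ has at most two vertex-orbits because all edges out of $x^K$ land in $y^K$ and $\Ga$ is connected; the coset representatives $(a,b)$ and $(a,d^\delta b)$ are then extracted by hand. You instead package $G\le D_{2r}\times D_{2s}$ via Goursat (legitimate here since $M=G\cap(D_{2r}\times 1)$, $N=G\cap(1\times D_{2s})$ and both projections are onto by Lemma~\ref{lem:NM2}(c)), and replace the connectivity count by a purely local observation: an element $g_0$ carrying $x$ to an out-neighbour must project to elements moving $B$ to $B^{\pm}$ and $C$ to $C^{\pm}$, and distinctness of the four neighbours (equivalently $g_0^2\notin G_x$ and $g_0(c,d)g_0\notin G_x$) forces one projection to be a rotation and the other a reflection, whence $a^2\in M$ and $b^2\in N$. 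I checked the dihedral computations and the resulting bookkeeping: the five normal subgroups above $\la a^2\ra$ (resp.\ $\la b^2\ra$), matching of quotient types, and the constraint $\phi(cM)=dN$ do reproduce exactly cases (i)--(iv), including the parameter $\delta$ and the parity conditions. What each approach buys: yours is more systematic and avoids the global connectivity/orbit-counting step, making the enumeration of admissible pairs $(M,N)$ transparent; the paper's is more elementary (no Goursat) and stays closer to the graph, which also hands it the explicit generators used in the subsequent Lemmas~\ref{lem:i}--\ref{lem:iv}. One small gloss in your write-up: in case (iv) regularity of $M\times N$ is not literally "an order count" --- you also need semiregularity, which follows since $(M\times N)\cap G_x=1$ (as $(c,d)\notin M\times N$) together with normality and vertex-transitivity; this is immediate from your setup but should be said.
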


\begin{proof}
By Lemma~\ref{lem:N}, the subgroup of all elements of $G$ fixing setwise each $N$-orbit, or each $M$-orbit,
is equal to $N$, or $M$, respectively, and both $N$ and $M$ are semiregular on $V\Ga$. 
Since $G$ is edge-transitive on $\Ga$, $G_x$ contains an element $h$ that interchanges 
$y$ and $y'$, and hence $h$ interchanges the $N$-orbits $B^+$ and $B^-$. Thus $h^{\Ga_N}=c$ 
and $G_x\cap N = N_x$ has index $2$ in $G_x$. Since $N$ is semiregular it follows that $N_x=1$ and so $G_x=\la h\ra\cong Z_2$. 
Similarly $h$ interchanges $C^+$ and $C^-$ so that  $h^{\Ga_M}=d$. This implies that $h = (c,d)$, 
proving the first assertion. 

Next we study the setwise 
stabiliser $N_C$ of $C=x^M$ in $N$. Since $N\cap M=1$ we have $N_C\cong N_C^{\Ga_M}\leq G_C^{\Ga_M}=\la d\ra$, 
and it follows that $N_C\leq\la(1,d)\ra$. Similarly  $M_B\leq\la(c,1)\ra$.
Note that $(1,d)\in N_C$ if and only if $(c,1)\in M_B$ (since $(c,d)\in G_x$); in particular $N_C=1$ if and only if $M_B=1$. Also we may assume, 
without loss of generality, that $z, z'$ lie in $B^+, B^-$ respectively (see  Notation~$\ref{notation1}$).

Suppose that $(1,d)\in N_C$ and let $x':=x^{(1,d)}$. Note that $x'\ne x$ (since $N$ is semiregular), and that $x'=x^{(c,d)(1,d)}=x^{(c,1)}$.
Thus $x, x'\in x^N\cap x^M=B\cap C$.  Since $N^{\Ga_M}$ is a normal subgroup of $G^{\Ga_M}=\la b,d\ra$ containing $d$, it follows that 
$N^{\Ga_M}$ contains $dd^b=db^{-1}db=b^2$, so $N$ is either $\la(1,b^2),(1,d)\ra$ with $s$ even, or $\la (1,b),(1,d)\ra$. Thus $|G|=2rs$ or $4rs$, and
$|V\Ga|=rs$ or $2rs$, and hence $|M|=r$ or $2r$, respectively. Since $(c,1)\in M$, a similar 
argument identifies the possibilities for $M$. If $|G|=4rs$ then (i) holds. If $|G|=2rs$ then $r,s$, $|V\Ga|$, $M, N$ are as in (ii)
and $M\times N$ has index 2 in $G$. Since $a\in G^{\Ga_N}$, $G$ contains an element of the form $g=(a,g')$ for some $g'$ and, 
adjusting $g$ by an element of $N$, we may assume that $g'=1$ or $b$. Since $M$ does not contain $(a,1)$ it follows that $g=(a,b)$ and (ii) holds.

Suppose now that $N_C=1$, so also $M_B=1$. Then $N^{\Ga_M}$ is a semiregular normal subgroup 
 of $G^{\Ga_M}=\la b,d\ra=D_{2r}$, and  $N^{\Ga_M}$ does not contain $d$, so we have the following possibilities for $N$: either 
$N=\la(1,b^t)\ra$ for some divisor $t$ of $s$, or $s$ is even and 
$N=\la(1,b^2), (1,bd)\ra$ and in this latter case we set $t=1$. Then $|N|=\frac{s}{t}$,  $|G|=|G^{\Ga_N}|.|N|=\frac{2rs}{t}$, $|V\Ga|=\frac{rs}{t}$,
and so $|M|=\frac{r}{t}$. Hence $t$ divides $\gcd(r,s)$ and a similar argument shows that either $M=\la(a^t,1)\ra$, or $r$ is even, $t=1$, and 
$M=\la (a^2,1), (ac,1)\ra$. In all cases $K:=M\times N$ is semiregular on $V\Ga$ (as it does not contain $(c,d)$)
and has $t$ orbits of size $\frac{rs}{t^2}$. Now each $K$-orbit is a disjoint union of $\frac{r}{t}$ orbits of $N$,
and it follows that an $N$-orbit and an $M$-orbit meet in at most one vertex. In particular $B^+\cap C^+=\{y\}$.
This implies that $z\not\in C^+$ (since we are assuming that $z\in B^+$) and 
hence $z\in C^-$ and $z'\in C^+$.  Now $y^K$ contains $C^+=y^M$ and hence contains $z'$,
which implies that $B^-=(y')^N=(z')^N\subset (z')^K=y^K$. Hence $y^K$ contains $\{y,y',z,z'\}$, and it follows, since $K  \unlhd G$,
that all edges from vertices in $x^K$ go to vertices in $y^K$, and since $\Ga$ is connected this implies that $K$ has at most two orbits in $V\Ga$.
Thus $t=1$ or $2$.  If  $t=1$, then $K$ is regular so $G=K.G_x$ as in (iv). 
Thus we may assume that $t=2$. Then $r, s, M, N, |V\Ga|$ are as in (iii) and $K$ has index 4 in $G$. Since $G^{\Ga_N}$ 
contains $a$, $G$ contains an element of the form $g=(a,g')$ with $g'\in\la b,d\ra$. Now $N$ contains $(1,b^2)$ so we may assume that
$g'=d^\delta b^{\delta'}$ where each of $\delta, \delta'$ is 0 or 1. The subgroup $M$ does not contain $(a,1)$, and so the 
element $g'\ne 1$. Also if $g'=d$ then $M$ would contain $g(c,d)= (ac,1)$, which is not the case. Hence $g=(a,b)$ or $(a,db)$, so $G$ is as in (iii).
\end{proof}

Now we analyse cases Lemma~\ref{lem:NM4}(i)--(iv) separately.

\begin{lemma}\label{lem:i}
If Lemma~$\ref{lem:NM4}(i)$ holds then $r$ and $s$ are both odd and $(\Ga,G)\cong(\Ga_2(r,s),G_2(r,s))$, as in Construction~$\ref{con2a}$. 
\end{lemma}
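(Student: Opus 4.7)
The plan is to construct compatible explicit isomorphisms $\varphi': G\to G_2(r,s)$ and $\Phi: V\Ga\to X_2$, verify they give an isomorphism of graph--group pairs, and then deduce that $r, s$ are both odd from connectedness. To match the abstract groups, observe that from Construction~\ref{con2a} we have $G_2(r,s)=\la\mu,\sigma\tau\ra\times\la\nu,\sigma\ra\cong D_{2r}\times D_{2s}$ (using $\sigma^2=\tau^2=1$ and $\sigma\tau=\tau\sigma$), mirroring the factorization $G=M\times N$ from Lemma~\ref{lem:NM4}(i). So I define $\varphi'$ on generators by $(a,1)\mapsto\mu$, $(c,1)\mapsto\sigma\tau$, $(1,b)\mapsto\nu$, $(1,d)\mapsto\sigma$; checking the dihedral relations within each factor shows $\varphi'$ is a group isomorphism. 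Under it $(c,d)=(c,1)(1,d)\mapsto\sigma\tau\cdot\sigma=\tau$, so $G_x=\la(c,d)\ra$ maps to the stabilizer $\la\tau\ra$ of $(0,0)_0$ in $G_2(r,s)$, inducing a $G$-equivariant bijection on vertex sets.

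To make this bijection explicit, I parametrize the $2rs$ cosets in $V\Ga=G/G_x$ via $P(i,j):=G_x(a^i,b^j)$ and $Q(i,j):=G_x(a^i,b^{-j}d)$ for $(i,j)\in\ZZ_r\times\ZZ_s$ (distinctness is immediate from $G_x=\{1,(c,d)\}$ together with the fact that the first components $a^i$ and $ca^i$ lie in disjoint cosets of $\la a\ra$), and set $\Phi:P(i,j)\mapsto(i,j)_0$, $Q(i,j)\mapsto(i,j)_1$. The $G$-equivariance of $\Phi$ is then verified generator-by-generator; the representative computation is $P(i,j)\cdot(c,1)=G_x(a^ic,b^j)=G_x(ca^{-i},b^j)=Q(-i,j)$, matching $\sigma\tau:(i,j)_0\mapsto(-i,j)_1$, and the others are similar.

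The main step, and the principal obstacle, is to show $\Phi$ preserves the edge orientation; since a pair in $\OG(4)$ is determined up to isomorphism by its $G$-action together with one out-neighbor of a base vertex, it suffices to check $\Phi(y)=(1,1)_1$. From Notation~\ref{notation1}, $y\in B^+\cap C^+=B\cdot(a,1)\cap C\cdot(1,b)$; a short calculation splitting on the two elements of $G_x$ yields $B^+\cap C^+=\{P(1,1),Q(1,1)\}$. To eliminate $P(1,1)$, write $y=G_xu$ and observe that since $(c,d)$ swaps $y\leftrightarrow y'$ the arc stabilizer $G_{(x,y)}=G_x\cap G_y$ is trivial, so the two in-neighbors of $x$ are precisely $\{G_xu^{-1},G_xu^{-1}(c,d)\}$ (the two preimages of the equation $y^g=x$ under $g\in u^{-1}G_x$). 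For $u=(a,b)$ this set computes to $\{P(-1,-1),P(1,1)\}=\{y',y\}$, contradicting the disjointness of in- and out-neighbors required by the orientation. Hence $y=Q(1,1)$ with $\Phi(y)=(1,1)_1$, which by Construction~\ref{con2a} is an out-neighbor of $(0,0)_0$ in $\Ga_2(r,s)$, so $(\Phi,\varphi')$ is an isomorphism of graph--group pairs. Finally, $\Ga$ is connected because $(\Ga,G)\in\OG(4)$, and by Lemma~\ref{lem:con2a} $\Ga_2(r,s)$ is connected iff both $r,s$ are odd, so both $r$ and $s$ must be odd.
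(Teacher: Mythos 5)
Your proof is correct. It shares the overall architecture of the paper's argument --- write down the target isomorphism on generators, use the normalisations built into Notation~\ref{notation1} (that $(a,1)$ carries $B$ to $B^+$ and $(1,b)$ carries $C$ to $C^+$) to pin down the out-neighbour $y$, then read off the graph isomorphism --- but the mechanics differ genuinely. The paper exhibits the index-$2$ regular normal subgroup $K=\la (a,1),N\ra$ and realises $\Ga$ as $\Cay(K,S_0\cup S_0^{-1})$, so that $y$ is an element of $K$; the ``wrong'' candidate is excluded there because it would force $y'=y^{-1}$, contradicting $S_0\cap S_0^{-1}=\emptyset$, and oddness of $r$ and $s$ follows from $\la S_0\ra=K$. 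You instead work directly with the coset space $G/G_x$ (no regular subgroup needed), locate $y$ in the two-element set $B^+\cap C^+=\{P(1,1),Q(1,1)\}$, exclude $P(1,1)$ by showing it would simultaneously be an in-neighbour of $x$, and obtain oddness from connectedness of the standard double cover via Lemma~\ref{lem:con2a}. These two exclusion arguments and the two parity arguments are equivalent in substance (the out- and in-neighbours of $1_K$ in a Cayley graph are $S_0$ and $S_0^{-1}$, and generation of $K$ by $S_0$ is connectedness), so neither route is shorter, but yours transfers the parity computation to a lemma already proved and avoids invoking the Cayley-graph normal form. One incidental point in your favour: your generator images $(1,d)\mapsto\sigma$, $(c,1)\mapsto\sigma\tau$ send $(c,d)$ to $\tau$, which correctly fixes $(0,0)_0$ and sends $N$ to the subgroup $\la\nu,\sigma\ra$ of Construction~\ref{con2a}; the assignment $(1,d)\mapsto\tau$ appearing in the paper's proof would send $(c,d)$ to $\sigma$, which moves $(0,0)_0$, and is evidently a typo for what you wrote. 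The only place you are terse is the distinctness and exhaustiveness of the cosets $P(i,j)$, $Q(i,j)$; this is routine (the two elements of $P(i,j)$ have coordinates of types rotation--rotation and reflection--reflection, those of $Q(i,j)$ of mixed type), but spell it out in that form rather than via ``cosets of $\la a\ra$''.
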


\begin{proof}
Suppose that Lemma~\ref{lem:NM4}(i) holds. Then $K:=\la (a,1), N\ra$ is a regular normal (index 2) subgroup of $G$,
and as in Lemma~\ref{lem:t2},
we may assume that $\Ga=\Cay(K,S_0\cup S_0^{-1})$ for a $2$-element subset $S_0\subseteq K\setminus\{1\}$, 
with $S_0\cap S_0^{-1}=\emptyset$ such that $K=\la S_0\ra$,  $x=1_K$, $S_0=\{y,y'\}$. 
Now $K$ acts by right multiplication, $G_x=\la (c,d)\ra\leq \Aut(K)$ 
acts naturally on $V\Ga$, and  $S_0$ is a $G_x$-orbit.  Thus $y = (a^i,d^\delta b^j)$ for some $i\in\ZZ_r, j\in\ZZ_s, \delta=0$ or $1$.
This means that $y'=y^{(c,d)}=(a^{-i},d^\delta b^{-j})$ and since $S_0=\{y,y'\}$ generates $K$, it follows that $\delta=1$,
$i$ and $j$ are nonzero, and $\gcd(i,r)=1$. 

Since $N\leq K$, the $N$-orbits are the cosets $N(a^k,1)$ for $k\in \ZZ_r$, and as in Notation~\ref{notation1}, 
$(a,1)$ maps $B=1^N=N$ to $B^+=y^N=N(a^i,1)$. However $(a,1)$ maps $B$ to $N(a,1)$ and hence $i=1$. Thus $y=(a,db^j)$.
To determine the orbits of $M$, we note that $(c,1)=(c,d)(1,d)\in M$ maps $u\in K$ to $u^{(c,d)}(1,d)$ and so the $M$-orbit 
containing $(1,b^k)$ is the set product $\la(a,1)\ra \{(1,b^k), (1,db^{k})\}$. By Notation~\ref{notation1},
$(1,b)$ maps $C=1^M$ to $C^+=y^M=\la(a,1)\ra \{(1,b^{j}), (1,db^{j})\}$. However $(1,b)$ maps $C$ to 
$\la(a,1)\ra \{(1,b), (1,db)\}$ and hence $j=1$. Thus $S_0=\{(a,db),(a^{-1},db^{-1})\}$ and since
$S_0$ generates $K$ we see that both $r$ and $s$ must be odd. 

Finally, defining $f:V\Ga\rightarrow X_2$ by
$(a^i,b^j)\mapsto(i,j)_0,\ (a^i,db^j)\mapsto (i,j)_1$, and the group isomorphism 
$\varphi:G\rightarrow G_2(r,s)$ extending $(a,1)\mapsto \mu, (1,b)\mapsto \nu, (c,1)\mapsto \sigma\tau, (1,d)\mapsto \tau$, we obtain
an isomorphism $(f,\varphi)$ from the pair $(\Ga,G)$ to the pair  $(\Ga_2(r,s),G_2(r,s))$ in Construction~\ref{con2a}.
\end{proof}

\begin{lemma}\label{lem:ii}
The case Lemma~$\ref{lem:NM4}(ii)$ leads to no examples, while if Lemma~$\ref{lem:NM4}(iii)$ holds, then $\delta=1$ 
and $(\Ga,G)\cong(\Ga^+(r,s),H^+(r,s))$, as in Construction~$\ref{con2c}$ . 
\end{lemma}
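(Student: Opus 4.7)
The plan is to mirror the Cayley graph strategy used in Lemmas~\ref{lem:t1}--\ref{lem:i}, identifying in each case an index-two regular subgroup of $G$ and realising $\Ga$ as a Cayley graph on that subgroup.

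Both case (ii) and case (iii) with $\delta=0$ share a common obstruction that I would extract first. Let $\alpha := (a, d^\delta b)$, so that the out-neighbour $y = x^\alpha$, and consider the element $g := (c,d)\alpha \in G$. One computes directly that $x^g = y$, and that $y^g = x$ if and only if $\alpha(c,d)\alpha \in G_x = \{1,(c,d)\}$. Using $aca = c$ together with either $bdb = d$ (for $\delta=0$) or $b^{-1}db = b^{-2}d$ (for $\delta=1$), the element $\alpha(c,d)\alpha$ equals $(c,d)$ when $\delta = 0$ and $(c, b^{-2}d)$ when $\delta = 1$. Since $b^{-2} \ne 1$ for $s \geq 3$, the swap $y^g = x$ occurs precisely when $\delta = 0$; and in both case (ii) and case (iii) with $\delta=0$ the element $g = (c,d)\alpha$ lies in $G$ by direct inspection of the generators. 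Such a $g$ swaps $x$ and $y$ and therefore reverses the arc $(x,y)$, contradicting the $G$-preservation of the edge orientation. Hence case (ii) gives no examples, and in case (iii) necessarily $\delta = 1$.

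For case (iii) with $\delta = 1$ I would take the index-two subgroup $K' := \la (a^2,1), (1,b^2), (a,db)\ra$ of $G$; its elements are precisely $\{(a^m, b^n) : m, n \text{ both even}\} \cup \{(a^m, db^n) : m, n \text{ both odd}\}$, so $|K'| = rs/2 = |V\Ga|$, and $(c,d) \notin K'$ gives $K' \cap G_x = 1$, whence $K'$ acts regularly on $V\Ga$. Define $f : V\Ga \to X^+$ by sending the vertex with $K'$-representative $(a^m, b^n)$ or $(a^m, db^n)$ to $(m, n) \in X^+$, and define $\varphi : G \to H^+(r,s)$ on generators by $(a^2,1) \mapsto \mu^2$, $(1,b^2) \mapsto \nu^2$, $(c,d) \mapsto \tau$, and $(a,db) \mapsto \tau\sigma\mu\nu$. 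The key check is that right multiplication by $(a,db)$ on $K'$ sends the representative of $(m,n) \in X^+$ (with $m, n$ both even) to the representative of $(m+1, 1-n)$, which matches the action of $\tau\sigma\mu\nu$ on $X^+$; together with the analogous checks for the other generators and the conjugation action of $(c,d)$ on $K'$, this establishes that $(f,\varphi)$ is an isomorphism of graph-group pairs, where the edge orientation of $\Ga$ (determined via arc-transitivity and the $(c,d)$-swap) corresponds to the Construction~\ref{con2c}-orientation of $\Ga^+(r,s)$.

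The main obstacle is identifying the correct image of $(a,db)$ in $H^+(r,s)$: the naive guess $\sigma\mu\nu$ would produce the action $(m,n) \mapsto (1-m, n+1)$ on $X^+$, whereas right multiplication by $(a,db)$ yields $(m,n) \mapsto (m+1, 1-n)$, so one has to recognise the needed correction by $\tau$. All the remaining verifications, including that $\varphi$ respects the defining relations of $G$, are routine computations in the dihedral groups $D_{2r}$ and $D_{2s}$.
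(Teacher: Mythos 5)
Your argument hinges, in both halves, on the unproved identification $y=x^{\alpha}$ with $\alpha=(a,d^{\delta}b)$, which you state as if it were definitional. A priori all one knows is that $x^{\alpha}$ lies in $B^{+}\cap C^{+}$, since $\alpha$ induces $a$ on $\Ga_N$ and $d^{\delta}b$ on $\Ga_M$ (and $d$ fixes $C$). In case (iii) this does force $x^{\alpha}=y$, because there $M\times N=\la(a^2,1)\ra\times\la(1,b^2)\ra$ is semiregular with $M\cap N=1$, so an $N$-orbit meets an $M$-orbit in at most one vertex and $B^{+}\cap C^{+}=\{y\}$ --- but you never give this argument. In case (ii) it genuinely fails: there $(1,d)\in N$ fixes $C$, so $B\cap C$ already contains the two vertices $x$ and $x^{(1,d)}$, and $B^{+}\cap C^{+}$ likewise contains a second vertex besides $y$; moreover, in the only sub-case of (ii) that survives the obvious constraints (the paper's $\delta'=1$, where the out-neighbours of $x=1_K$ have the form $(a^{\pm i},b^{\pm j}d)$ in $K=\la(a,b),N\ra$), the vertex $x^{(a,b)}$ is not adjacent to $x$ at all, so your element $g=(c,d)(a,b)$ swaps $x$ with a non-neighbour and reverses no arc. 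Thus your ``common obstruction'' eliminates only part of case (ii); the remaining sub-case requires a different contradiction, which in the paper is that the putative connection set $S_0=\{y,y'\}$ then lies in the index-two subgroup $\la(a^2,1),(1,b^2),(a^i,b^jd)\ra$ of the regular group $K$, contradicting connectedness of $\Ga$. As written, case (ii) is not ruled out.

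The same omission undermines the isomorphism in case (iii) with $\delta=1$. Your checks (which agree with the paper's choice of $K'$, $f$ and $\varphi$) verify only that $(f,\varphi)$ intertwines the $G$-action on $V\Ga$ with the $H^{+}(r,s)$-action on $X^{+}$; that is an isomorphism of permutation groups, not yet of graph--group pairs, because the edge set and orientation of $\Ga$ are determined by which $G_x$-orbit on vertices the pair $\{y,y'\}$ is. You must show $y=x^{(a,db)}$, so that $f$ carries the arcs $(x,y)$ and $(x,y')$ to $\bigl((0,0),(1,1)\bigr)$ and $\bigl((0,0),(-1,-1)\bigr)$, which are the out-arcs at $(0,0)$ in the orientation of Construction~\ref{con2c}; only then does edge-transitivity propagate the correspondence of orientations. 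The paper obtains exactly this point ($y=(a,db)$, i.e.\ $i=j=1$) from the coset description of the $N$- and $M$-orbits combined with the normalisation in Notation~\ref{notation1} that $a$ maps $B$ to $B^{+}$ and $b$ maps $C$ to $C^{+}$; alternatively the singleton-intersection argument above supplies it. With that step added your second half closes; without it, and without a correct elimination of case (ii), the proof is incomplete.
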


\begin{proof}
Here $r,s$ are both even. The approach is similar to that of the previous lemma. If Lemma~\ref{lem:NM4}(ii) holds, let $K:=\la (a,b), N\ra$, and if
Lemma~\ref{lem:NM4}(iii) holds, let $K:=\la (a,d^\delta b), N\ra$ (where $\delta=0$ or $1$). In either case $K$ is a normal subgroup of $G$ of index 2, 
and $K$ is  regular on vertices (since $K\cap G_x=1$, implying that $G=KG_x$).
Thus we may assume that $\Ga=\Cay(K,S_0\cup S_0^{-1})$ for a $2$-element generating set $S_0$ for $K$ 
such that $S_0\cap S_0^{-1}=\emptyset$, and we may identify $x=1_K$, $S_0=\{y,y'\}$.

Suppose first that Lemma~\ref{lem:NM4}(ii) holds. Then $y = (a^i, b^jd^{\delta'})$, for some $i\in\ZZ_r, j\in\ZZ_s, 
\delta'\in\ZZ_2$, with $i,j$ of the same parity. Now $S_0$ is a $G_x$-orbit so $y'=y^{(c,d)}=(a^{-i},b^{-j}d^{\delta'} )$. Since $S_0\cap S_0^{-1}=\emptyset$, we have $y'\ne y^{-1}$, and  
hence $\delta'=1$.
However $S_0$ is then contained in the proper subgroup $\la (a^2,1),(1,b^2),(a^i,b^jd)\ra$ of 
index 2 in $K$, contradicting the fact that $S_0=\{y,y'\}$ generates $K$.

Thus  Lemma~\ref{lem:NM4}(iii) holds. Suppose first that $\delta=0$. Then as $y\in K$, we have $y=(a^i,b^j)$
for some $i\in\ZZ_r, j\in\ZZ_s$ of the same parity.  This implies, as in the previous paragraph, that 
$y'=y^{(c,d)}=(a^{-i},b^{-j})$ is equal to $y^{-1}$, which is a contradiction. Thus $\delta=1$. If
$y=(a^i,b^j)$ with both $i,j$ even, then we again find  $y'=y^{-1}$, a contradiction. Hence $y=(a^i,db^j)$ with both $i,j$ odd. 
Since $N\leq K$, the $N$-orbits are the cosets $N(a^{2k},1)$ and $N(a^{2k+1},db)$, for $0\leq k<r/2$, and as in Notation~\ref{notation1}, 
the element $(a,db)\in G$ maps $B=1^N=N$ to $B^+=y^N=N(a^i,db^j)=N(a^i,db)$ (since $(1,b^{2})\in N$). However $(a,db)\in K$ acts by right multiplication, 
and maps $B$ to $N(a,db)$. Hence $i=1$ and $y=(a,db^j)$.
Also $M$ lies in $K$, and the $M$-orbits are the cosets $M(1,b^{2k})$ and $M(a, db^{2k+1})$, for $0\leq k<s/2$. 
As in Notation~\ref{notation1}, the element $(a,db)\in G$ maps $C=1^M=M$ to $C^+=y^M=M(a,db^{j})$. 
However $(a,db)\in K$ acts by right multiplication, 
and maps $C$ to $M(a,db)$. Hence $j=1$ and $y=(a,db)$, $y'=y^{(c,d)}=(a^{-1},db^{-1})$.   

Each vertex $w$ of $\Ga=\Cay(K,S_0\cup S_0^{-1})$ has out-neighbours $yw, y'w$, so for $i, j$ even, 
$w=(a^i,b^j)$ has out-neighours $(a^{i+1}, db^{j+1})$ and $(a^{i-1}, db^{j-1})$, while for $i, j$ odd, 
$w=(a^i,db^j)$ has out-neighours $(a^{i+1}, b^{j-1})$ and $(a^{i-1}, b^{j+1})$. 
Thus the map $f:V\Ga\rightarrow X^+$ defined by
$(a^i,b^j)\mapsto(i,j)$ if $i, j$ are both even, and  $(a^i,db^j)\mapsto(i,j)$ if $i, j$ are both odd, determines a graph
isomorphism from $\Ga$ to $\Ga^+(r,s)$, which maps each oriented edge $w\rightarrow yw, w\rightarrow y'w$ of $\Ga$ to an oriented edge of $\Ga^+(r,s)$ with the edge-orientation of Construction~\ref{con2c}. Also the map $(a^2,1)\mapsto \mu^2, (1,b^2)\mapsto \nu^2, (c,d)\mapsto \tau, (a,db)\mapsto \tau\sigma\mu\nu$,
extends to a group isomorphism 
$\varphi:G\rightarrow H^+(r,s)$, and we obtain
an isomorphism $(f,\varphi)$ from the pair $(\Ga,G)$ to the pair  $(\Ga^+(r,s),H^+(r,s))$ in Construction~\ref{con2c}.
\end{proof}

\begin{lemma}\label{lem:iv}
If Lemma~$\ref{lem:NM4}(iv)$ holds then either
\begin{enumerate}
 \item[(a)] $r$ is odd and $s$ is even,  $M=\la (a,1)\ra$, $N=\la (1,b^2), (1,bd)\ra$, and $(\Ga,G)\cong (\Ga(r,s),H(r,s))$ of Construction~$\ref{con2c}$; or 
 \item[(b)] $r$ is even and $s$ is odd,  $M=\la (a^2,1), (ac,1)\ra$, $N=\la (1,b)\ra$, and $(\Ga,G)\cong (\Ga(s,r),H(s,r))$ of Construction~$\ref{con2c}$.
\end{enumerate}
\end{lemma}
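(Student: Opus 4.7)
The plan is to follow the pattern of Lemmas~\ref{lem:i} and~\ref{lem:ii}. Since $K:=M\times N$ is regular on $V\Ga$ in case~(iv), identify $\Ga$ with the Cayley graph $\Cay(K,S_0\cup S_0^{-1})$, where $S_0=\{y,y'\}$ is a $G_x$-orbit with $S_0\cap S_0^{-1}=\emptyset$, $x=1_K$, and $y'=y^{(c,d)}$ (with $(c,d)$ acting as the natural conjugation automorphism of $K$ inside $G$). Lemma~\ref{lem:NM4}(iv) offers four possible choices of $(M,N)$ according as each is cyclic or of dihedral type, and the argument rules out the two ``extreme'' choices before identifying the two remaining mixed choices with cases~(a) and~(b).

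Suppose first that both $M=\la(a,1)\ra$ and $N=\la(1,b)\ra$ are cyclic. Any $y=(a^i,b^j)$ then satisfies $y^{(c,d)}=(a^{-i},b^{-j})=y^{-1}$, which puts $y'$ in $S_0\cap S_0^{-1}$, a contradiction. Suppose instead that both $M$ and $N$ are of dihedral type (so $r,s$ are both even). Introduce the parity epimorphism $p\colon K\to Z_2\times Z_2$ with kernel $\la(a^2,1),(1,b^2)\ra$; since the automorphism $(c,d)$ of $K$ stabilises each factor of this kernel (inverting its rotation generator), one has $p(y^{(c,d)})=p(y)$, so $\la p(y),p(y')\ra=\la p(y)\ra$ has order at most $2$ in $Z_2\times Z_2$, whence $|\la y,y'\ra|\leq rs/2<|K|$, contradicting the connectedness of $\Ga$.

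Only the two mixed choices remain, and since they are exchanged by swapping the two coordinates it suffices to treat the one in~(a): $M=\la(a,1)\ra$ and $N=\la(1,b^2),(1,bd)\ra$, with $s$ even. The orbit/coset analysis of Notation~\ref{notation1}, carried out exactly as in Lemma~\ref{lem:ii}, pins down $y=(a,bd)$ and $y'=(a^{-1},b^{-1}d)$. One computes $y^2=(a^2,1)$ and $yy'=(1,b^2)$, so in the quotient $K/\la(1,b^2)\ra\cong Z_r\times Z_2$ the image of $\la y,y'\ra$ is the cyclic subgroup generated by $(a,\overline{bd})$, of order $\lcm(r,2)$. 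The requirement $\la y,y'\ra=K$ (equivalent to connectedness of $\Ga$) is therefore equivalent to $\lcm(r,2)=2r$, that is, to $r$ being odd. The identification with Construction~\ref{con2c} is then given by the bijection $f\colon K\to X$ defined by $(a^i,b^{2k})\mapsto(i,2k)$ and $(a^i,b^{2k+1}d)\mapsto(i,2k+1)$, together with the group isomorphism sending $(a,1)\mapsto\mu$, $(1,b^2)\mapsto\nu^2$, $(1,bd)\mapsto\sigma\nu$, $(c,d)\mapsto\tau$, yielding $(\Ga,G)\cong(\Ga(r,s),H(r,s))$. Case~(b) follows from the same argument with the roles of the two factors interchanged, yielding $(\Ga,G)\cong(\Ga(s,r),H(s,r))$ and the parity condition $s$ odd.

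The main obstacle is the orientation check in the final identification: because Construction~\ref{con2c} uses an edge-orientation whose sign flips with the parity of the second coordinate, one must verify separately that right multiplication by $y$ and $y'$ sends an ``even'' vertex $(a^i,b^{2k})$ to out-neighbours of the form $(i\pm 1,2k\pm 1)$ and an ``odd'' vertex $(a^i,b^{2k+1}d)$ to out-neighbours of the form $(i\pm 1,2k\mp 1)$. This reduces to the identities $bd\cdot b^{2k}=b^{2k+1}d$ and $bd\cdot b^{2k+1}d=b^{-2k}$ in $D_{2s}$, after which everything else is routine bookkeeping along the lines of Lemma~\ref{lem:ii}.
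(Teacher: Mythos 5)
Your overall route is the paper's: identify $\Ga$ with $\Cay(K,S_0\cup S_0^{-1})$ for the regular subgroup $K=M\times N$, eliminate the impossible choices of $(M,N)$, pin down $S_0$ by the coset analysis of Notation~\ref{notation1}, and read off the isomorphism with Construction~\ref{con2c}. The one genuinely different ingredient is your exclusion of the case where both $M$ and $N$ are of dihedral type: the paper never isolates this case, but instead runs the coset analysis with dihedral $M$ and finds that $y=(ca,db)$ would be an involution, contradicting $S_0\cap S_0^{-1}=\emptyset$. Your alternative -- that $(c,d)$ acts trivially on $K/\la(a^2,1),(1,b^2)\ra\cong Z_2\times Z_2$, so $p(y')=p(y)$ and $\la S_0\ra$ lies in a proper subgroup, contradicting connectedness -- is correct and arguably cleaner, since it disposes of that case before any coset computation. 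Your derivation of ``$r$ odd'' via the quotient $K/\la(1,b^2)\ra\cong Z_r\times Z_2$ is also a valid (slightly more explicit) version of the paper's one-line appeal to $\la S_0\ra=K$.

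The explicit computations in the mixed case, however, contain sign/convention errors that you would need to repair. Under the paper's conventions (edges of $\Cay(K,S)$ are $\{k,sk\}$, $K$ acting by right multiplication, so the out-neighbours of $w$ are $yw$ and $y'w$), the coset analysis forces $y=(a,db)=(a,b^{-1}d)$ and $y'=(a^{-1},db^{-1})$, not $y=(a,bd)$; the element $(a,bd)=(a,db^{-1})$ is neither $y$ nor $y'$. Moreover your two closing identities are computed under incompatible readings: $b^{2k}\cdot(bd)=b^{2k+1}d$ is multiplication of $w$ by $y$ on the right, while $bd\cdot b^{2k+1}d=b^{-2k}$ multiplies on the left, and with either reading applied consistently your map $f\colon(a^i,b^{2k+1}d)\mapsto(i,2k+1)$ sends an out-neighbour of $(i,2k)$ to $(i+1,1-2k)$ rather than $(i+1,2k+1)$, so as stated it does not respect the orientation of Construction~\ref{con2c}. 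These are repairable bookkeeping slips (replace $y$ by $(a,db)$ and parametrise the reflections as $db^{2k+1}$, as the paper does), not a conceptual gap, but the final identification is not correct as written.
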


\begin{proof}
Suppose that Lemma~\ref{lem:NM4}(iv) holds, so $|M|=r, |N|=s, |V\Ga|=rs$. There are two possibilities for $M$ and two for $N$. 
For any of these $M, N$, the group
$K:=M\times N$ has index $2$ in $G$ and acts regularly on the vertices of $\Ga$. 
Hence we may assume that $\Ga=\Cay(K,S_0\cup S_0^{-1})$ for a $2$-element generating set $S_0$ for $K$ 
such that $S_0\cap S_0^{-1}=\emptyset$, and we may identify $x=1_K$, $S_0=\{y,y'\}$.

Now $y=mn$ for some $m\in M, n\in N$, and $y'$ is the image of $y$ under conjugation by $(c,d)\in G_x$. 
Since $y'\ne y^{-1}$, it is not possible for both $m^{(c,d)}=m^{-1}$ and $n^{(c,d)}=n^{-1}$ to hold.
Therefore $M,N$ are not both cyclic. Interchanging $r$ and $s$ if necessary, we may assume that 
$n^{(c,d)}\ne n^{-1}$, and it follows from Lemma~\ref{lem:NM4}(iv) that $s$ is even and $N=\la (1,b^2), (1,bd)\ra$. 
We will prove that (a) holds. (In the case where $m^{(c,d)}\ne m^{-1}$ it will then follow,
on interchanging $r$ and $s$ in our arguments, that (b) holds.)   

Since $n^{(c,d)}\ne n^{-1}$, the element $n$ satisfies $n=(1,db^j)$ for some $j\in\ZZ_s$, and we have
$y=m(1,db^j)$, $y'=m^{(c,d)}(1,db^{-j})$. 
Now $K$ acts by right multiplication and $M\leq K$, so the $M$-orbits are $C_{2k} := M(1,b^{2k})$ and 
$C_{2k+1} := M(1,db^{2k+1})$ for $0\leq k<\frac{s}{2}$. The vertex $x$ lies in $C=C_0$, and $y$ lies in the image
$C^+$ of $C$ under the action of the element $(c,b)=(1,bd)(c,d)\in G$ (see Notation~\ref{notation1}, 
since this element lies in $G$ and induces $b$ on $\Ga_M$). However $(c,b)$ maps $C$ to $(M(1,bd))^{(c,d)}=M(1,b^{-1}d)=M(1,db)$,
and since this set must contain $y=m(1,db^j)$, we have $j=1$. Thus $y=m(1,db)$.

The $N$-orbits are the subsets $Nu$ for $u\in M$. The vertex $x$ lies in $B=N$,  and $y$ lies in the image
$B^+$ of $B$ under the action of the element $(a,1)$ if $M=\la (a,1)\ra$, or $(a,d) =(ac,1)(c,d)$ if 
$M=\la (a^2,1),(ac,1)\ra$ (see Notation~\ref{notation1}, 
since these elements lie in $G$ and induce $a$ on $\Ga_N$). 
In the second case, the element $(a,d)$ maps $B$ to $(N(ac,1))^{(c,d)}=N(a^{-1}c,1)=N(ca,1)$,
and since this set must contain $y=m(1,db)$ we have $m=(ca,1)$. However this 
implies that $y=(ca,db)$ which has order 2 and hence $y^{-1}=y\in S_0$, a contradiction. 
Therefore $M=\la (a,1)\ra$. The element $(a,1)$ maps $B$ to $B^+=N(a,1)$ and since this set contains 
$y$ it follows that $m=(a,1)$ and $y=(a,db)$, $y'=(a^{-1},db^{-1})$. Now the fact that $\la S_0\ra=K$ 
implies that $r$ must be odd.

It remains to identify the graph-group pair. The oriented edges of $\Ga$ are the pairs $u\rightarrow yu$ 
and $u\rightarrow y'u$ for $u\in K$. An easy computation shows that, for each $i\in\ZZ_r$ and for $0\leq j<\frac{s}{2}$,
\begin{align*}
 (a^i,b^{2j})&\rightarrow (a^{i+1},db^{2j+1}) &\mbox{and}& &(a^i,b^{2j})&\rightarrow&(a^{i-1},db^{2j-1}),\\
 (a^i,db^{2j+1})&\rightarrow (a^{i+1},b^{2j}) &\mbox{and}& &(a^i,db^{2j+1})&\rightarrow&(a^{i-1},b^{2j+2}).
\end{align*}
It follows that the bijection $f: K\rightarrow \ZZ_r\times \ZZ_s$ given by 
$f:(a^i,b^{2j})\mapsto (i,2j)$ and $f:(a^i,db^{2j+1})\mapsto (i,2j+1)$ defines a graph isomorphism 
from $\Ga$ to the graph $\Ga(r,s)$ such that the oriented edges $u\rightarrow yu$ 
and $u\rightarrow y'u$ of $\Ga$ are mapped to oriented edges according to the edge-orientation defined in Construction~\ref{con2c}.
Also the map $\varphi$ given by  
\[
\varphi: (a,1)\mapsto \mu,\ \varphi: (1,b^2)\mapsto\nu^2,\ \varphi: (1,db)\mapsto \tau\sigma\nu,\ \varphi: (c,d)\mapsto \tau 
\]
extends to an isomorphism from $G$ to the group $H(r,s)$ of Construction~\ref{con2c}.
Moreover the pair $(f,\varphi)$ determines an isomorphism from $(\Ga,G)$ to $(\Ga(r,s),H(r,s))$.
\end{proof}

\subsection{Proof of Theorem~\ref{thm3}}\label{sub:thm3}
Let $(\Ga,G)\in\OG(4)$ have independent cyclic normal quotients $\Ga_N, \Ga_M$ of orders $r, s$ respectively.
We may assume that $N, M$ consists of all elements of $G$ which fix setwise each $N$-orbit, or each $M$-orbit,
respectively. Part (a) of Theorem~\ref{thm3} follows from Lemma~\ref{lem:NM2}(a), and we may assume that $\Ga_N$
is $G$-unoriented.
Part (b)  of Theorem~\ref{thm3} follows from Lemma~\ref{lem:NM2}(b), and (as explained just before 
Notation~\ref{notation1}) for the rest of the proof we may 
assume that $N\cap M=1$. If $\Ga_M$ is $G$-oriented then, by Lemmas~\ref{lem:NM3},~\ref{lem:t1}, 
and~\ref{lem:t2}, $|G_x|=2$ and line 1 or 2 of Table~\ref{tbl1} holds.  
If $\Ga_M$ is $G$-unoriented then, by Lemmas~\ref{lem:NM4},~\ref{lem:i},~\ref{lem:ii},  
and~\ref{lem:iv}, $|G_x|=2$ and one of the lines 3, 4, 5 or 6 of Table~\ref{tbl1} holds. This completes the proof.

\section{Weak metacirculants with independent cyclic normal quotients} \label{sec:wm} 

First we see that under Hypothesis~\ref{hyp1},
it is only the graphs in Class I of
\cite{MS} for which the quotient $\Ga_R$, defined as in Section~\ref{sec:intro},
can possibly be a cyclic normal quotient.

\begin{lemma}\label{lem:classI} 
Suppose that Hypothesis~$\ref{hyp1}$ holds.

\begin{enumerate}
\item[(a)] If $\Ga_R$ is a normal quotient of $(\Ga,G)$,
then $\Ga$ lies in Class I or IV of \cite{MS} relative to 
$(\rho,\lambda)$, and if $\Ga_R$  
is cyclic then $\Ga$ lies in Class I.

\item[(b)] Conversely, for $\Ga$ in Class I or IV
 relative to $(\rho,\lambda)$, either $H$ is regular on vertices,
or $\Ga$ lies in Class I and $(\Ga, H) \in\OG(4)$
with $\Ga_R$ an $H$-oriented cyclic normal quotient.  
\end{enumerate}
\end{lemma}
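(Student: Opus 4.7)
The plan for both parts is to exploit the uniformity statement from \cite[Proposition~3.1]{janc1}: whenever $R \unlhd G$ and $\Ga_R$ is the associated normal quotient of $(\Ga,G)$, there is a single constant $\ell$ (independent of the adjacent pair of $R$-orbits) such that every vertex has exactly $\ell$ neighbours in each adjacent $R$-orbit. Since $\Ga$ has valency $4$, this forces the number of inter-$R$-orbit edges at a vertex $x$ to equal $k\ell$, where $k$ is the number of $R$-orbits adjacent to $x^R$, leaving $4-k\ell$ intra-orbit edges; hence only finitely many edge-distribution patterns are possible.

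For part (a), first I would enumerate the feasible pairs $(k,\ell)$ with $k\ell \leq 4$ and compare each pattern with the four characterisations defining Classes I--IV in \cite{MS}. Classes II and III are distinguished there by edge distributions in which different pairs of adjacent $R$-orbits contribute different numbers of edges at a single vertex, which is incompatible with a uniform constant $\ell$; hence only Class I (inter-orbit edges forming a $2$-regular pattern across a cycle of $R$-orbits, no intra-orbit edges) and Class IV (intra-orbit edges present, with a uniform inter-orbit count) are compatible with $\Ga_R$ being a normal quotient in our sense. If in addition $\Ga_R$ is cyclic then $\Ga_R$ has valency $2$; intra-orbit edges would force $k\ell<4$ with $k\le 2$, and a brief enumeration of the remaining options shows that the only one producing a cycle as the quotient is $k=\ell=2$ with no intra-orbit edges, which is precisely Class I.

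For part (b), I would split on the order of the vertex stabiliser $H_x$. Since $\rho$ has $m$ cycles of length $n$ on vertices and $\la\lambda\ra$ permutes them transitively, $H$ is vertex-transitive with $|H|=mn|H_x|$; regularity on vertices is equivalent to $H_x=1$ (equivalently, $\lambda^m$ fixing no vertex). If $H$ is regular then the first conclusion holds, so I may assume $H_x \neq 1$. In Class IV, the edges inside $x^R$ pair up specific neighbours of $x$, and combining this with the alternating-cycle analysis in \cite{MS} together with the fact that $R$ is a cyclic normal subgroup of $H$, I would argue that a nontrivial element of $H_x$ cannot simultaneously preserve these intra-orbit pairings and the inter-orbit edges, so Class IV contributes only the regular subcase. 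In Class I any nontrivial element of $H_x$ must act as an involution on the four neighbours of $x$ and, by the Class~I structure (two neighbours in each of two adjacent $R$-orbits), must swap the two neighbours lying in each of these $R$-orbits; this gives $(\Ga,H)\in\OG(4)$, and declaring one of the two pairs to be out-neighbours and the other to be in-neighbours yields an $H$-invariant orientation that descends coherently to $\Ga_R$, making $\Ga_R$ an $H$-oriented cyclic normal quotient.

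The hard part will be the fine combinatorial case analysis needed to rule out Classes II and III in (a) and to exclude nontrivial stabilisers in Class IV in (b); both rely on invoking the explicit alternating-cycle characterisations of \cite{MS} and matching them carefully against our uniformity requirement for normal quotients.
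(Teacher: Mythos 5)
Part (a) of your proposal follows essentially the paper's route: the uniform constant from \cite[Proposition 3.1]{janc1} is exactly what rules out Classes II and III (whose defining edge-distributions are non-uniform across adjacent $R$-orbit pairs), and the valency count pins down Class I when $\Ga_R$ is cyclic. One small repair: the paper also extracts from \cite[Theorem 1.1]{janc1} that a normal quotient has \emph{no} intra-orbit edges at all, which you need to make your enumeration close --- as stated, the pattern $k=2$, $\ell=1$ with two intra-orbit edges per vertex also produces a $2$-valent (cyclic) quotient combinatorially, and your sketch does not eliminate it.

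Part (b) contains a genuine gap. You describe Class IV as the class with ``intra-orbit edges present''; in fact Class IV is the class in which the four neighbours of a vertex lie in four \emph{distinct} $R$-orbits (this is exactly what the paper uses), so your proposed argument for excluding nontrivial stabilisers in Class IV rests on a false premise and is in any case only a promissory note. The missing idea is simple and drives both cases at once: the group induced by $H$ on the $m\geq 3$ $R$-orbits is cyclic and transitive, hence regular, so $H_x$ fixes every $R$-orbit setwise; if $H_x\neq 1$ then (by connectivity) $H_x$ moves some neighbour $y$ of $x$, and $y^{H_x}$ stays inside one $R$-orbit, so that orbit contains two neighbours of $x$. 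This is impossible in Class IV, which therefore only yields the regular subcase, and in Class I it shows that $H_x$ interchanges the two neighbours of $x$ in some adjacent orbit. Your treatment of Class I also inverts the logic of the orientation: under Hypothesis~\ref{hyp1} we have $\rho,\lambda\in G$ with $(\Ga,G)\in\OG(4)$, so $H\leq G$ already preserves a \emph{given} orientation --- you cannot ``declare'' one pair to be out-neighbours. What must be argued is that the two neighbours of $x$ swapped by $H_x$ necessarily have the same orientation relative to $x$ (both out or both in), precisely because $H$ preserves the existing orientation; this is what simultaneously gives edge-transitivity of $H$ (hence $(\Ga,H)\in\OG(4)$) and shows that the orientation descends to make $\Ga_R$ an $H$-oriented cycle.
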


\begin{proof}
(a)  Suppose that $\Ga_R$ is equal to $\Ga_N$ for some
$N \unlhd G$ (that is to say the sets of $R$-orbits and $N$-orbits 
on vertices are the same). 
By \cite[Thm 1.1]{janc1}, there are no edges joining
vertices in the same $R$-orbit, and moreover there is a 
constant $k$ such that, if there is an edge between some vertex in the 
$R$-orbit $X_i$ and some vertex in $X_j$, then each vertex of $X_i$ is 
adjacent to exactly $k$ vertices in $X_j$. These conditions are not 
satisfied for Classes II and III of \cite[Section 3]{MS}, and 
hence only Classes I and IV can
correspond to normal quotients of $(\Ga,G)$. Moreover, since
the quotients for Class IV have valency $4$, it is only the quotients 
for Class I which can arise as cyclic normal quotients. 

(b) Suppose that $\Ga$ is in Class I or IV of \cite{MS}
 relative to $(\rho,\lambda)$, and that $H$ is not regular on vertices.
Then $H_x\ne 1$ for $x$ in the $R$-orbit $X_1$, and since the action induced by 
$H$ on the $R$-orbits is cyclic (induced by $\lambda$), $H_x$ fixes 
each $R$-orbit setwise. 
If  $H_x$ fixed each vertex adjacent to $x$ then, since $\Ga$ 
 is connected, it would follow that $H_x$ would fix all vertices, 
contradicting our assumption that $H_x\ne 1$. Thus $H_x$ 
moves some vertex $y$ adjacent to $x$, say $y\in X_2$,
and hence $X_2$ contains at least two vertices adjacent to $x$. 
For $\Ga$ in Class IV, the four vertices adjacent to $x$ lie in four
 distinct $R$-orbits and hence we conclude that $\Ga$ lies in 
Class I of \cite{MS}, and that two $R$-orbits distinct from 
$X_1$, namely $X_2$ and, say, $X_3$, each 
contain two vertices adjacent to $x$. Thus $H_x$ interchanges the two
vertices in $\Ga(x)\cap X_2$, and since $H$ preserves the
edge-orientation, it follows that $H$ is edge-transitive, so $(\Ga,H)\in\OG(4)$,
with $\Ga_R$  a cyclic normal quotient. Finally the fact that
 $H_x$ interchanges the two
vertices in $\Ga(x)\cap X_2$ implies that $\Ga_R$ is an $H$-oriented cycle.
(This last fact can also be deduced from the proofs of 
\cite[Lemmas 4.2 and 4.3]{MS} although it is not explicitly stated there.)  
\end{proof}

 We give examples to show that, for a given $(\Ga,G)\in\OG(4)$,
 different pairs $(\rho,\lambda)$  can lead to different behaviours 
for the quotient $\Ga_R$, even with the same subgroup $H = \la \rho, \lambda\ra$.
 
 \begin{construction}\label{ex4} {\rm
Let $r$ be an odd integer, $r\geq3$, and let $\Ga=\Ga(r,r)$ and $G=G(r,r)$,
as in line 1 of Table~\ref{tbl:groups}. Then, using Lemma~\ref{lem:con1}
we see that $(\Ga,G)\in\OG(4)$, and that, for $\rho,\lambda$ as in (a), (b) or (c) below,
$\Ga$ is an $(r,r)$-metacirculant relative to $(\rho,\lambda)$, 
$H:=\la\rho,\lambda\ra = \la\mu,\nu\ra$ is regular, and the quotient $\Ga_R$, for
$R:=\la\rho\ra$, is a cycle $C_r$. 

\begin{enumerate}
\item[(a)] Let $\rho=\mu\nu$ and $\lambda=\mu$.  
Then $\Ga_R$ is $G$-oriented but is not a normal quotient of $(\Ga,G)$ 
since $N_G(R)=H$. (The $R$-orbits are $\{ (i+\ell,\ell)\,|\, \ell\in\mathbb{Z}_r \}$,
for $i\in\mathbb{Z}_r$.)

\item[(b)] Let $\rho=\nu$ and $\lambda=\mu$.  
Then $\Ga_R$ is a normal quotient of $(\Ga,G)$ that is 
not $G$-oriented.

\item[(c)] Let $\rho=\mu$ and $\lambda=\nu$.  
Then $\Ga_R$ is a $G$-oriented normal quotient of $(\Ga,G)$.
\end{enumerate}
}
\end{construction}

Now we study the case where $(\Ga,G)$ has independent cyclic normal 
quotients and where $\Ga_R$ is a $G$-oriented normal quotient, so $\Ga$ is of Class I
of \cite{MS}, by Lemma~\ref{lem:classI}.

\begin{lemma}\label{lem:gar}
Suppose that Hypothesis~\ref{hyp1} holds with $\Ga_R \cong C_m$ a 
$G$-oriented normal quotient of $(\Ga,G)$, and that $(\Ga,G)$ has 
independent cyclic normal quotients. Then for a possibly different pair 
$(\rho,\lambda)$ of elements from $H$, the same conditions all hold and 
$(\Ga,G)$ has a possibly different
pair of independent cyclic normal quotients, one of which is $\Ga_R$.  
\end{lemma}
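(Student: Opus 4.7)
The plan is to show that the original pair $(\rho,\lambda)$ already suffices; no genuine change of pair is required. The key observation is that, once $\Ga_R$ is $G$-oriented, it is automatically independent of the $G$-unoriented member of the given independent pair of cyclic normal quotients.

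Let $\Ga_{N_0}, \Ga_{M_0}$ denote the given independent cyclic normal quotients of $(\Ga,G)$. By Theorem~\ref{thm3}(a), we may assume that $\Ga_{N_0}$ is $G$-unoriented. Since $\Ga_R$ is $G$-oriented by hypothesis, the pair $\{\Ga_R, \Ga_{N_0}\}$ consists of one $G$-oriented and one $G$-unoriented cyclic normal quotient. By the remark in Section~\ref{sec:intro} immediately following Definition~\ref{def:indept}, such a pair is necessarily independent: otherwise, writing $K = \tilde R \cap \tilde{N}_0$, the quotient $\Ga_K$ would be a cycle, and then both $\Ga_R$ and $\Ga_{N_0}$ would inherit the orientation status of $\Ga_K$ (oriented or unoriented together), contradicting that they differ in orientation type.

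Therefore, retaining the original pair $(\rho,\lambda)$, Hypothesis~\ref{hyp1} continues to hold, $\Ga_R$ remains a $G$-oriented cyclic normal quotient of $(\Ga,G)$, and $\{\Ga_R, \Ga_{N_0}\}$ is the required independent pair of cyclic normal quotients, one of which is $\Ga_R$. There is no substantive obstacle in this argument; the allowance for a ``possibly different pair'' in the statement is merely a bookkeeping convenience to reposition $\Ga_R$ as a member of an independent pair (rather than a cyclic normal quotient extraneous to the originally specified pair $\Ga_{N_0}, \Ga_{M_0}$). The only technical content is the orientation dichotomy already isolated in the discussion between Definition~\ref{def:indept} and Theorem~\ref{thm3}.
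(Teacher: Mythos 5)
Your proof is correct, and it is a genuinely different and much shorter route than the one in the paper. The crux — that a $G$-oriented and a $G$-unoriented cyclic normal quotient are automatically independent — is precisely the observation recorded in the paragraph following Definition~\ref{def:indept}, and it is sound: if $K$ is the intersection of the kernels of the actions on $\Ga_R$ and $\Ga_{N_0}$ and $\Ga_K$ were a cycle, then every further cyclic quotient of length at least $3$ would inherit the orientation status of $\Ga_K$ (adjacent blocks of the coarser cycle arise only from adjacent blocks of $\Ga_K$), contradicting that $\Ga_R$ is $G$-oriented while $\Ga_{N_0}$ is not; and $\Ga_K$ cannot be $K_1$ or $K_2$ since it covers a cycle. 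Combining this with Theorem~\ref{thm3}(a) (equivalently Lemma~\ref{lem:NM}), which supplies a $G$-unoriented member $\Ga_{N_0}$ of the given independent pair, immediately yields the independent pair $\{\Ga_R,\Ga_{N_0}\}$ with no change of $(\rho,\lambda)$ at all. The paper instead runs a case analysis on how the kernel $\hat R$ sits relative to $M$ (whether $\Ga_R$ is a quotient of $\Ga_M$, or conversely, or neither), replacing $(\rho,\lambda)$ and $R$ by $(\rho',\lambda')$ and $M\cap H$ when $\Ga_M$ properly covers $\Ga_R$. What that extra work buys is explicit control over how the new oriented quotient relates to the originally given pair — in particular the option of enlarging $\Ga_R$ to $\Ga_{M\cap H}=\Ga_M$, which is what the parenthetical remark in the proof of Corollary~\ref{cor:wm} (about $\Ga_R$ being replaced by an oriented cyclic normal quotient of order a proper multiple of $m$) refers to. For the lemma as stated, and for its application in Corollary~\ref{cor:wm} where Theorem~\ref{thm3} is reapplied to the new pair in any case, your argument suffices.
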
 

\begin{proof}
Let $\Ga_M, \Ga_N$ be independent cyclic normal quotients of $(\Ga,G)$, for 
normal subgroups $M, N$ of $G$. We may assume that $M$ (respectively $N$) 
is equal to the kernel of the $G$-action on $\Ga_M$ (respectively $\Ga_N$). 
Let $\ov{\Ga}=\Ga_{M\cap N}, \ov{G}=G/(M\cap N)$, etc., as in Lemma~\ref{lem:NM2}.
Since $\Ga_M, \Ga_N$ are independent, $(\ovGa, \ovG)\in\OG(4)$.
Moreover, by Theorem~\ref{thm3}, the stabiliser $G_x$ of a vertex $x$ has order $2$. 
Thus $G=HG_x$ and $|G:H|\leq 2$, since $H$ is vertex-transitive. 

Since $\Ga_R$ is a normal quotient of $(\Ga,G)$, there is a normal subgroup 
$\hat R$ of  $G$, containing $R$, with the same vertex-orbits as $R$, 
and such that $\hat R$ is  equal to the kernel of the $G$-action on $\Ga_R$.
Moreover, since $\Ga_R=\Ga_{\hat R}$ is $G$-oriented, the group $G/\hat R$ 
induced on $\Ga_R$ is cyclic of order $m$, and is therefore 
equal to the cyclic group induced by $\la \lambda\ra$ on $\Ga_R$, so 
$G=\la \hat R, \lambda\ra$. 

Suppose first that some $R$-orbit is a union of $j$ of the 
$M$-orbits in $V\Ga$, where $j\geq1$. Then each $R$-orbit is a union of $j$ of the 
$M$-orbits, and $\Ga_R$ is isomorphic to a quotient of $\Ga_M$. Since $m\geq3$
and $\Ga_R$ is $G$-oriented, it follows that $\Ga_M$ is also $G$-oriented. 
Thus $G$ induces a regular cyclic group on $\Ga_M$, and as $H$ is 
vertex-transitive, $H$ induces the same group, so $G=\la M, \lambda'\ra$ for 
some element $\lambda'\in H$. Moreover we may choose $\lambda'$ such that 
$\lambda, \lambda'$ induce the same action on $\Ga_R$, and hence 
$\lambda'=\lambda \rho^i$ for some $i$. Also $M\cap H$ is the subgroup 
$\la \rho'\ra$, where $\rho'=\rho^j$, and Hypothesis~\ref{hyp1}
holds with $(\rho',\lambda')$ in place of $(\rho, \lambda)$, and
$M\cap H$ in place of $R$ (noting that $\rho^\lambda = \rho^r$ implies that
$(\rho')^{\lambda'}=(\rho')^r$, and $\gcd(r,n)=1$ implies that $\gcd(r, o(\rho'))=1$). 
Since $\Ga_M$ is $G$-oriented, $M$ contains $G_x$
by Lemma~\ref{lem:N}(b). Hence $M=(M\cap H)G_x$, $\Ga_{M\cap H}=\Ga_M$ (the `new $\Ga_R$'),
and all the assertions hold relative to  $(\rho',\lambda')$, and the
normal quotients $\Ga_N, \Ga_{M\cap H}$.

Thus we may assume that $\hat R$ does not contain $M$. 
Suppose next that $M$ properly contains $\hat R$, that is, 
$\Ga_M$ is a quotient of $\Ga_R$. Then $\Ga_N, \Ga_R$ are independent
cyclic normal quotients and all the assertions hold without changing 
$(\rho,\lambda)$. So now we may assume also that $M$ does not contain $\hat R$.
Let $T:= R\cap M$ and note that $T\leq H$ since $R\leq H$. 
If $\Ga_T$ is not cyclic then  $\Ga_M, \Ga_R$ are independent
cyclic normal quotients and all the assertions hold without changing 
$(\rho,\lambda)$. Assume then that $\Ga_T$ is cyclic. Since
$\Ga_M$ is a quotient of $\Ga_T$, the pair $\Ga_N, \Ga_T$ are 
independent cyclic normal quotients, and as $T\subseteq R$, each $R$-orbit 
is a union of $T$-orbits. Thus the arguments of the previous paragraph 
may be used to replace $(\rho,\lambda)$ by a new pair
$(\rho', \lambda')$ from $H$, and replace $R$ by $T$, and $\Ga_R$ by 
$\Ga_T$, so that all assertions hold for the independent cyclic 
normal quotients $\Ga_N, \Ga_T$.   
\end{proof}

Finally we prove Corollary~\ref{cor:wm}.

\medskip
\noindent\emph{Proof of Corollary~\ref{cor:wm}.}\quad 
Suppose that Hypothesis~\ref{hyp1} holds with $\Ga_R \cong C_m$ a 
$G$-oriented cyclic normal quotient of $(\Ga,G)\in\OG(4)$, where $m\geq3$.
Suppose also that, as in Theorem~\ref{thm3}, $(\Ga,G)$ has independent cyclic normal quotients
$\Ga_N \cong C_r, \Ga_M \cong C_s$, where $N, M$ are the kernels of the $G$-actions on 
$\Ga_N, \Ga_M$ respectively. 
By Theorem~\ref{thm3}, we may assume that $\Ga_N$ is $G$-unoriented, and
by Lemma~\ref{lem:gar}, we may assume that one of 
the independent cyclic normal quotients is $\Ga_R$. (Note that, from the proof of 
Lemma~\ref{lem:gar}, $\Ga_R$ may be replaced by a cyclic $G$-oriented normal 
quotient of order a proper multiple of the original 
$m$ in Hypothesis~\ref{hyp1}, but in the exposition we can, and will, continue to use 
$m$ as the order of $\Ga_R$.) Since
$\Ga_R$ is $G$-oriented, we have $\Ga_R=\Ga_M \cong C_m$, so $m=s$.  

By Theorem~\ref{thm3} again, setting $T:= M\cap N$, $\ovG := G/T$ and
$\ovGa :=\Ga_{T}$, the pair  $(\Ga,G)$ is a normal cover of 
$(\ovGa,\ov G)\in\OG(4)$, and $\ovGa, \ovG$ and $\ovGa_{\ov{M}} \cong \Ga_M \cong C_{\ov{m}}$
are as in line 1 or 2 of Table~\ref{tbl1}, where $\ov{M}=M/T$ and $\ov{m}=m=s$. 
Let the order of $\ovGa$ be $\ov{m}\,\ov{n}$. Then $\ov{n}$ is $r$ or $r/2$ for 
line 1 or 2 of Table~\ref{tbl1}, respectively. Thus, so far we have proved that
$\ov{m}, \ov{n}, \ovGa, \ovG$, and the conditions on $r, s$ are as in the
appropriate line of Table~\ref{tbl:wm}. Finally we prove that $\ovGa$ is a weak $(\ov{m}, \ov{n})$-metacirculant.

 Consider first line 1 of Table~\ref{tbl:wm}.
Then $\ovG=\la \mu,\nu,\sigma\ra$ where $\mu, \nu, \sigma$, are as in Definition~\ref{def1}.
It is easy to check that $\ovGa$ is a weak $(s, r)$-metacirculant relative to $(\mu, \nu)$, so the assertions for
line 1 of Table~\ref{tbl:wm} all hold. The graph $\ovGa$ is the graph $X_o(s,r;1)$ defined in
\cite[Example 2.1]{MS} (although in \cite{MS} both of $r,s$ are assumed to be odd).
If indeed both of $r, s$ are odd then $\ovGa$, and hence also $\Ga$, is $\frac{1}{2}$-arc transitive
(see \cite[Theorem 4.1]{MS} and its proof).

Now consider line 2 of Table~\ref{tbl:wm}.
Then $\ovG=\la \mu^2, \mu\nu,\sigma\ra$ where again $\mu, \nu, \sigma,$ are as in Definition~\ref{def1}.
This time $\ovGa$ is a weak $(s, r/2)$-metacirculant relative to $(\mu^2, \mu\nu)$, so the assertions for
line 2 of Table~\ref{tbl:wm} all hold. The graph $\ovGa$ is the graph $X_e(s,r/2;1,0)$ defined in
\cite[Example 2.2]{MS} (although in \cite{MS}, $r/2$ is assumed to be even and at least $4$).
Thus Corollary~\ref{cor:wm} is proved.

\end{document}